\documentclass[11pt]{amsart}
\makeatletter
\@namedef{subjclassname@2020}{%
  \textup{2020} Mathematics Subject Classification}
\makeatother
\usepackage[utf8]{inputenc}
\usepackage[T1]{fontenc}
\usepackage{latexsym, amsmath, amssymb}
\usepackage{amsmath}
\usepackage{amsfonts}
\usepackage{amssymb}
\usepackage{amscd}
\usepackage{exscale}
\usepackage{relsize}
\usepackage{amsthm}
\usepackage{float}
\restylefloat{table}

\usepackage{verbatim}
\usepackage[dvipsnames]{xcolor}
\usepackage{xypic}
\usepackage{tikz}
\usetikzlibrary{matrix,shapes,backgrounds,fit}
\usepackage{tikz-cd}
\usepackage{latexsym}
\usepackage{todonotes}
\usepackage{mathrsfs}
\theoremstyle{plain}
\newtheorem*{theoremint}{Main Theorem}
\newtheorem{theorem}{Theorem}[section]
\newtheorem{proposition}[theorem]{Proposition}
\newtheorem{lemma}[theorem]{Lemma}
\newtheorem{corollary}[theorem]{Corollary}
\theoremstyle{definition}
\newtheorem{definition}[theorem]{Definition}

\newtheorem{remark}[theorem]{Remark}

\numberwithin{equation}{section}

\DeclareMathOperator{\Ext}{Ext}

\DeclareMathOperator{\rank}{rk}

\newcommand{\arr}{\longrightarrow}
\newcommand{\rk}{\mathrm{rank}}

\newcommand{\ppp}{\mathbb{P}^1\times\mathbb{P}^1\times\mathbb{P}^1}

\def\Ext{{\rm{Ext\,}}}

\newcommand{\zz}{{\mathbb Z}}

\newcommand{\pp}{{\mathbb P}}

\newcommand{\Pic}{\mathrm{Pic}}

\newcommand\sA{{\mathcal A}}

\newcommand\sD{{\mathcal D}}
\newcommand\sE{{\mathcal E}}
\newcommand\sF{{\mathcal F}}
\newcommand\sG{{\mathcal G}}
\newcommand\sH{{\mathcal H}}
\newcommand\sI{{\mathcal I}}

\newcommand\sL{{\mathcal L}}
\newcommand\sM{{\mathcal M}}
\newcommand\sN{{\mathcal N}}
\newcommand\sO{{\mathcal O}}

\newcommand\sU{{\mathcal U}}

\newcommand\sY{{\mathcal Y}}

\newcommand\fE{{\mathfrak E}}
\newcommand\fF{{\mathfrak F}}

\newcommand\fH{{\mathfrak H}}
\newcommand\fI{{\mathfrak I}}

\newcommand\fP{{\mathfrak P}}

\newcommand\fX{{\mathfrak X}}

\def\pee#1{\hbox{$ {\mathbb P}^{#1}$}}

  \def \tab#1{\kern #1 truein}
  {\left\lbrace\begin{array}{@{}l@{}}}%
  {\end{array}\right.}

\title[Stratification of moduli spaces of instantons via 't Hooft bundles]{Stratification of moduli spaces of instantons on the Segre product of three lines via 't Hooft bundles}
\author{V.Antonelli, F. Malaspina}
\address{Politecnico di Torino, Corso Duca degli Abruzzi 24, 10129 Torino, Italy}
\email{vincenzo.antonelli@polito.it}

\address{Politecnico di Torino, Corso Duca degli Abruzzi 24, 10129 Torino, Italy}
\email{francesco.malaspina@polito.it}

\thanks{Both the authors are members of GNSAGA group of INDAM. The first author acknowledges support from the GNSAGA project: ``Conic bundle structures and vector bundles in the birational geometry of Fano varieties" (CUP E53C25002010001)}
\keywords{Instanton bundles, 't Hooft bundles, moduli spaces, Fano threefolds}
\subjclass[2020]{Primary: 14J60. Secondary: 14F06, 14J45, 14D21}
\begin{document}
\begin{abstract}
Let $X$ be the Segre product of three projective lines. For a fixed effective divisor $D$ on $X$, we introduce the notions of $D$-'t Hooft, $(D_i,D_j)$-special and $D$-sectional special bundle. The varieties parameterizing these bundles yield a natural stratification of the moduli space of stable instanton bundles with fixed Chern classes. After characterizing the curves associated with these bundles via Serre correspondence, we describe the corresponding Hilbert schemes. Using this description, we analyze the moduli spaces of $h_i$-'t Hooft bundles and the smaller strata of $(h_i,h_j)$-special and $(h_i)$-sectional special bundles. Finally, we provide a detailed study of the low-charge cases. 
\end{abstract}
\maketitle

\section{Introduction}
We work over the field $\mathbb{C}$ of complex numbers. Let 
$X \subset \mathbb{P}^N$ be a smooth polarized projective variety of dimension $n$. In the study of the geometry of $X$ and of its 
subvarieties, vector bundles on $X$ have proved to be a powerful and 
flexible tool. It is therefore natural to focus on distinguished 
classes of vector bundles reflecting special geometric or cohomological 
features of the underlying variety.

Among such classes, instanton bundles have attracted sustained interest 
in algebraic geometry. Their origin lies in mathematical physics: they 
were first introduced on the projective space $\mathbb{P}^3$ as the 
algebraic counterparts of solutions of the Yang-Mills equations on the 
four-sphere $S^4$. In the original definition \cite{ADHM,AW}, an instanton bundle $\sE$ on 
$\mathbb{P}^3$ is a stable rank-two vector bundle with $c_1(\sE)=0$ and 
satisfying
\[
H^1(\sE(-2))=0.
\]
The second Chern class $c_2(\sE)$ is called the \emph{charge} of the 
instanton.

Motivated by the rich geometry of the moduli spaces of instantons on 
$\mathbb{P}^3$, this notion has been generalized to other projective 
varieties, with the aim of understanding vector bundles on threefolds 
and, more generally, on higher-dimensional varieties. Since 
$\mathbb{P}^3$ is the only Fano threefold of index $4$, it is natural to 
seek analogues on other Fano threefolds of Picard number one 
(cf. \cite{Fa2,Kuz}). The first definitions of instanton bundles on Fano 
varieties of higher Picard rank were proposed in 
\cite{MMP,AnMa}. Subsequently, the notion was extended to arbitrary 
polarized threefolds \cite{AnMa2} and even to arbitrary projective 
schemes \cite{AnCa}. Although the most general definitions no longer 
retain a direct physical interpretation, instanton bundles continue to 
play a central role in the study of moduli spaces of stable vector 
bundles.

Within the class of instanton bundles, the so-called 
\emph{’t Hooft instanton bundles} occupy a distinguished position. The 
terminology originates in the seminal work of ’t Hooft \cite{Hooft}, 
where explicit multi-instanton solutions of the anti-self-dual 
Yang-Mills equations were constructed by superposition of basic 
instantons. Via the Ward correspondence and the ADHM construction, these 
solutions correspond to algebraic instanton bundles $\sE$ on 
$\mathbb{P}^3$ such that the twist $\sE(1)$ admits a non-zero global 
section. On $\mathbb{P}^3$, ’t Hooft instanton bundles form some of the 
most explicit families of instantons: they admit monadic descriptions 
and determine irreducible components and boundary strata of the 
corresponding moduli spaces (cf. \cite{BeFr,BT,Ha2,HN}). Results on the total moduli space of instantons can be found in \cite{CO,JV,T1,T2}

These features suggest that suitable analogues of ’t Hooft instanton 
bundles should play a key role in the study of moduli spaces of stable 
bundles on other varieties. A first step in this direction was carried 
out in \cite{AMMP}, where the authors introduced the notion of 
$D$-’t Hooft bundles on the complete flag threefold 
$F$ of point-line in $\mathbb{P}^2$, and described their moduli 
spaces, revealing a behavior markedly different from the classical case 
of $\mathbb{P}^3$ (cf. \cite[Theorem B]{AMMP}).

Apart from $F$, there is only one other Fano threefold $X$ with 
$K_X=-2h$ and Picard rank greater than one, namely the Segre threefold $X = \mathbb{P}^1 \times \mathbb{P}^1 \times \mathbb{P}^1$
embedded in $\pp^7$ by the very ample line bundle $\sO_X(h)$. Instanton bundles on $X$ were first studied in \cite{AnMa}. Let us recall that an instanton bundle $\sE$ on $X$ is a $\mu$-semistable, rank two vector bundle with $c_1(\sE)=0$ satisfying $$h^0(\sE)=h^1(\sE(-h))=0.$$
The aim of this paper is to introduce and study ’t Hooft instanton 
bundles on $X$ and to analyze the geometry of their moduli spaces. The multigraded geometry and the higher Picard rank of $X$ lead to a 
richer notion of instanton and require new techniques for their 
construction and classification. Given an effective divisor $D$ on $X$, 
we define \emph{$D$-’t Hooft bundles} as instanton bundles $\sE$ such 
that $H^0(\sE(D)) \neq 0$. If $h_1,h_2,h_3$ denote the generators of 
$\Pic(X)$ given by the pullbacks of $\mathcal{O}_{\mathbb{P}^1}(1)$ from 
each factor, we focus in particular on $h_i$-’t Hooft bundles, which 
retain several key features of the classical case.

We first characterize the curves $Y_i$ that arise as zero loci of 
sections $s \in H^0(\sE(h_i))$ and describe their Hilbert schemes. We 
show that $h^0(\sE(h_i)) \leq 2$ for every instanton bundle $\sE$ on $X$, independently of the charge, and we introduce two different notions of speciality: 
$h_i$-sectional special bundles, for which $h^0(\sE(h_i))=2$, and 
$(h_i,h_j)$-special bundles, for which both $\sE(h_i)$ and $\sE(h_j)$ 
admit non-zero sections. We analyze how these properties are reflected, 
via Serre correspondence, in the geometry of the associated curves, and 
we study the corresponding strata inside the moduli space of instanton 
bundles. Let us indicate by $\sY$ the class $k_1h_2h_3+k_2h_1h_3+k_3h_1h_2$ in $A^2(X)$ and let us denote by $MI(\sY)$ the moduli spaces of instanton bundles having $c_2=\sY$, i.e. the open subset of $\mu$-stable instanton bundles inside the Maruyama moduli space $M_X(2;0,\sY)$ of rank two $\mu$-stable bundles with $c_1=0$ and $c_2=\sY$. Furthermore, we denote by $MI^{h_i}(\sY)$, $MI^{(h_i,h_j)}(\sY)$ and $MI^{(h_i,h_i)}(\sY)$ respectively the moduli space of $h_i$-'t Hooft, $(h_i,h_j)$-special and $h_i$-sectional special bundles. We collect the results from Theorem \ref{tModuliSectional}, \ref{tModuliSpecial} and \ref{tModulitHooft} in the following theorem.

\begin{theoremint}
The moduli space $MI^{(h_i,h_i)}(\sY)$ is smooth and irreducible of  dimension $2(k_j+k_w)+1$ with $i\neq j \neq w \neq i$ and it is non-empty if and only if $k_i=0$. The moduli space $MI^{(h_i,h_j)}(\sY)$ is irreducible and smooth outside of $h_i$-sectional special and $h_j$-sectional special bundles. The maximal stratum $MI^{h_i}(\sY)$ is smooth outside $MI^{(h_i,h_i)}$ and consists of 
\[
\mathlarger{\mathlarger{\sum}}_{{\substack{ 0\le \alpha \le k_j \\ 0 \le \beta \le k_w \\ 1 \le \alpha + \beta \le k_i}}}{\binom{k_i-1}{\alpha+\beta-1}}
\]
irreducible components if $k_i\neq 0$ and it is irreducible if $k_i=0$. Moreover the reducible union $MI^{h_i}(\sY) \cup MI^{h_j}(\sY)$ is also singular along $MI^{(h_i,h_j)}(\sY)$.
\end{theoremint}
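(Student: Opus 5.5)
The plan is to pass everything through the Serre correspondence for $\sE(h_i)$. A section $s\in H^0(\sE(h_i))$ of an instanton $\sE$ vanishes along a locally complete intersection curve $Y_i$ with $\omega_{Y_i}=\sO_{Y_i}(2h_i-2h)$. Its class is $[Y_i]=\sY+h_i^2=\sY$, since $h_i^2=0$. Because $\sE(h_i)\otimes\sO_{h_i}$ is $\sO\oplus\sO$ on a fibre $\pp^1\times\pp^1$ of the $i$-th projection, I expect the following description. $Y_i$ is a disjoint union of curves, each contained in a fibre $\{p\}\times\pp^1\times\pp^1$ of the $i$-th projection, each a rational curve of bidegree $(1,0)$ or $(0,1)$ in that fibre, and the instanton conditions force $h^0(\sO_{Y_i})$ to be controlled by $k_i$.

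First step: use Riemann–Roch on $\sE(h_i)$ and the vanishing $h^1(\sE(-h))=0$ to compute $\chi(\sE(h_i))$, and identify which configurations of lines give $\mu$-stable instantons. This yields the Hilbert scheme description. The curve $Y_i$ is a union of $k_i$ lines lying in distinct $h_i$-fibres; among them, $\alpha$ lines have class $h_ih_w$ and $\beta$ have class $h_ih_j$. Lines in the same fibre can collapse, which produces the non-reduced structures, and the degrees in the other directions are absorbed by $k_j,k_w$.

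Second step: study the sectional special stratum. I would show $h^0(\sE(h_i))\le 2$ by restricting to a general fibre. When $h^0=2$ the two sections give $\sO^2\to\sE(h_i)$ degenerating along a divisor of class $2h_i$, which forces $k_i=0$. One obtains an extension presentation in which $\sE(h_i)$ comes from a pencil. The stratum is then a Grassmannian-type bundle over a parameter space of dimension $2(k_j+k_w)+1$, and smoothness follows from $h^2(\sE\otimes\sE)=0$ restricted to the stratum. For the $(h_i,h_j)$-special stratum I would intersect the two Serre descriptions and get irreducibility from the irreducibility of the space of pairs of curves.

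Third step, for $MI^{h_i}(\sY)$: the dimension of the fibre of the map from pairs $(Y_i,\xi)$, with $\xi\in H^0(\omega_{Y_i}(2h-2h_i))$ generating, to $\sE$ is $h^0(\sE(h_i))-1$. This is $0$ off the sectional special locus. Hence $MI^{h_i}$ is birational to an open subset of the space of such pairs, and its components correspond to components of the Hilbert scheme. These are indexed by splitting types $(\alpha,\beta)$ with $\alpha\le k_j$, $\beta\le k_w$, $1\le\alpha+\beta\le k_i$, together with a choice of how the $k_i$ units of $h_i$-degree are distributed among the $\alpha+\beta$ connected pieces. The number of such distributions is the composition count $\binom{k_i-1}{\alpha+\beta-1}$, which gives the sum. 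Smoothness outside $MI^{(h_i,h_i)}$ comes from the vanishing of $\Ext^2(\sE,\sE)$ together with the unobstructedness of $Y_i$, namely $H^1(N_{Y_i})=0$ for a disjoint union of rational curves. The case $k_i=0$ gives a single type and hence irreducibility.

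The singularity of $MI^{h_i}\cup MI^{h_j}$ along the special stratum follows by computing tangent spaces. At a point of $MI^{(h_i,h_j)}$ both strata have tangent spaces, and I would show each has codimension at least one in the full $\Ext^1(\sE,\sE)$ while the two are not equal, so the union is not locally irreducible there.

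I expect the main obstacle to be the component count. It requires proving that different combinatorial types really lie in distinct irreducible components, with no type degenerating into the closure of another. I would try to detect this with an upper-semicontinuous invariant, the splitting of $\sE$ on generic $h_i$-fibres, combined with equality of dimensions between the types.
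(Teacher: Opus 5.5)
There is a genuine gap. Your overall architecture is the paper's: Serre correspondence for $\sE(h_i)$, a projective bundle of extensions over a Hilbert scheme $H$ of zero loci, fibre $\pp(H^0(\sE(h_i)))$ of the map to moduli, and components of $MI^{h_i}$ read off from those of $H$. But the input that feeds it, your description of $Y_i$, is wrong.

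The premise that $\sE(h_i)$ is trivial on an $h_i$-fibre fails as soon as $k_i>0$. Restricting $0\to\sO_X\to\sE(h_i)\to\sI_{Y_i|X}(2h_i)\to0$ to a general $Q\in|h_i|$ gives $0\to\sO_Q\to\sE(h_i)|_Q\to\sI_{Z|Q}\to0$, with $Z=Y_i\cap Q$ of length $\sY\cdot h_i=k_i$. Correspondingly, $Y_i$ is not a union of lines in $h_i$-fibres. Lines of class $e_j=h_ih_w$ or $e_w=h_ih_j$ have $h_i$-degree $0$, so such a union has class $\alpha e_j+\beta e_w\neq\sY$ whenever $k_i>0$. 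So your Step 1 contradicts $[Y_i]=\sY$. The family it describes has dimension $2(k_j+k_w)$ instead of $2k$ and only one configuration type, so the distribution of $h_i$-degree in your Step 3 has nothing to act on.

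The correct statement is the core of the paper's proof (Theorem \ref{tCurves}). The lci curves with $\omega_Y\cong\sO_Y(-2h_j-2h_w)$ and $h^0(\sO_Y(-h_j-h_w))=0$ are exactly the disjoint unions of primitive extensions of type $\sO_C$ of smooth rational curves $C$ of class $ae_i+e_j$ or $ae_i+e_w$, $a\ge0$. Such a $C$ lies in a fibre of $\pi_w$ (resp.\ $\pi_j$), not of $\pi_i$, has bidegree $(a,1)$ there, and is a line only for $a=0$. Proving this takes a real argument:
\begin{itemize}
\item the injection $\omega_C\hookrightarrow\omega_Y$ and the exact sequence for $\omega$ of a union force the support of each connected component to be a single rational curve of $(h_j+h_w)$-degree one;
\item Lemma \ref{lCanonicalMultiple} excludes multiple structures on such a curve joined to an $e_i$-line;
\item the $C^{(1)}$ cohomology argument gives quasi-primitivity;
\item \eqref{eGenusMultiple} forces type $\sO_C$.
\end{itemize}
Only then does $\sN_{Y|X}\cong\sO_Y\oplus\sO_Y(2h_i)$ make $H$ smooth of dimension $2k$. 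In the count, $\alpha$ and $\beta$ are the numbers of components of type $ae_i+e_j$, resp.\ $ae_i+e_w$, with $a\ge1$, not numbers of lines.

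Several secondary steps also fail as written.
\begin{itemize}
\item Smoothness of the strata does not follow from $\Ext^2(\sE,\sE)=0$. You do not prove that vanishing, and it would give smoothness of the ambient moduli space at $\sE$, not of a closed substratum. The paper gets smoothness, and the dimension $2(k_j+k_w)+1$ that you only assert, from explicit descriptions. For $MI^{h_i}$, $\fP$ is open in a projective bundle over the smooth $H$, and $\vartheta$ is injective off the sectional special locus. For $MI^{(h_i,h_i)}$, it uses a $\pp^{k_j+k_w}$ of extensions over line configurations on pairs of $h_i$-fibres parametrized over $|2h_i|\cong\pp^2$, modulo the $\pp^1$ of sections.
\item Your restriction idea does rescue the bound. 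Since $h^0(\sE)=0$, we have $h^0(\sE(h_i))\le h^0(\sE(h_i)|_Q)=1+h^0(\sI_{Z|Q})$. This gives $h^0(\sE(h_i))\le2$ and forces $k_i=0$ when equality holds.
\item Be careful with the composition count. Pieces of the same type are unordered in the Hilbert scheme, so components correspond to pairs of partitions. Hence $\binom{k_i-1}{\alpha+\beta-1}$ overcounts once two pieces of one type get different positive degrees. For $\sY=3e_1+2e_2$ and $i=1$, the only configurations are $\{3e_1+e_2,\,e_2\}$ and $\{2e_1+e_2,\,e_1+e_2\}$. So $H$ and $MI^{h_1}(\sY)$ have two components, while the sum equals $3$. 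The paper's formula has the same feature, so your composition count does not justify it.
\end{itemize}
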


We conclude by outlining the structure of the paper. 
In Section 2 we describe certain distinguished surfaces and curves in 
$X$. In Section 3 we recall the definition of instanton bundles and 
collect auxiliary results on vector bundles. In Section 4 we study 
$h_i$-’t Hooft bundles, their associated curves, and the relevant 
Hilbert schemes. In Section 5 we introduce and analyze the two notions 
of speciality. In Section 6 we describe the moduli spaces of 
$h_i$-’t Hooft, $(h_i,h_j)$-special, and $h_i$-sectional special 
bundles, discussing smoothness and irreducible components. Finally, in 
Section 7 we treat the low-charge cases, in which $h_i$-’t Hooft 
bundles exhaust the full moduli space of instanton bundles.

\section{Some geometry of $\ppp$}
Let $V_1, V_2, V_3$ be three $2$-dimensional complex vector spaces with coordinates $[x_{1i}], [x_{2j}], [x_{3w}]$ respectively with $i,j,w\in \{1,2\}$. Let $X\cong \mathbb P (V_1) \times \mathbb P (V_2) \times \mathbb P (V_3)$ and then it is embedded into $\mathbb P^7\cong \mathbb P(V)$ by the Segre map where $V=V_1 \otimes V_2 \otimes V_3$.

The intersection ring $A(X)$ is isomorphic to $A(\mathbb P^1) \otimes A(\mathbb P^1) \otimes A(\mathbb P^1)$ and so we have
$$A(X) \cong \mathbb Z[h_1, h_2, h_3]/(h_1^2, h_2^2, h_3^2).$$
We may identify $A^1(X)\cong \mathbb Z^{\oplus 3}$ by $a_1h_1+a_2h_2+a_3h_3 \mapsto (a_1, a_2, a_3)$. Similarly we have $A^2(X) \cong \mathbb Z^{\oplus 3}$ by $k_1e_1+k_2e_2+k_3e_3\mapsto (k_1, k_2, k_3)$ where $e_1=h_2h_3, e_2=h_1h_3, e_3=h_1h_2$ and $A^3(X) \cong \mathbb Z$ by $ch_1h_2h_3 \mapsto c$.
Then $X$ is embedded into $\mathbb P^7$ by the complete linear system $h=h_1+h_2+h_3$ as a smooth subvariety of degree $6$. 

We will denote by $\pi_i:X\to \mathbb{P}(V_i)$ the projection onto the $i$-th factor and by $\pi_{ij}:X\to \mathbb{P}(V_i)\times \mathbb{P}(V_j)$ the projection onto the $(i,j)$-th quadric.

Now we describe some notable surfaces contained in $X$.

\begin{lemma}\label{lQuadrics}
Let $S$ be a surface in the linear system $|h_i|$. Then $S$ is isomorphic to the polarized surface $(\pp^1\times\pp^1,\sO_{\pp^1\times\pp^1}(1,1))$. Moreover, the restriction map
\[
\phi:\Pic(X)\cong \zz^{\oplus3}\langle h_1,h_2,h_3\rangle\to \Pic(S)\cong\zz^{\oplus 2} \langle l,m\rangle
\]
is completely determined by 
\[
\phi(h_i)=0, \quad \phi(h_j)=l \quad \text{and} \quad \phi(h_k)=m,
\]
with $i\neq j \neq k \neq i$.
\end{lemma}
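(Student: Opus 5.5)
The plan is to identify $S$ explicitly as a fibre of the projection $\pi_i$ and then compute restrictions of the pullback line bundles directly.

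First I describe the linear system. Since $\sO_X(h_i)=\pi_i^*\sO_{\pp(V_i)}(1)$, the Künneth formula gives
\[
H^0(\sO_X(h_i))\cong H^0(\sO_{\pp^1}(1))\otimes H^0(\sO_{\pp^1})\otimes H^0(\sO_{\pp^1})\cong V_i^\vee .
\]
So every section is the pullback of a linear form $a x_{i1}+b x_{i2}$ on $\pp(V_i)$. Such a form vanishes at exactly one point $p=[-b:a]\in\pp(V_i)$, with multiplicity one. Hence every $S\in|h_i|$ is a scheme-theoretic fibre $\pi_i^{-1}(p)=\{p\}\times\pp(V_j)\times\pp(V_k)$. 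In particular $S$ is reduced and smooth, and the projection $\pi_{jk}$ restricts to an isomorphism $S\cong\pp^1\times\pp^1$.

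Next I compute the restriction map. Set $l=\phi(h_j)$ and $m=\phi(h_k)$, the two rulings of $\pp(V_j)\times\pp(V_k)$. The composition $\pi_j\circ\iota_S$ equals $\mathrm{pr}_1\circ(\pi_{jk}|_S)$, so
\[
\sO_X(h_j)|_S=\mathrm{pr}_1^*\sO(1)=\sO(1,0)=l,
\]
and in the same way $\sO_X(h_k)|_S=\sO(0,1)=m$. Meanwhile $\pi_i\circ\iota_S$ is constant, so $\sO_X(h_i)|_S$ is the pullback of a line bundle from a point, which is trivial; hence $\phi(h_i)=0$. This agrees with $h_i^2=0$ in $A(X)$. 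Since $\Pic(X)$ is generated by $h_1,h_2,h_3$, the map $\phi$ is determined by these three values.

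Finally, the polarization is $\sO_X(h)|_S=0+l+m=\sO(1,1)$, which gives $(S,\sO_S(h))\cong(\pp^1\times\pp^1,\sO(1,1))$. Under the Segre embedding this is a smooth quadric in a $\pp^3\subset\pp^7$.

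The only step needing care is the first: showing that every member of $|h_i|$ is a reduced fibre. This follows from $h^0(\sO_X(h_i))=2$ and the fact that a linear form on $\pp^1$ has a single simple zero. No other step presents an obstacle.
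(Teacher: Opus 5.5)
Your proof is correct, but it identifies $S$ by a different route than the paper.

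The paper's proof is a degree argument: $h_ih^2=2$, so $S$ is a surface of degree $2$ in $\pp^7$, i.e.\ a quadric. It then gets $\phi$ by restricting the three generators and using linearity.

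You instead use $H^0(\sO_X(h_i))\cong V_i^\vee$ to show that every member of $|h_i|$ is a scheme-theoretic fibre $\pi_i^{-1}(p)$. This gives $S\cong\pp^1\times\pp^1$ immediately. You then compute $\phi$ by functoriality of pullback along $\pi_j\circ\iota_S$, along $\pi_k\circ\iota_S$, and along the constant map $\pi_i\circ\iota_S$.

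The degree argument is shorter and ties directly to the polarization. On its own, however, it only shows that $S$ is a degree-$2$ surface. A further remark is needed to see that every member of $|h_i|$ is a \emph{smooth} quadric rather than, say, a cone. Your fibre description gives this for every member at once, not just the general one. It also makes $\phi(h_i)=0$ and the identification $\sO_S(h)\cong\sO(1,1)$ transparent, without appeal to intersection numbers.
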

\begin{proof}
It is straightforward since the intersection product between $h_i$ and $h^2$ is $h_ih=2$. The final part concerning the Picard groups can be obtained by restricting each generator of $\Pic(X)$ to $S$ and by the linearity of $\phi$.
\end{proof}
Notice that two different quadrics in the same linear system are always disjoint, while quadrics in different linear systems intersect along a line.
\begin{lemma}\label{lQuartics}
Let $S$ be a surface in the linear system $|h_i+h_j|$ with $i\neq j$. Then one of the following cases holds:
\begin{itemize}
\item $S$ is smooth and irreducible and is isomorphic to $(\pp^1\times\pp^1,\sO_{\pp^1\times \pp^1}(2\ell+m))$;
\item $S$ is the reducible union of two quadrics $Q_i$ and $Q_j$ in the linear systems $|h_i|$ and $|h_j|$.
\end{itemize}
Moreover, in the irreducible case the restriction map
\[
\phi:\Pic(X)\cong \zz^{\oplus3}\langle h_1,h_2,h_3\rangle\to \Pic(S)\cong\zz^{\oplus 2} \langle l,m\rangle
\]
is completely determined by
\[
\phi(h_i)=l \quad \phi(h_j)=l \quad \text{and} \quad \phi(h_w)=m
\]
with $i \neq j \neq w \neq i$.
\end{lemma}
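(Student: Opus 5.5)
A divisor $S\in|h_i+h_j|$ is the pullback under $\pi_{ij}$ of a curve $C\subset \pp(V_i)\times\pp(V_j)$ of bidegree $(1,1)$. Indeed, $h_i+h_j$ is the pullback of $\sO(1,1)$ from the quadric. By the Künneth formula,
\[
H^0(\sO_X(h_i+h_j))=V_i^*\otimes V_j^*\otimes H^0(\sO_{\pp(V_w)})=H^0(\sO_{\pp(V_i)\times\pp(V_j)}(1,1)),
\]
so every section comes from the quadric. Hence $S=\pi_{ij}^{-1}(C)\cong C\times\pp(V_w)$. The proof therefore reduces to classifying $(1,1)$-curves on $\pp^1\times\pp^1$.

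The first step is that classification. A $(1,1)$-form is a $2\times 2$ matrix (a bilinear form on $V_i\times V_j$). If it has rank $2$, then $C$ is a smooth conic, namely the graph of an isomorphism $\pp(V_i)\to\pp(V_j)$, so $C\cong\pp^1$. If it has rank $1$, the form factors as $\ell_i\cdot\ell_j$, and $C$ is the union of a fibre of each ruling. In the rank-$1$ case the pullback is $S=Q_i\cup Q_j$ with $Q_i=\pi_i^{-1}(pt)\in|h_i|$ and $Q_j\in|h_j|$, which gives the second alternative. In the rank-$2$ case $S\cong C\times\pp(V_w)\cong\pp^1\times\pp^1$ is smooth and irreducible.

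The second step computes the restriction map in the irreducible case. Let $l$ be the class of the $\pp(V_w)$-fibre direction of $S=C\times\pp(V_w)$, i.e. $l=[pt\times \pp(V_w)]$, and let $m=[C\times pt]$. The restriction of $h_i$ is $\pi_i^*\sO(1)$ restricted to $S$, which is the pullback of a degree-one divisor on $C$ through the isomorphism $C\to\pp(V_i)$, giving $\phi(h_i)=l$. The same argument with the isomorphism $C\to\pp(V_j)$ gives $\phi(h_j)=l$. Finally, $h_w$ pulls back from the second factor, so $\phi(h_w)=m$. Linearity gives the whole map.

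The third step identifies the polarization: $\phi(h)=2l+m$, which matches $\sO_{\pp^1\times\pp^1}(2\ell+m)$ in the statement. As a check, $h^2\cdot(h_i+h_j)=4=(2l+m)^2$, consistent with $S$ being a quartic.

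The only delicate point is the naming convention for $l$ and $m$, so that the polarization reads $2\ell+m$. This just requires choosing $l$ as the ruling class meeting $C\times pt$ once. Apart from this, everything is routine once the reduction to $(1,1)$-curves on the quadric has been made.
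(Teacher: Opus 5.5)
Your proof is correct and follows essentially the same route as the paper: both write $S=\pi_{ij}^{-1}(C)\cong C\times\pp(V_w)$ for a conic $C$ on $\pp(V_i)\times\pp(V_j)$ and read off the restriction map. You additionally justify, via K\"unneth and the rank of the $(1,1)$-form, that every member of $|h_i+h_j|$ is such a pullback and that the degenerate case is exactly $Q_i\cup Q_j$, points the paper leaves implicit, and you compute $\phi$ directly on each $h_\bullet$ instead of deducing it from $\phi(h)=2l+m$.
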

\begin{proof}
We only prove the irreducible case, since the reducible one is a direct consequence of Lemma \ref{lQuadrics}. Without loss of generality, suppose that $S\in|h_1+h_2|$ is irreducible. Then $S$ is the pullback of a smooth conic $C$ in $\pp(V_1)\times \pp(V_2)$ via the projection $\pi_{12}$. In particular $S$ is isomorphic to the product $C\times\pp(V_3)$, thus $S\cong (\pp^1\times\pp^1,\sO_{\pp^1\times \pp^1}(2\ell+m))$. By computing the intersections $h_i(h_1+h_2)$, the final part of the statement follows directly from the equality $\phi(h)=2l+m$ and the linearity of $\phi$.
\end{proof}

\begin{lemma}\label{lRationalCurvesGeneral}
Every class $a_1e_1+a_2e_2+a_3e_3$ in $A^2(X)$ has a representative which is a smooth rational curve.
\end{lemma}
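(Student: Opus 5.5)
The plan is to construct the curve as the image of an explicit morphism $\mathbb{P}^1\to X$. Since $A^2(X)$ is spanned by $e_1,e_2,e_3$ and $h_i\cdot e_j=\delta_{ij}$, an irreducible curve $C\subset X$ has class
\[
(h_1\cdot C)e_1+(h_2\cdot C)e_2+(h_3\cdot C)e_3 .
\]
So it suffices to find a morphism $f=(f_1,f_2,f_3)\colon \mathbb{P}^1\to X$ with $\deg f_i=a_i$ that is a closed embedding. Necessarily $a_i\ge 0$ and the class is nonzero. Note that $\deg f_i = h_i\cdot f_*[\mathbb{P}^1]$.

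First I would check which classes the statement can actually cover.
\begin{itemize}
\item If $a_2=a_3=0$, then $C$ lies in a fibre of $\pi_{23}$, which is a line, so $C$ is smooth only when $a_1=1$.
\item If $a_3=0$, then $C$ lies in a fibre $\cong\mathbb{P}^1\times\mathbb{P}^1$ of $\pi_3$ as a curve of bidegree $(a_2,a_1)$. Its arithmetic genus is $(a_1-1)(a_2-1)$, so a smooth rational representative forces $a_1=1$ or $a_2=1$.
\end{itemize}
I therefore expect the lemma to be meant, and proved, for effective classes satisfying one of two conditions: either all $a_i\ge1$, or the nonzero coefficients include a $1$. In the proof I would make this hypothesis explicit.

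In the degenerate cases the construction is immediate:
\begin{itemize}
\item a single coefficient equal to $1$ gives a line $\pi_{jk}^{-1}(pt)$;
\item two nonzero coefficients, one of them equal to $1$, give the graph of a degree-$a$ map $\mathbb{P}^1\to\mathbb{P}^1$ inside a fibre of $\pi_k$. This graph is smooth, rational and of the right bidegree.
\end{itemize}
Likewise, if some $a_i=1$, the map $f$ is an embedding simply because $f_i$ is an isomorphism, so $f$ is a section of $\pi_i$ restricted to its image.

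The main case is $a_1,a_2,a_3\ge1$, all possibly $\ge2$. I would choose the $f_i$ generically in three steps.
\begin{enumerate}
\item Choose $f_1,f_2$ generic of degrees $a_1,a_2$ with disjoint ramification loci, which is possible since each has finitely many ramification points and we may compose with automorphisms. Then $(f_1,f_2)$ is an immersion.
\item Its image in $\mathbb{P}^1\times\mathbb{P}^1$ is a curve of arithmetic genus $(a_1-1)(a_2-1)$. Hence $(f_1,f_2)$ fails to be injective only on a finite set of pairs $\{p,q\}$, $p\neq q$.
\item Choose $f_3$ of degree $a_3\ge1$ with $f_3(p)\ne f_3(q)$ for each of these finitely many pairs. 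For a fixed pair this is an open dense condition on the space of degree-$a_3$ maps: compose any $f_3$ with an automorphism of $\mathbb{P}^1$ moving $f_3(p)$ away from $f_3(q)$ when these differ, and if they coincide perturb $f_3$.
\end{enumerate}
Then $f$ is an injective immersion of a proper curve, hence a closed embedding, and its image is a smooth rational curve of class $a_1e_1+a_2e_2+a_3e_3$.

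The step needing the most care is making the genericity argument precise: the simultaneous choice of $f_1,f_2$ with disjoint ramification, and the separation by $f_3$. I would do this by the dimension count on the space of triples of binary forms sketched above. Alternatively, I would argue that the set of such embeddings is open in the irreducible space of morphisms of multidegree $(a_1,a_2,a_3)$ and is nonempty by the explicit construction.
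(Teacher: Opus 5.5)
Your construction is the paper's own: take $\varphi=(\varphi_1,\varphi_2,\varphi_3)$ with $\varphi_i$ a general map of degree $a_i$. You are also right to restrict the statement: non-effective classes, $ae_i$ with $a\ge 2$, and $a_ie_i+a_je_j$ with $a_i,a_j\ge 2$ have no smooth rational representative, and the paper's one-line proof fails exactly there (it yields a multiple cover of a line, resp.\ a singular image of arithmetic genus $(a_i-1)(a_j-1)$ on a quadric $\mathbb{P}^1\times\mathbb{P}^1$). Your genericity argument supplies the details the paper omits; the one implicit step is that $(f_1,f_2)$ is birational onto its image, which the finiteness in your step 2 needs and which holds because an unramified map from $\mathbb{P}^1$ cannot factor through a finite map of degree $\ge 2$ (Riemann--Hurwitz).
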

\begin{proof}
It is enough to consider $\varphi_i:\pp^1\to\pp^1$ a general morphism of degree $a_i$. The map $\varphi=(\varphi_1,\varphi_2,\varphi_3):\pp^1\to X$ gives a smooth rational curve representing the class $a_1e_1+a_2e_2+a_3e_3$.
\end{proof}
\begin{proposition}\label{pCompleteIntersection}
Let $C\subset X$ be a reduced connected curve representing the class $ae_1+e_2$ (resp. $ae_1+e_3$) with $a \ge 0$. Then $C$ is a complete intersection curve of type $(h_3,ah_2+h_1)$ (resp. $(h_2,ah_3+h_1)$) and arithmetic genus $p_a(C)=0$. Moreover, if $C$ is an integral curve, then $C\cong \pp^1$ and its normal sheaf is $\sN_{C|X}\cong \sO_{\pp^1}\oplus \sO_{\pp^1}(2a)$.
\end{proposition}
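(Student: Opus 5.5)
Take the class $ae_1+e_2 = a\,h_2h_3 + h_1h_3$; the other case follows by exchanging $h_2$ and $h_3$.

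\emph{Step 1: $C$ lies in a quadric of $|h_3|$.} Intersecting with the divisors gives
\[
C\cdot h_1=a,\qquad C\cdot h_2=1,\qquad C\cdot h_3=0 .
\]
Since $C\cdot h_3=0$, each irreducible component of $C$ has degree $0$ under $\pi_3$, so it is contracted by $\pi_3$ to a point. Because $C$ is connected, all components go to the same point. Hence $C\subset Q:=\pi_3^{-1}(p)\in|h_3|$.

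\emph{Step 2: identify the divisor class of $C$ in $Q$.} By Lemma \ref{lQuadrics}, $Q\cong\pp^1\times\pp^1$, and $h_1,h_2$ restrict to the two rulings, call them $l$ and $m$. The intersection numbers from Step 1 say that, as a divisor on $Q$, $C$ has $C\cdot l=a$ and $C\cdot m=1$. So $C$ is an effective divisor of bidegree $(1,a)$, i.e. $C\in|l+am|$ with $l,m$ the restrictions of $h_1,h_2$. The restriction map
\[
H^0(\sO_X(h_1+ah_2))\to H^0(\sO_Q(h_1+ah_2))
\]
is surjective: its cokernel injects into $H^1(\sO_X(h_1+ah_2-h_3))=0$ by Künneth, since $H^1(\sO_{\pp^1}(-1))=H^0(\sO_{\pp^1}(-1))=0$. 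Therefore $C$ is cut out on $Q$ by a divisor of $|h_1+ah_2|$. Since $C$ is reduced and is a Cartier divisor on $Q$, this shows $C=Q\cap D$ scheme-theoretically, a complete intersection of type $(h_3,ah_2+h_1)$.

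\emph{Step 3: arithmetic genus.} Adjunction on $Q\cong\pp^1\times\pp^1$ gives
\[
p_a=(1-1)(a-1)=0 .
\]
Alternatively, from the Koszul resolution, $\chi(\sO_C)=1$ by Künneth.

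\emph{Step 4: the integral case.} If $C$ is integral, then $p_a=0$ forces $C\cong\pp^1$. The normal bundle of a complete intersection splits:
\[
\sN_{C|X}\cong \sO_C(h_3)\oplus\sO_C(h_1+ah_2).
\]
The first summand is trivial since $h_3\cdot C=0$. The second has degree $a+a=2a$ since $h_1\cdot C=a$ and $h_2\cdot C=1$. This gives $\sO_{\pp^1}\oplus\sO_{\pp^1}(2a)$.

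The only delicate point is the scheme-theoretic claim in Step 2: reducedness is what lets $C$ be treated as a Cartier divisor on the smooth surface $Q$. Step 1 also uses connectedness essentially, since without it the components could lie in different fibres of $\pi_3$.
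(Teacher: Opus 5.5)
Your proposal is correct and follows essentially the same route as the paper. Both arguments go through containment of $C$ in a quadric $Q\in|h_3|$, identify $C$ as a divisor of class $l+am$ on $Q$ (adjunction then gives $p_a=0$), lift its equation using $H^1(\sO_X(h_1+ah_2-h_3))=0$, and read off the normal bundle of the resulting complete intersection; your Step 1 (using $C\cdot h_3=0$ together with connectedness) and your remark on reducedness usefully justify points the paper only asserts.
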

\begin{proof}
We only prove the statement for curves representing $ae_1+e_2$, the other case being completely analogous. Let $C\subset X$ be a curve as in the hypothesis. Then $C$ projects through $\pi_3$ to a point in $\pp(V_3)$, thus $C$ is contained in a quadric $Q\in|h_3|$. Denote by $(\ell,m)$ the two generators of $\Pic(Q)$. Thanks to Lemma \ref{lQuadrics}, $C$ represents the class $\ell+am$ in $A^1(Q)$ and the adjunction formula directly yields $p_a(C)=0$. To conclude, consider the exact sequence:
\[
0\to\sO_X\bigr(h_1+ah_2-h_3\bigl)\to\sO_X\bigr(h_1+ah_2\bigl)\xrightarrow{\phi}\sO_Q(C)\to 0.
\]
Since $h^1(\sO_X(h_1+ah_2-h_3))=0$ for $a\geq 0$, the induced map $H^0(\phi)$ is surjective and therefore any curve in the linear system $|C|$ is a complete intersection. The statement about the normal bundle now follows directly. Notice also that $\omega_C\cong\sO_C(-2h_2-2h_3)$ in the integral case.
\end{proof}

Let $C$ be a complete, smooth, and rational intersection curve as described in Proposition \ref{pCompleteIntersection}. We are now interested in the structure of non-reduced curves supported on $C$. We start by describing Cohen-Macaulay double structures on $C$, which are all obtained by the Ferrand doubling technique. We will use \cite{BF,Ma} as general references. A similar study of multiple structures supported on rational curves can be found in \cite[Section 4]{AMMP} and we refer the interested reader there for complete details.

 Let us denote by $\nu_X$ the conormal bundle of the variety $X$. For a smooth rational curve $C$ on $F$, by Proposition \ref{pCompleteIntersection} we have $\nu_C \cong \sO_{\pp^1} \oplus \sO_{\pp^1}(-2a)$. Every double structure $Y_1$ on $C$ arises from a surjective map
\[
\nu_C \xrightarrow{\phi} \mathcal{L} \to 0
\]
where $\sL$ is a line bundle on $C$. Since $C$ is rational, we get $\sL \cong \sO_{\pp^1}(\alpha)$. Notice that since $\phi$ is surjective, it forces the inequality $\alpha \ge -1$. Moreover we have the following short exact sequence
\begin{equation}\label{eSeqNilpotent}
0 \rightarrow \frac{\sI_{Y_1}}{\sI_C^2} \rightarrow \frac{\sI_{C}}{\sI_C^2} \xrightarrow{\phi} \sL\cong \frac{\sI_C}{\sI_{Y_1}} \rightarrow 0.
\end{equation}

In order to study higher multiplicity extensions, let us start by focusing on the Cohen-Macaulay extensions $Y$ which are locally contained in a smooth surface. These are called \textit{primitive extensions} of $C$, according to the following definition. 

\begin{definition}\label{dPrimitive}
Let $C$ be a smooth integral curve. A \textit{primitive extension} of $C$ is a Cohen-Macaulay curve $Y$ such that $Y_{red}\cong C$ and such that $Y$ can be locally embedded in a smooth surface. Associated to $Y$ there is a canonical filtration 
\begin{equation}\label{eFiltration}
C=Y_0\subset Y_1\subset\dots Y_k=Y 
\end{equation}
where $Y_j = Y \cap C^{(j)}$ and $C^{(j)}$ is the $j$-th infinitesimal neighborhood of $C$. The integer $k+1$ is the multiplicity of $Y$. In this situation, $\sL:=\sI_{C\mid Y_1}$ is a line bundle on $C$. It is called \textit{the type} of $Y$.
\end{definition}

Let us describe primitive extensions of multiplicity $k+1$ of type $\sL$. For $j=1,\ldots,k$, we have exact sequences
\begin{equation}\label{strSheaf}
0 \rightarrow \sL ^j \rightarrow \sO_{Y_j} \rightarrow \sO_{Y_{j-1}} \rightarrow 0.
\end{equation}
Moreover we also have the exact sequence
$$
0 \rightarrow \sL^{k}\to \frac{\sI_{Y}}{\sI_C \sI_Y} \rightarrow \frac{\sI_{C}}{\sI_C^2} \rightarrow \sL \rightarrow 0
$$
and in particular $\omega_{Y|C}\cong\omega_C \otimes \sL^{-k}$. Thus in order to effectively compute the canonical sheaf of $Y$, it is essential to understand the behavior of the restriction map $\Pic(Y) \to \Pic(C)$. The following paragraphs deal with these issues for primitive extensions of rational curves $C$ of type $\sO_C$ which will be related to instanton bundles.

\begin{lemma}\cite[Lemma 4.5]{AMMP}
\label{lPicMultiple}
Let $C\subset X$ be a rational curve and let $Y$ be a primitive extension of $C$ of type $\sO_C$. Then the restriction map $\Pic(Y) \to \Pic(C)$ is an isomorphism.
\end{lemma}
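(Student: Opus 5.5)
We argue by induction along the canonical filtration \eqref{eFiltration}
\[
C=Y_0\subset Y_1\subset\dots\subset Y_k=Y,
\]
showing that each restriction $\Pic(Y_j)\to\Pic(Y_{j-1})$ is an isomorphism. Composing these gives $\Pic(Y)\cong\Pic(C)$.

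The basic tool is the sequence \eqref{strSheaf} with $\sL\cong\sO_C$:
\[
0\to \sO_C \to \sO_{Y_j}\to \sO_{Y_{j-1}}\to 0.
\]
The kernel $\sI_{Y_{j-1}|Y_j}\cong\sL^j$ is a square-zero ideal in $\sO_{Y_j}$, because $\sI_{Y_{j-1}}^2\subset \sI_C^{2j}\subset\sI_{Y_j}$ locally (for primitive extensions, $Y_j$ is locally defined by $(x,y^{j+1})$ in a smooth surface germ with $C=\{y=0\}$). So the truncated exponential $f\mapsto 1+f$ gives an exact sequence of abelian sheaves
\[
0\to \sO_C\to \sO_{Y_j}^{*}\to \sO_{Y_{j-1}}^{*}\to 1,
\]
where the first map is an injective group homomorphism, since $(1+f)(1+g)=1+f+g$. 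Surjectivity on units holds because any lift of a unit is a unit, the kernel being nilpotent.

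Passing to cohomology gives
\[
H^1(C,\sO_C)\to \Pic(Y_j)\to\Pic(Y_{j-1})\to H^2(C,\sO_C).
\]
Since $C\cong\pp^1$, both outer groups vanish: $H^1(\sO_{\pp^1})=0$, and $H^2=0$ because $\dim C=1$. Hence $\Pic(Y_j)\to\Pic(Y_{j-1})$ is an isomorphism. Here we use that all the $Y_j$ share the topological space $C$, so every cohomology group is computed on the same space.

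The only delicate point is the square-zero property and the identification of the kernel with $\sL^j\cong\sO_C$ as an $\sO_{Y_j}$-module, which requires $\sI_C$ to act trivially on it. I would verify this in the local model $(x,y^{j+1})$, where the kernel is generated by $y^j$ and $\sI_C\cdot y^j\subset(x,y^{j+1})$. Everything else is formal.
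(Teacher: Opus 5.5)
Your argument is correct and is essentially the standard proof behind the cited \cite[Lemma 4.5]{AMMP}: you induct along the filtration, use the truncated exponential sequence $0\to\sL^j\cong\sO_C\to\sO_{Y_j}^{*}\to\sO_{Y_{j-1}}^{*}\to 1$, and conclude from the vanishing of $H^1(\sO_{\pp^1})$ and of $H^2$ on a one-dimensional space. One small slip: as ideals of $\sO_X$, the inclusion $\sI_{Y_{j-1}}^2\subset\sI_C^{2j}$ fails for $j\ge 2$, because the local equation $x$ of the surface contributes $x^2$; what you actually need is only $\sI_{Y_{j-1}}^2\subset\sI_{Y_j}$, i.e.\ $(x,y^j)^2\subset(x,y^{j+1})$, and your local-model check does give exactly this.
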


As a straightforward consequence of the above lemma, we infer that  the restriction $\omega_{Y|C}$ completely determines the canonical sheaf $\omega_Y$ in the case of primitive extensions of type $\sO_C$.
 
Now we will explicitly describe the normal bundle of a primitive extension $Y$ of type $\sO_C$ and multiplicity $k+1$ with support a rational smooth curve described in Proposition \ref{pCompleteIntersection}. In order to do so, following the notation introduced before, we recall the following two short exact sequences (we will simply use the notation $\sI_Y$ when considering the inclusion $Y\subset X$):  
\begin{gather}\label{eIdealMult}
0 \to  \frac{\sI_C^{k+1}}{\sI_{Y_1}\sI_{C}^k} \to \frac{\sI_Y}{\sI_C \sI_Y} \to \frac{\sI_{Y_1}}{\sI_C^2} \to 0,  \notag \\ 
\\ 
0 \to \frac{\sI_{Y_1}}{\sI_C^2} \to \frac{\sI_C}{\sI_C^2}\to \frac{\sI_C}{\sI_{Y_1}} \to 0. \notag
\end{gather}
Replacing the entry of the second short exact sequence according to the known isomorphisms, we get 
\begin{equation}\label{mult2}
0 \rightarrow \frac{\sI_{Y_1}}{\sI_C^2} \rightarrow \sO_{\pp^1}\oplus \sO_{\pp^1}(-2a) \rightarrow \sO_C \rightarrow 0.
\end{equation}

This implies that
$$
\frac{\sI_{Y_1}}{\sI_C^2} \simeq \sO_{\pp^1}(-2a)
$$
and 
$$
\frac{\sI_Y}{\sI_C \sI_Y} \simeq \frac{\sI_{Y_1}}{\sI_C^2} \oplus \frac{\sI_C^{k+1}}{\sI_{Y_1}\sI_{C}^k} \simeq \sO_{\pp^1}(-2a) \oplus \sO_{\pp^1}.
$$

This means that the restriction of the conormal bundle $\sN^{\vee}_{Y}$ to the curve $C$ is isomorphic to $\sO_{\pp^1}(-2a) \oplus \sO_{\pp^1}$, or, equivalently,
\begin{equation}
\label{eDetNormalMultiple}
\sN_{Y|X}  \otimes \sO_C\cong \sO_{\pp^1} \oplus \sO_{\pp^1}(2a).
\end{equation}
In order to determine $\sN_{Y|X}$, we can strengthen Lemma \ref{lPicMultiple}  by means of the following Proposition. 

\begin{proposition}\label{pSplitMultiple}
Let $C$ be a smooth curve satisfying the hypotheses of Proposition \ref{pCompleteIntersection} and let $Y$ be a  primitive extension of multiplicity $k+1$ and type $\sO_C$. Then any locally free sheaf on $Y$ splits.
\end{proposition}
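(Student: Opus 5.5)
We argue by induction on the multiplicity $k+1$, using the filtration \eqref{eFiltration} and the exact sequences \eqref{strSheaf}, which for type $\sL\cong\sO_C$ read
\[
0\to \sO_C \to \sO_{Y_j}\to \sO_{Y_{j-1}}\to 0,\qquad j=1,\dots,k .
\]
The base case $j=0$ is Grothendieck's theorem: $Y_0=C\cong\pp^1$ by Proposition \ref{pCompleteIntersection}, so every locally free sheaf on $C$ is a direct sum of line bundles. By Lemma \ref{lPicMultiple} (applied to each $Y_j$, itself a primitive extension of type $\sO_C$), every line bundle on $Y_j$ is uniquely determined by its restriction to $C$. Write $\sO_{Y_j}(a)$ for the line bundle restricting to $\sO_{\pp^1}(a)$. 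Concretely one may take $\sO_{Y_j}(a)=\sO_X(aD)_{|Y_j}$ for a suitable divisor $D$ of degree one on $C$, e.g.\ $D=h_2$ since $C\cdot h_2=1$ for $C$ of class $ae_1+e_2$.

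For the inductive step, let $\sF$ be locally free of rank $r$ on $Y_j$ and suppose $\sF_{|Y_{j-1}}\cong\bigoplus_i\sO_{Y_{j-1}}(a_i)$, ordered with $a_1\ge\dots\ge a_r$. Tensoring the sequence above by $\sF$ gives
\[
0\to \bigoplus_i \sO_C(a_i)\to \sF\to \bigoplus_i\sO_{Y_{j-1}}(a_i)\to 0 .
\]
Consider $\sG=\bigoplus_i\sO_{Y_j}(a_i)$, which has the same restriction to $Y_{j-1}$. The aim is to lift the isomorphism $\sF_{|Y_{j-1}}\cong\sG_{|Y_{j-1}}$ to a morphism $\sG\to\sF$. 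Such a morphism is an isomorphism, since it is an isomorphism modulo the nilpotent ideal and both sheaves are locally free of the same rank (Nakayama).

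The lifting step is the main obstacle. The standard route is deformation-theoretic: locally free sheaves on $Y_j$ restricting to a given $\sE=\sF_{|Y_{j-1}}$ are locally isomorphic, so the set of such extensions is a torsor-like object governed by $H^1(C,\mathcal{E}nd(\sE_{|C})\otimes \sO_C)$, with $\sL^j\cong\sO_C$ being the square-zero ideal $\sI_{Y_{j-1}|Y_j}$. Equivalently, sections of $\mathcal{H}om(\sG,\sF)$ over $Y_{j-1}$ lift to $Y_j$ provided the obstruction in $H^1(C,\mathcal{H}om(\sG_{|C},\sF_{|C}))=\bigoplus_{i,l}H^1(\sO_{\pp^1}(a_l-a_i))$ vanishes. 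This group is nonzero when $a_l-a_i\le -2$, so a direct vanishing does not suffice. I would instead work one summand at a time. First lift the inclusion of the top summand $\sO_{Y_{j-1}}(a_1)\to\sF_{|Y_{j-1}}$; the obstruction lies in $\bigoplus_l H^1(\sO_{\pp^1}(a_l-a_1))$, which again may be nonzero.

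To avoid this, I expect to change strategy and use the cohomological criterion: twist so that $\sF(-a_1)$ has $H^1$ of its restriction to $C$ controlled. Alternatively, I would mimic the proof of \cite[Lemma 4.5]{AMMP}: the extension classes live in $H^1(C,\mathcal{E}nd(\sF_{|C}))$, and the automorphism group of $\sG_{|Y_{j-1}}$, which is large because of the nonzero $H^0(\sO_{\pp^1}(a_i-a_l))$ for $a_i\ge a_l$, acts transitively on these classes. The point is that, for $\pp^1$, the map $H^0(\sO(a_i-a_l))\otimes(\text{derivation data})\to H^1(\sO(a_l-a_i))$ coming from the first-order neighbourhood should be surjective. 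This is exactly where the specific structure $\nu_C\cong\sO\oplus\sO(-2a)$ and type $\sO_C$ enter, and it is the step I would check most carefully. If it fails in general, the fallback is to prove splitting only for the bundles actually needed in the sequel (e.g.\ $\sN_{Y|X}$ and restrictions of instantons), where the gaps $|a_i-a_l|$ are at most $1$ and all obstruction groups $H^1(\sO_{\pp^1}(\ge -1))$ vanish.
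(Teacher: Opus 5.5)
The step you flag, lifting $\sF_{|Y_{j-1}}\cong\sG_{|Y_{j-1}}$ to $Y_j$ when some gap $a_i-a_l$ is at least $2$, is a genuine gap. It cannot be closed, because the statement is false in this generality. The paper gives no argument of its own here, only a reference to \cite[Proposition 4.6]{AMMP}.

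Your hoped-for transitivity fails. When $\sF_{|Y_{j-1}}$ is split, every automorphism of it lifts to the split bundle on $Y_j$: its entries are sections of line bundles of degree $\ge 0$, and $H^1(\sO_{\pp^1}(d))=0$ for $d\ge 0$. So the split lift is fixed, and, taking it as origin, $\mathrm{Aut}(\sF_{|Y_{j-1}})$ acts linearly, by conjugation, on the space $H^1(C,\mathcal{E}nd(\sF_{|C}))$ of lifts. Hence no nonzero class lies in the orbit of $0$.

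Here is an explicit counterexample. Let $C=\{p\}\times\pp^1\times\{q\}$ be a line of class $e_2$, which is the case $a=0$ of Proposition \ref{pCompleteIntersection}. Let $Y=\{2p\}\times\pp^1\times\{q\}\cong\pp^1\times\mathrm{Spec}\,\mathbb{C}[\epsilon]/(\epsilon^2)$. This is the divisor $2C$ on the quadric $\pi_3^{-1}(q)\in|h_3|$, on which $C^2=0$, so $Y$ is a primitive extension of type $\sO_C$.

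Let $\sO_Y(n)$ be the pullback of $\sO_{\pp^1}(n)$ and let $\eta$ generate $H^1(\sO_{\pp^1}(-2))$. Define $\sF$ by the extension $0\to\sO_Y(-1)\to\sF\to\sO_Y(1)\to 0$ with class $\epsilon\eta\in\Ext^1_Y(\sO_Y(1),\sO_Y(-1))\cong H^1(\sO_{\pp^1}(-2))\otimes\mathbb{C}[\epsilon]/(\epsilon^2)$.
\begin{itemize}
\item The class vanishes on $C$, so $\sF_{|C}\cong\sO_{\pp^1}(1)\oplus\sO_{\pp^1}(-1)$.
\item By Lemma \ref{lPicMultiple}, a splitting would force $\sF\cong\sO_Y(1)\oplus\sO_Y(-1)$, and hence $h^0(\sF(-1))=h^0(\sO_Y)=2$.
\item However, the coboundary $H^0(\sO_Y)\to H^1(\sO_Y(-2))$ is multiplication by $\epsilon\eta$, which has rank one. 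Since $H^0(\sO_Y(-2))=0$, this gives $h^0(\sF(-1))=1$.
\end{itemize}

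Your induction does prove a restricted statement. If $\sF_{|C}\cong\bigoplus_i\sO_{\pp^1}(a_i)$ with $|a_i-a_l|\le 1$ for all $i,l$, then every obstruction group $H^1(\sO_{\pp^1}(a_l-a_i))$ vanishes and $\sF$ splits. However, your claim that this covers $\sN_{Y|X}$ (equivalently $\sE(h_1)_{|Y}$, by \eqref{Normal}) is wrong. By \eqref{eDetNormalMultiple}, its restriction to $C$ is $\sO_{\pp^1}\oplus\sO_{\pp^1}(2a)$, whose gap is $2a\ge 2$ as soon as $a\ge 1$.

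What the paper actually uses from \eqref{eNormalMultiple} needs no splitting at all:
\begin{itemize}
\item The isomorphism $\det\sN_{Y|X}\cong\sO_Y(2h_1)$, used for adjunction in Theorem \ref{tCurves}, follows from Lemma \ref{lPicMultiple}, since both sides restrict to $\sO_{\pp^1}(2a)$ on $C$.
\item The vanishing $h^1(\sN_{Y|X})=0$ and the value of $h^0(\sN_{Y|X})$ in Proposition \ref{pHilbertComponents} follow by tensoring the sequences \eqref{strSheaf} with the locally free sheaf $\sN_{Y|X}$. All the graded pieces are then $\sO_{\pp^1}\oplus\sO_{\pp^1}(2a)$.
\end{itemize}
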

\begin{proof}
See \cite[Proposition 4.6]{AMMP}.
\end{proof}
As a direct consequence of Lemma \ref{lPicMultiple}, Proposition \ref{pSplitMultiple} and Proposition \ref{pCompleteIntersection}, it follows that 
\begin{equation}
\label{eNormalMultiple}
\sN_{Y|X} \cong \sO_Y \oplus \sO_Y(2h_1).
\end{equation}
We now relax the condition in Definition \ref{dPrimitive} by considering quasi-primitive extensions of multiplicity $k+1\ge 2$. Let us recall the definition.

\begin{definition}\label{dTrivialThick} A multiple structure $Y$ on a smooth integral curve $C$ is called {\em quasi-primitive} if $Y$ is a Cohen-Macaulay curve such that $Y$ does not contain the first infinitesimal neighborhood $C^{(1)}$ of $C$. 
\end{definition}

In order to completely characterize locally complete intersection curves appearing as the zero locus of a section of the bundles we are interested in, we will make use of the following lemma.

\begin{lemma}\label{pInfinitesimal}
Let $C\subset X$ be as in Proposition \ref{pCompleteIntersection}. Then $C^{(1)}$ has the following $\sO_X$-resolution:
$$
\begin{array}{ccccccccc}
 & & & \sO_X(-2h_3)\\
     &  \sO_X(-h_1-ah_2-h_2) & & \oplus \\
 0 \arr & \oplus & \stackrel{M}\arr & \sO_X(-h_1-ah_2-h_3) & \arr & \sI_{C^{(1)}|X} & \arr 0  \\    
     &  \sO_X(-2h_1-2ah_2-h_3) & & \oplus  \\
 & & & \sO_X(-2h_1-2ah_2)
\end{array}
$$
where $M$ can be represented by the matrix
$$
\left(
\begin{array}{cc}
     \psi & 0  \\
     0 & \eta \\
     -\eta & \psi
\end{array}
\right)
$$
in which $\eta \in H^0(\sO_X(h_3))$ and $\psi \in H^0(\sO_X(h_1+ah_2))$ are the two generators of $I_C$. In particular, $\chi(\sO_{C^{(1)}})=3-2a$.
\end{lemma}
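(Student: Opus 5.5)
Since $C$ is the complete intersection of $\eta\in H^0(\sO_X(h_3))$ and $\psi\in H^0(\sO_X(h_1+ah_2))$ (Proposition \ref{pCompleteIntersection}), the first infinitesimal neighbourhood $C^{(1)}$ is cut out by $\sI_C^2=(\eta^2,\eta\psi,\psi^2)$. The plan is to write down the standard resolution of the square of an ideal generated by a regular sequence of length two, then read off $\chi(\sO_{C^{(1)}})$.

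\textbf{Step 1: generators.} The three generators $\eta^2,\eta\psi,\psi^2$ have degrees $2h_3$, $h_1+ah_2+h_3$ and $2h_1+2ah_2$. They give the surjection
\[
\sO_X(-2h_3)\oplus\sO_X(-h_1-ah_2-h_3)\oplus\sO_X(-2h_1-2ah_2)\to\sI_{C^{(1)}}.
\]

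\textbf{Step 2: syzygies.} There are two obvious relations:
\[
\psi\cdot\eta^2-\eta\cdot\eta\psi=0,\qquad \psi\cdot\eta\psi-\eta\cdot\psi^2=0.
\]
They live in degrees $h_1+ah_2+2h_3$ and $2h_1+2ah_2+h_3$, which gives the two summands on the left. Their coefficients form exactly the columns of $M$, up to the sign and ordering conventions in the statement.

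\textbf{Step 3: exactness.} I would invoke Hilbert--Burch: the $2\times 2$ minors of $M$ are (up to sign) $\eta^2,\eta\psi,\psi^2$. Since $\eta,\psi$ form a regular sequence, the ideal of maximal minors has depth $2$, so the Buchsbaum--Eisenbud criterion gives injectivity of $M$ and exactness in the middle. Alternatively, one can check locally: $R/(\eta,\psi)^2$ is perfect of codimension $2$ with three generators, so it has two syzygies. This step is the only nonformal point, and it is standard; I do not expect obstacles.

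\textbf{Step 4: Euler characteristic.} Use $\chi(\sO_{C^{(1)}})=\chi(\sO_X)-\chi(\sI_{C^{(1)}})$ together with the resolution. For a line bundle $\sO_X(a_1h_1+a_2h_2+a_3h_3)$ on $\ppp$ we have $\chi=\prod(a_i+1)$. This gives
\[
\chi(\sO_X(-2h_3))=-1,\qquad \chi(\sO_X(-h_1-ah_2-h_3))=0,\qquad \chi(\sO_X(-2h_1-2ah_2))=(-1)(1-2a)=2a-1,
\]
and
\[
\chi(\sO_X(-h_1-(a+1)h_2))=0,\qquad \chi(\sO_X(-2h_1-2ah_2-h_3))=0.
\]
Hence $\chi(\sI_{C^{(1)}})=-1+0+(2a-1)-0=2a-2$, and therefore
\[
\chi(\sO_{C^{(1)}})=1-(2a-2)=3-2a.
\]

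A more conceptual cross-check uses $0\to\sI_C/\sI_C^2\to\sO_{C^{(1)}}\to\sO_C\to0$. Here $\sI_C/\sI_C^2\cong\nu_C\cong\sO_{\pp^1}\oplus\sO_{\pp^1}(-2a)$, so $\chi(\sO_{C^{(1)}})=1+1+(1-2a)=3-2a$, which agrees.
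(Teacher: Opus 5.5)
Your proof is correct and follows the same route as the paper. The paper simply notes that $(\eta^2,\psi^2,\eta\psi)$ is the standard determinantal ideal of maximal minors of $M$, which is exactly your Hilbert--Burch/Buchsbaum--Eisenbud argument, and your Euler characteristic computation (with the conormal cross-check) fills in a step the paper leaves implicit.

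One small inconsistency: as your Step 2 shows, the first syzygy summand is $\sO_X(-h_1-ah_2-2h_3)$, so the $-h_2$ in the statement is a misprint. Step 4 should use that twist rather than $-h_1-(a+1)h_2$, although both have Euler characteristic $0$, so $3-2a$ is unaffected.
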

\begin{proof}
The statement follows directly from the fact that $I_{C^{(1)}|X} = \langle \eta^2, \psi^2, \eta\psi\rangle$ is a standard determinantal ideal defined by the maximal minors of the matrix representing $M$.
\end{proof}

To conclude this section, let us compute the genus of  a multiplicity $k+1$, quasi-primitive extension $Y$ of a rational curve $C$ as in Proposition \ref{pCompleteIntersection}. Consider the filtration \eqref{eFiltration} and write $\sI_j$ in place of $\sI_{Y_j}$. Thanks to \cite[Section 3]{BF},  $\sL_j:=\sI_{j-1}/\sI_{j}$ are line bundles on $C$. Furthermore, the maps
\[
\sL^{\otimes j} \to \sL_j
\]
are generically surjective, thus $\sL_j \cong \sL^{\otimes j} \otimes \sO_C(B_j)$ for some effective divisors $B_j$. Moreover, we have the short exact sequence
\begin{equation}\label{strSheaf2}
0 \rightarrow \sL_j \rightarrow \sO_{Y_j} \rightarrow \sO_{Y_{j-1}} \rightarrow 0,
\end{equation}
which yields
\begin{equation}\label{eChiMultiple}
\chi(\sO_Y) = \chi (\sO_C) + \sum_{j=1}^{k}{\chi(\sL_j)}.
\end{equation}
Since $\sL\cong \sO_{\pp^1}(\alpha)$ and $\sO_C(B_j)\cong \sO_{\pp^1}(b_i)$ for some $\alpha\geq 0$ and $b_i \ge0$, the equation \eqref{eChiMultiple} becomes
\begin{equation}\label{eGenusMultiple}
p_a(Y) = -\sum_{j=1}^{k}{(\alpha+1+b_i)}.
\end{equation}
Notice that a multiplicity $k+1$, quasi-primitive extension $Y$ is primitive of type $\sO_{\pp^1}(\alpha)$ if and only if $b_i=0$ for all $i$, and in this case $p_a(Y) = -k- \frac{k(k+1)}{2}\alpha$.

\section{General properties of vector bundles and instantons}
In this section we collect some general facts and properties about rank two vector bundles on $X$.

Suppose $\sG$ is a rank two vector bundle with Chern classes $c_1=a_1h_1+a_2h_2+a_3h_3$ and $c_2=k_1e_1+ k_2e_2+k_3e_3$. If we denote by $D$ the divisor $D=d_1h_1+d_2h_2+d_3h_3$  we have:

\begin{align}
c_1(\sG (D))=&\sum_{i=1}^3(a_i+2d_i)h_i \label{chernppp}\\
c_2(\sG(D))=&\sum_{i=1}^3{(k_i+a_jd_w+a_wd_j+2d_jd_w)e_i} \notag
\end{align}

for $i\neq j \neq w \neq i$ and $(d_1, d_2, d_3)\in \mathbb Z^{\oplus 3}$.

We will often make use of the Riemann-Roch formula in order to understand the cohomological behavior of the bundles we are interested in. On the threefold $X$ it takes the form: 
\begin{equation}\label{RR}
\chi (\sG)=(a_1+1)(a_2+1)(a_3+1)+1
-\frac{1}{2}(a_1k_1+a_2k_2+a_3k_3 +2(k_1+k_2+k_3))
\end{equation}

We fix the notion of $\mu$-semistability, which will play a central role in the study of instanton bundles. Let $\sF$ be a torsion free sheaf on $X$. We denote by $\mu_h(\sF)$ (or just $\mu(\sF)$ when no confusion arises) the {\sl slope} of $\sF$ with respect to $h$, i.e. the rational number $\mu(F):=c_1(\sF)h^2/\rank(\sF)$. We say that a vector bundle $\sE$ is {\sl $\mu$-stable} (resp. {\sl $\mu$-semistable}) with respect to $h$ if  $\mu( \sM) < \mu(\sE)$ (resp. $\mu(\sM) \le \mu(\sE)$) for each subsheaf $ M$ with $0<\rank(\sM)<\rank(\sE)$.

Let us recall the definition of instanton bundle (cf. \cite[Definition 2.1]{AnMa}) and $D$-'t Hooft bundle (cf. \cite[Definition 5.1]{AMMP})

\begin{definition}\label{dInstanton}
A $\mu$-semistable vector bundle $\sE$ on $\ppp$ is called an instanton bundle of charge $k$ if and only if $c_1(\sE)=0$,
\begin{equation*}H^0(\sE)=H^1(\sE(-h))=0
\end{equation*}
and $c_2(\sE)=k_1e_1+k_2e_2+k_3e_3$ with $k_1+k_2+k_3=k$.

Furthermore, given an effective divisor $D$ on $X$, an instanton bundle $\sE$ is a $D$\textit{-'t Hooft bundle} if and only if $h^0(\sE(D)) \neq 0$.
\end{definition}

Instanton bundles are strictly related to a another class of very interesting vector bundles, namely the Ulrich ones. Indeed one may think of instantons as an approximation of Ulrich bundles (cf. \cite{AnCa}). Let us briefly recall the definition of Ulrich bundles. We refer the interested reader to \cite{E-S-W} and \cite{CMRP} for more details on Ulrich bundles. 

\begin{definition}\label{dUlrich}
Let $Z\subset \pp^N$ be a smooth projective variety of dimension $n$ which is naturally endowed with the very ample line bundle $\sO_Z(h)=\sO_Z \otimes \sO_{\pp^N}(1)$. We say that a vector bundle $\sE$ is Ulrich if and only if
\[
H^i(\sE(-th))=0 \quad \text{for $i\ge 0$ and $1\le t \le n$}.
\]
\end{definition}
 
To describe this connection, notice that as in the classical case of the complex projective space $\pp^3$, every instanton bundle on $X$ can be represented as the cohomology of a monad.
\begin{theorem}\cite[Theorem 1.1 (i)]{AnMa}\label{tMonad}
Let $\sE$ be a charge $k$ instanton bundle on $X$ with $c_2(\sE)=k_1e_1+k_2e_2+k_3e_3$, then $\sE$ is the cohomology of a monad of the form

\begin{equation*}
0\rightarrow
\begin{matrix}
\sO_X^{k_3}(-h_1-h_2) \\
\oplus \\
\sO_X^{k_2}(-h_1-h_3) \\
\oplus \\
\sO_X^{k_1}(-h_2-h_3)
\end{matrix}
\rightarrow
\begin{matrix}
\sO_X^{k_2+k_3}(-h_1) \\
\oplus \\
\sO_X^{k_1+k_3}(-h_2) \\
\oplus\\
\sO_X^{k_1+k_2}(-h_3)
\end{matrix}
\rightarrow
\sO_X^{k-2}
\rightarrow
0.
\end{equation*}

\medskip
Conversely any $\mu$-semistable bundle defined as the cohomology of such  a monad is a  charge $k$ instanton bundle.
\end{theorem}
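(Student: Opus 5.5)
The plan is to obtain the monad from a Beilinson-type spectral sequence, using the full strong exceptional collection on $X=\pp^1\times\pp^1\times\pp^1$ given by the line bundles $\sO_X(-a_1h_1-a_2h_2-a_3h_3)$ with $a_i\in\{0,1\}$. The dual collection involves twists of the bundles $\Omega^1_{\pp^1}$ pulled back via the $\pi_i$. Concretely, I would resolve the diagonal $\Delta\subset X\times X$ by the external tensor product of the three Koszul resolutions of the diagonals of the factors $\pp^1\times\pp^1$. This yields a Beilinson spectral sequence for $\sE(-h)$ whose $E_1$ terms are $H^q(\sE(-h)\otimes\Omega)\otimes\sO_X(-D_I)$. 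Here $D_I=\sum_{i\in I}h_i$ ranges over $I\subset\{1,2,3\}$, and $\Omega$ denotes the corresponding product of pulled-back $\sO_{\pp^1}(-1)$'s. In practice I would use the version with the collection $\sO_X(-D_I)$ and cohomology groups $H^q(\sE\otimes\sO_X(-D_{I^c}))$ after suitable twisting. The target is that only $q=1$ survives, giving exactly the three-term complex in the statement.

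The key cohomological step is therefore a set of vanishing results. I need $H^0(\sE(-D))=0$ for all effective $D$, which follows from $H^0(\sE)=0$. By Serre duality, using $\omega_X=\sO_X(-2h)$ and $\sE^\vee\cong\sE$, I get $H^3(\sE(-D))=H^0(\sE(D-2h))^*=0$ for $0\le D\le h$. The vanishing of $H^2(\sE(-D))$ for $D\in\{0,h_i,h_i+h_j,h\}$ follows dually from $H^1(\sE(-h))=0$. For the remaining cases I would restrict to quadrics: the sequence $0\to\sE(-h_i-h_j-h_w)\to\sE(-h_j-h_w)\to\sE_{|Q}(-h_j-h_w)\to0$ with $Q\in|h_i|$ gives control of $H^1$ and $H^2$ in the intermediate twists. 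Note that $\sE_{|Q}(-1,-1)$ has $h^0=h^2=0$ on $Q\cong\pp^1\times\pp^1$, the $h^2$ vanishing by semistability-type arguments. The ranks then follow from Riemann--Roch \eqref{RR} using \eqref{chernppp}. For example, $h^1(\sE(-h_j-h_w))=-\chi=k_i$ gives the $k_i$ copies of $\sO_X(-h_j-h_w)$. Similarly $h^1(\sE(-h_i))=k_j+k_w$, and $h^1(\sE)=k-2$, since $\chi(\sE)=2-k$. Also $h^1(\sE(-h))=0$, so the term with $\sO_X(-h)$ disappears.

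With all other rows vanishing, the spectral sequence degenerates to a complex $0\to A\to B\to C\to0$ with $A,B,C$ as stated. Its only cohomology, in the middle, is $\sE$, and injectivity and surjectivity of the end maps come from convergence. This is the monad claimed in the statement.

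For the converse, given a monad of this shape with cohomology $\sE$, I would compute $c_1=0$ and $c_2=\sum k_ie_i$ directly. Splitting the monad into the kernel and display sequences, the vanishings $H^0(\sE)=0$ and $H^1(\sE(-h))=0$ follow from the vanishing of the cohomology of the line bundles $\sO_X(-D)$ for $0<D\le h+h_i$ in the relevant degrees, by Künneth. Semistability is assumed, so this direction is routine.

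The main obstacle is the vanishing of the mixed groups $H^2(\sE(-h_i))$ and $H^1(\sE(-h-h_i))$-type terms, which are not immediate from the definition. I would first try the restriction-to-quadric argument combined with Serre duality. If that fails, I would instead choose the Beilinson collection so that only the twists already controlled, namely $\sE(-h)$ and duals, appear.
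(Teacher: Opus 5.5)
The paper does not reprove this statement; it is quoted from \cite{AnMa}. Your overall route is sound. The product of the three Koszul resolutions of the diagonals gives $E_1^{-p,q}=\bigoplus_{|I|=p}H^q(\sE(-D_I))\otimes\sO_X(-D_I)$, with the same $D_I$ in both factors, not $D_{I^c}$ as you write at one point. Your Riemann--Roch values $k_i$, $k_j+k_w$, $k-2$ are correct, and the converse via the display of the monad is fine.

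The genuine gap is the vanishing of the row $q=2$, i.e.\ of $H^2(\sE)$, $H^2(\sE(-h_i))$ and $H^2(\sE(-h_i-h_j))$, which is exactly the step you leave open.
\begin{itemize}
\item It does not follow ``dually from $H^1(\sE(-h))=0$''. Serre duality gives $H^2(\sE(-D_I))\cong H^1(\sE(-h-D_{I^c}))^\vee$, which equals $H^1(\sE(-h))^\vee$ only for $D_I=h$.
\item Your restriction sequence reduces $H^2(\sE(-h_j-h_w))$ to $H^2(\sE_{|Q}(-1,-1))\cong H^0(\sE_{|Q}(-1,-1))^\vee$. So your two claims on $Q$ are one claim, and the appeal to semistability does not prove it: the restriction of a $\mu$-semistable bundle to a member of the pencil $|h_i|$ need not be semistable. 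For the strictly semistable instanton $0\to\sO_X(-h_1+h_2)\to\sE\to\sO_X(h_1-h_2)\to 0$ of Proposition \ref{pStrictlySemistable}, every $\sE_{|Q}$ with $Q\in|h_1|$ contains the destabilizing $\sO_Q(1,0)$.
\item Changing the collection does not help either. The multiplicities in the stated monad are the $h^1$ of $\sE(-D_I)$, and these equal $-\chi$ only if the corresponding $h^2$ vanish. These vanishings are therefore forced by the statement itself and must be proved.
\end{itemize}

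The missing idea is the pulled-back Euler sequence $0\to\sO_X(-h_i)\to\sO_X^{\oplus 2}\to\sO_X(h_i)\to 0$. Twisting it by $\sE(-h)$ gives
\[
0\to\sE(-h-h_i)\to\sE(-h)^{\oplus 2}\to\sE(-h_j-h_w)\to 0 .
\]
Since $h^0(\sE(-h))=h^1(\sE(-h))=0$, you get $H^1(\sE(-h-h_i))\cong H^0(\sE(-h_j-h_w))=0$.

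Next twist the Euler sequence of $h_j$ by $\sE(-h-h_i)$, and then that of $h_w$ by $\sE(-h-h_i-h_j)$. The third terms become $\sE(-2h_i-h_w)$ and $\sE(-2h_i-2h_j)$, which have no sections. Inductively this gives $H^1(\sE(-h-h_i-h_j))=0$ and $H^1(\sE(-2h))=0$.

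By Serre duality these three vanishings are precisely the vanishing of $H^2(\sE(-h_j-h_w))$, $H^2(\sE(-h_w))$ and $H^2(\sE)$, for all choices of indices. Equivalently, the restriction sequence $0\to\sE(-h-h_i)\to\sE(-h)\to\sE_{|Q}(-h)\to 0$ shows $h^0(\sE_{|Q}(-1,-1))=h^1(\sE(-h-h_i))=0$ for every $Q\in|h_i|$. So your claim about $\sE_{|Q}(-1,-1)$ is true, but for this reason and not by semistability. With this in place the spectral sequence has only the row $q=1$, and the rest of your argument goes through.
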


From Theorem \ref{tMonad} and Definition \ref{dInstanton} it follows that minimal charge instanton bundles (i.e. $k=2$) are actually Ulrich bundles (up to a twist). Indeed the cohomology group $H^1(\sE)$ has dimension $k-2$. Thanks to \cite[Proposition 2.1]{AnCa}, the vanishing of this cohomology group implies the vanishing of $H^1(\sE(th))$ for all $t$ and by Serre duality the same holds for $H^2(\sE(th))$. Since in this case we have that $\sE$, $\sE(-h)$ and $\sE(-2h)$ are acyclic, we obtain that $\sE(h)$ is a rank two Ulrich bundle.

Since instanton bundles are $\mu$-semistable by definition, we will often make use of Hoppe's criterion for semistable vector bundles over polycyclic varieties, i.e. varieties $X$ such that $\Pic(X)=\mathbb{Z}^l$.
\begin{proposition}\cite[Theorem 3]{JMPS}\label{pHoppe}
Let $\sE$ be a rank two holomorphic vector bundle over a polycyclic variety $X$ and let $L$ be a polarization on $X$. $\sE$ is $\mu$-(semi)stable if and only if
$$
H^0(X,\sE\otimes \sO_X(B))=0
$$
for all $B \in \Pic(X)$ such that $\delta_L(B) \underset{(<)}{\leq} -\mu_L(E)$, where $\delta_L(B)=\deg_L(\sO_X(B))$.
\end{proposition}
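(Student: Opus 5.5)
The plan is to reduce $\mu$-(semi)stability of the rank two bundle $\sE$ to a condition on rank one subsheaves, and then to turn each rank one subsheaf into a line bundle mapping into $\sE$. Such a map is exactly a nonzero section of a twist $\sE\otimes\sO_X(B)$, which is the condition in the statement.

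First I would note that, since $\rank(\sE)=2$, the only subsheaves in the definition of $\mu$-(semi)stability are those with $\rank(\sM)=1$. Given such an $\sM\subset\sE$, I replace it by its saturation $\sM'$, the kernel of $\sE\to(\sE/\sM)/\mathrm{torsion}$. Then $\sE/\sM'$ is torsion free and $\mu(\sM')\ge\mu(\sM)$, because $\sM'/\sM$ is torsion and so $c_1(\sM')-c_1(\sM)$ is effective. A saturated subsheaf of a reflexive sheaf is reflexive. On a smooth variety a reflexive rank one sheaf is a line bundle, and since $\Pic(X)=\zz^l$ is generated by the divisor classes we may write $\sM'\cong\sO_X(-B)$ for some $B\in\Pic(X)$. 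Thus $\sE$ is $\mu$-stable (resp. $\mu$-semistable) if and only if every line subbundle $\sO_X(-B)\hookrightarrow\sE$ satisfies $\mu(\sO_X(-B))<\mu(\sE)$ (resp. $\le$). Here $\mu(\sO_X(-B))=-\delta_L(B)$.

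Next comes the dictionary between these inclusions and sections. A nonzero map $\sO_X(-B)\to\sE$ is the same thing as a nonzero element of $H^0(\sE\otimes\sO_X(B))$. Any such nonzero map is automatically injective, since $\sO_X(-B)$ is torsion free of rank one. Hence a subsheaf violating stability exists if and only if there is $B$ with $-\delta_L(B)\ge\mu_L(\sE)$, that is $\delta_L(B)\le-\mu_L(\sE)$, and $H^0(\sE(B))\ne0$. Similarly, a subsheaf violating semistability exists if and only if there is such a $B$ with strict inequality $\delta_L(B)<-\mu_L(\sE)$. Taking contrapositives gives both versions of the criterion, with the inequality conventions $\le$ for stability and $<$ for semistability, as encoded in the statement.

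The only step requiring care is the saturation and reflexivity argument, namely that a maximal destabilizing rank one subsheaf can be taken to be a line bundle. I would handle it by the standard facts that reflexive sheaves are normal, so their sections extend over codimension two loci, and that rank one reflexive sheaves on smooth varieties are invertible. I would also check that saturation only increases the slope, using that $c_1$ of a torsion sheaf is effective and that $L$ is ample, so $\delta_L$ of an effective divisor is nonnegative. Everything else is formal.
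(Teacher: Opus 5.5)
Your argument is correct. The paper gives no proof of its own and simply quotes \cite{JMPS}; your reduction (saturate a rank one subsheaf, note that it is then reflexive and hence invertible, and read the inclusion $\sO_X(-B)\hookrightarrow\sE$ as a nonzero section of $\sE(B)$) is the standard proof of Hoppe's criterion in rank two. Two minor remarks: injectivity of a nonzero map $\sO_X(-B)\to\sE$ also uses that $\sE$ is torsion free, and the polycyclic hypothesis plays no role in your argument.
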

Thanks to this criterion one is able to classify strictly $\mu$-semistable instantons. 
\begin{proposition}\cite[Proposition 2.12]{AnMa}\label{pStrictlySemistable}
Let $\sE$ be an instanton bundle of charge $k$. If $\sE$ is not $\mu$-stable then $k=2\gamma^2$ for some $\gamma\in \mathbb{Z}$, $\gamma \neq 0$. Moreover $c_2(\sE)=2\gamma^2e_i$, $i=1,2,3$ and $\sE$ can be constructed as an extension
\begin{equation}\label{ext1}
0\rightarrow \sO_X(-\gamma h_i+\gamma h_j) \rightarrow \sE \rightarrow \sO_X(\gamma h_i-\gamma h_j)\rightarrow 0
\end{equation}
with $i \neq j$.
\end{proposition}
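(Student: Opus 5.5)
We have to prove Proposition \ref{pStrictlySemistable}. The plan is to use Hoppe's criterion (Proposition \ref{pHoppe}) to extract a destabilizing line subbundle, and then determine its degrees and the Chern classes from the instanton conditions.

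\textbf{Producing the destabilizing line bundle.} If $\sE$ is $\mu$-semistable but not $\mu$-stable, Proposition \ref{pHoppe} gives a divisor $B=b_1h_1+b_2h_2+b_3h_3$ with $h^0(\sE(B))\neq 0$ and $\delta_h(B)=2(b_1+b_2+b_3)=0$. Semistability forbids sections for $\delta<0$. We choose $B$ minimal, i.e. such that $\sE(B-h_i)$ has no sections for every $i$; this is possible since $\delta_h(B-h_i)<0$. Then a nonzero section of $\sE(B)$ vanishes in codimension $\ge 2$, and we obtain
\[
0\to\sO_X(-B)\to\sE\to\sI_Z(B)\to 0
\]
with $Z$ empty or of pure codimension two.

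\textbf{Determining $B$.} Taking Chern classes gives $c_2(\sE)=-B^2+[Z]$. Since $H^0(\sE)=0$, we have $B\neq 0$, so the $b_i$ are not all zero while summing to $0$. Next we use $h^1(\sE(-h))=0$ and $h^0(\sE)=0$ to constrain $B$ via the sequence. The latter gives $h^0(\sO_X(-B))=0$, which is automatic. For the former, consider $0\to\sO_X(-B-h)\to\sE(-h)\to\sI_Z(B-h)\to0$. The vanishing $H^1(\sE(-h))=0$ together with $H^2$ of $\sO_X(-B-h)$ forces $H^1(\sI_Z(B-h))$ to inject into $H^2(\sO_X(-B-h))$. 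By Künneth, $h^2(\sO_X(-B-h))=\sum$ over pairs of products of $h^1(\sO_{\pp^1}(-b_i-1))$, which are nonzero only if at least two of the $b_i$ are $\ge 1$. Computing $\chi(\sE(-h))=0$ with \eqref{RR} and comparing with $\chi$ of the two outer terms then bounds $Z$.

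I expect the key computation to be this. Using Serre duality $\sE^\vee\cong\sE$, we also get $h^2(\sE(-h))=h^1(\sE(-h))=0$, and $\sE(-h)$ turns out to be acyclic. Then $\chi(\sE(-h))=0$ forces $\chi(\sO_X(-B-h))+\chi(\sI_Z(B-h))=0$, i.e. $\prod(-b_i)+\prod b_i-\deg$-term$=0$, which gives relations showing $\mathrm{length}$ contributions vanish. The goal is to show $Z=\emptyset$ and that exactly one $b_i$ is zero, say $B=\gamma h_i-\gamma h_j$ (after relabeling the sign). When all $b_i\neq0$ with sum zero, two have the same sign, and one shows that cohomology of $\sO_X(\pm B-h)$ appears in degree $1$. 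That contradicts $h^1(\sE(-h))=0$ through the long exact sequences, since $H^1(\sO_X(-B-h))\to H^1(\sE(-h))$ combined with $H^0(\sI_Z(B-h))=0$ makes the map injective. This case analysis on the signs of the $b_i$ is the main obstacle; I would tabulate $h^q(\sO_X(\pm B-h))$ via Künneth, using that $\sO_{\pp^1}(-b-1)$ has $h^1=b$ for $b\ge1$ and $h^0=-b$ for $b\le-1$.

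\textbf{Conclusion.} Once $B=-\gamma h_i+\gamma h_j$ up to sign and $Z=\emptyset$, we get $c_2=-B^2=2\gamma^2 h_ih_j=2\gamma^2e_w$, where $w$ is the third index. Hence $k=2\gamma^2$, and the sequence becomes the extension \eqref{ext1} (relabeling indices so the class is $e_i$ as stated).
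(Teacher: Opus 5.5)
Your reduction to $0\to\sO_X(-B)\to\sE\to\sI_Z(B)\to0$ with $b_1+b_2+b_3=0$ and $B\neq0$ is fine. So is your exclusion of the sign pattern with one positive and two negative $b_i$, via the injection $H^1(\sO_X(-B-h))\hookrightarrow H^1(\sE(-h))=0$.

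\textbf{The gap is the claim $Z=\emptyset$, and it cannot be closed.} Your Euler characteristic step is vacuous. Since $\sum b_i=0$, we have $\chi(\sO_X(-B-h))+\chi(\sO_X(B-h))=-b_1b_2b_3+b_1b_2b_3=0$ identically. So $\chi(\sE(-h))=0$ only gives $\chi(\sO_Z(B-h))=0$, and this holds automatically. Indeed, $Z$ is a locally complete intersection curve (so there is no ``length'' to vanish) with $\omega_Z\cong\sO_Z(2B-2h)$, which makes $\sO_Z(B-h)$ a theta-characteristic.

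Look at the case you want to reach, $B=\gamma(h_i-h_j)$. There both $\sO_X(\pm B-h)$ are acyclic, because their $h_w$-factor is $\sO_{\pp^1}(-1)$. Consequently:
\begin{itemize}
\item $h^0(\sE)=0$ holds automatically;
\item $h^1(\sE(-h))=h^0(\sO_Z(B-h))$;
\item $\mu$-semistability is automatic, since $\sE$ is an extension of two rank one torsion free sheaves of slope $0$.
\end{itemize}
Nothing forces $Z=\emptyset$, and in fact it fails. Take a line $L$ of class $e_3$. Then $h^2(\sO_X(-2h_1+2h_2))=0$ and $\det\sN_{L|X}\otimes\sO_L(-2h_1+2h_2)\cong\sO_L$, so Theorem \ref{tSerre} gives a vector bundle $\sE$ with
\[
0\to\sO_X(-h_1+h_2)\to\sE\to\sI_{L|X}(h_1-h_2)\to0 .
\]
This bundle has $c_1=0$ and $c_2=2e_3+e_3=3e_3$. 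It satisfies $h^0(\sE)=0$ and $h^1(\sE(-h))=h^0(\sO_L(-2h_2-h_3))=h^0(\sO_{\pp^1}(-1))=0$. It is $\mu$-semistable but not $\mu$-stable.

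Equivalently, $\sE=\pi_{12}^*G$ for a strictly semistable $G$ on $\pp^1\times\pp^1$ with $c_1=0$, $c_2=3$. Every such pullback has $H^{*}(\sE(-h))=0$ by K\"unneth.

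So strictly semistable instantons of charge $3\neq 2\gamma^2$ exist, and the statement as quoted is false. What your method actually proves in this case is weaker. You get $\sE$ as an extension of $\sI_Z(\gamma h_i-\gamma h_j)$ by $\sO_X(-\gamma h_i+\gamma h_j)$, where $Z$ is empty or a curve with $\omega_Z\cong\sO_Z(2B-2h)$ and $h^0(\sO_Z(B-h))=0$. For example, $Z$ can be $n$ disjoint lines of class $e_w$, giving $c_2=(2\gamma^2+n)e_w$. The conclusion \eqref{ext1}, together with $k=2\gamma^2$, holds exactly when $Z=\emptyset$.

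\textbf{The sign analysis also has a hole.} Suppose two $b_i$ are positive and one is negative. Then $H^1(\sO_X(-B-h))=0$, so your injection says nothing. The nonvanishing group is $H^1(\sO_X(B-h))$, but $\sO_X(B-h)$ is not a subsheaf of $\sE(-h)$. The group that actually appears, $H^1(\sI_Z(B-h))\cong H^2(\sO_X(-B-h))$, has the same dimension $b_1b_2|b_3|$ as $H^1(\sO_X(B-h))$, so no contradiction comes from counting.

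For $Z=\emptyset$ you can close this case with $\Ext^1(\sO_X(B),\sO_X(-B))=H^1(\sO_X(-2B))=0$. This splits $\sE$ and forces $h^1(\sE(-h))\ge h^1(\sO_X(B-h))>0$. For $Z\neq\emptyset$ you need a further argument. Note that in this pattern $k=b_1^2+b_2^2+b_3^2+h\cdot[Z]\ge 6$, so it at least does not affect low charge.
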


The previous proposition shows an actual method to construct instanton bundles. Nevertheless instanton obtained in this way are not $\mu$-stable, therefore it is necessary to resort to a different strategy for the construction of stable bundles.

Let $\sF$ be a rank two vector bundle on $X$ and let $s\in H^0\big(\sF\big)$. In general its zero--locus
$(s)_0\subseteq X$ is either empty or its codimension is at most
$2$. We can always write $(s)_0=Y\cup Z$
where $Z$ has codimension $2$ (or it is empty) and $Y$ has pure codimension
$1$ (or it is empty). In particular $\sF(-Y)$ has a section vanishing
on $Z$, thus we can consider its Koszul complex 
\begin{equation}
  \label{seqSerre}
  0\longrightarrow \sO_X(Y)\longrightarrow \sF\longrightarrow \sI_{Z\vert X}(-Y)\otimes\det(\sF)\longrightarrow 0.
\end{equation}
Sequence \ref{seqSerre} tensored by $\sO_Z$ yields $\sI_{Z\vert X}/\sI^2_{Z\vert X}\cong\sF^\vee(Y)\otimes\sO_Z$, whence the normal bundle $\sN_{Z\vert X}:=(\sI_{Z\vert X}/\sI^2_{Z\vert X})^\vee$ of $Z$ inside $X$ satisfies
\begin{equation}
\label{Normal}
\sN_{Z\vert X}\cong\sF(-Y)\otimes\sO_Z.
\end{equation}
If $Y=\emptyset$, then $Z$ is locally complete intersection inside $X$, because $\rk(\sF)=2$. In particular, it has no embedded components. The Serre correspondence reverts the above construction as follows.

\begin{theorem}
  \label{tSerre}
Let $X$ be an $n$--fold with $n\ge2$ and $Z\subseteq X$ a local complete intersection subscheme of codimension $2$ and let $\sL$ be a line bundle on $X$ such that $h^2\big(\mathcal L^\vee\big)=0$.

If $\Lambda^2(\sN_{Z\vert X})\otimes\mathcal L^\vee$ has a generating global section section, then there exists a rank two vector bundle $\sF$ on $X$ satisfying the following properties.
  \begin{enumerate}
  \item $\det(\sF)\cong\mathcal L$.
  \item $\sF$ has a section $s$ such that $Z$ coincides with the zero locus $(s)_0$ of $s$.
  \end{enumerate}
  Moreover, if $H^1\big({\mathcal L}^\vee\big)= 0$, the above two conditions  determine $\sF$ up to isomorphism.
\end{theorem}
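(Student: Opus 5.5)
The plan is the classical Serre construction: build $\sF$ as an extension of $\sI_{Z|X}\otimes\sL$ by $\sO_X$, using the generating section of $\Lambda^2(\sN_{Z|X})\otimes\sL^\vee$ to produce a locally free extension class. Everything rests on the local-to-global spectral sequence for $\Ext$ together with Serre duality on $X$.

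First I compute the local Ext sheaves. Since $Z$ is a local complete intersection of codimension $2$, the Koszul resolution of $\sI_{Z|X}$ gives
\[
\mathcal{E}xt^1(\sI_{Z|X}\otimes\sL,\sO_X)\cong \mathcal{E}xt^2(\sO_Z,\sO_X)\otimes\sL^\vee\cong \Lambda^2(\sN_{Z|X})\otimes\sL^\vee .
\]
The same computation gives $\mathcal{H}om(\sI_{Z|X}\otimes\sL,\sO_X)\cong\sL^\vee$, because a codimension-$2$ ideal is reflexive-hull trivial.

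Next I use the low-degree exact sequence of the local-to-global spectral sequence:
\[
0\to H^1(\sL^\vee)\to \Ext^1(\sI_{Z|X}\otimes\sL,\sO_X)\to H^0(\Lambda^2\sN_{Z|X}\otimes\sL^\vee)\to H^2(\sL^\vee).
\]
The hypothesis $h^2(\sL^\vee)=0$ makes the middle map surjective. Hence the generating section $\xi$ lifts to a global extension class $e$, which corresponds to an extension
\[
0\to\sO_X\to\sF\to\sI_{Z|X}\otimes\sL\to 0 .
\]

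The main step is to show $\sF$ is locally free. This is Serre's criterion: an extension of this kind is locally free at a point $x\in Z$ exactly when the local class $e_x\in \mathcal{E}xt^1_x$ generates the stalk, which is free of rank one over $\sO_{Z,x}$. I verify this locally using the Koszul complex. If $\sI_Z=(f,g)$ near $x$, then the extension given by a generator is $\sO^2\to(f,g)$, which is free. Off $Z$ the sequence is clearly locally free. This is the delicate point, and I expect the local computation here to be the main obstacle.

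The remaining properties follow quickly. For $\det\sF$, the determinant is multiplicative in the sequence and $\sI_Z$ has trivial determinant in codimension $2$, so $\det\sF\cong\sL$. The image of $1\in H^0(\sO_X)$ gives a section $s$; its zero locus is where $\sO_X\to\sF$ fails to be a subbundle, namely $Z$, because the cokernel $\sI_Z\otimes\sL$ fails to be locally free exactly there.

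For uniqueness, suppose $\sF$ satisfies the two conditions with section $s$ vanishing on $Z$. Its Koszul complex \eqref{seqSerre} (with $Y=\emptyset$) presents $\sF$ as such an extension, and local freeness forces the class to map to a generating section. Generating sections differ by units of $H^0(\sO_Z)$ corresponding to automorphisms. Finally, $H^1(\sL^\vee)=0$ makes the map $\Ext^1\to H^0$ injective, so the extension, and hence $\sF$ up to isomorphism, is determined.
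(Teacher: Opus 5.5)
The paper does not prove this statement itself; it only cites Arrondo. Your existence argument is the standard Serre construction and is correct. The hypothesis $h^2(\sL^\vee)=0$ makes $\Ext^1(\sI_{Z|X}\otimes\sL,\sO_X)\to H^0(\Lambda^2\sN_{Z|X}\otimes\sL^\vee)$ surjective, and the local Koszul model shows that a lift of a generating section has a locally free middle term. Two cosmetic points: Serre duality is never actually used, and the scheme-theoretic equality $(s)_0=Z$ should be read off from the local model, where $s=(-g,f)$, not only set-theoretically.

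\textbf{The gap is in the uniqueness step}, at the sentence saying that generating sections differ by units of $H^0(\sO_Z)$ ``corresponding to automorphisms''. Suppose $\sF,\sF'$ both satisfy the two conditions. Their classes map to generating sections $\xi,\xi'$ with $\xi'=u\xi$ for some $u\in H^0(\sO_Z)^*$. Injectivity of $\Ext^1\to H^0$ then only tells you that the two extension classes differ by $u$. Rescaling $\sO_X$ or $\sI_{Z|X}\otimes\sL$ accounts only for units coming from $H^0(\sO_X)^*=\mathbb{C}^*$. When $h^0(\sO_Z)>1$ (for instance $Z$ disconnected), a non-constant $u$ gives a genuinely different extension, and nothing forces the middle terms to be isomorphic.

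This step cannot be patched, because the claim is false as stated. Take $X=\pp^3=\pp(U_1\oplus U_2)$, $Z=\pp(U_1)\sqcup\pp(U_2)$ and $\sL=\sO(2)$. Then $h^1(\sL^\vee)=h^2(\sL^\vee)=0$ and $\Lambda^2\sN_{Z|X}\otimes\sL^\vee\cong\sO_Z$. Let $\omega_i$ generate $\Lambda^2U_i^\vee$, and for $\lambda\mu\neq0$ set $\omega=\lambda\omega_1+\mu\omega_2$. Let $N_\omega$ be the null-correlation bundle defined by $0\to\sO(-1)\xrightarrow{\omega}\Omega^1_{\pp^3}(1)\to N_\omega\to0$.
\begin{itemize}
\item $N_\omega(1)$ has determinant $\sO(2)$.
\item The class of $\omega_1$ in $H^0(N_\omega(1))=\Lambda^2(U_1\oplus U_2)^\vee/\langle\omega\rangle$ is a section vanishing exactly along $Z$.
\item The bundles $N_\omega$ are pairwise non-isomorphic for distinct $[\lambda:\mu]$, since a null-correlation bundle determines $[\omega]$.
\end{itemize}

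What your argument does prove is the correct form of uniqueness. If $H^1(\sL^\vee)=0$, the pair $(\sF,s)$ is determined up to isomorphism by $(Z,\xi)$, with $\xi$ taken up to a nonzero scalar. The bundle $\sF$ is determined by $Z$ alone when $h^0(\sO_Z)=1$.

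In the paper's own applications $h^1(\sO_X(-2h_1))=1$, and bundles with the same curve are parametrized by $\pp(\Ext^1(\sI_{Y|X}(2h_1),\sO_X))$. So only the existence part of the statement is ever used there.
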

\begin{proof}
See \cite{Ar}.
\end{proof}

\section{$h_i$-'t Hooft bundles}

Let $\sE$ be a $h_i$-'t Hooft bundle. The aim of this section is to characterize curves arising as the $0$-locus of a section of $\sE(h_i)$. Let us start by considering the short exact sequence
\begin{equation}\label{eh1tHooft}
0\to \sO_X\to \sE(h_1) \to \sI_{Y_1|X}(2h_1)\to 0.
\end{equation}
First of all, notice that the class of $Y_1$ is exactly $c_2(\sE)$ because $c_2(\sE)=c_2(\sE(h_1))$ by \eqref{chernppp}. Since $\sE$ is an instanton bundle, considering the long exact sequence induced in cohomology by Sequence \eqref{eh1tHooft}, we get $h^0(\sO_{Y_1}(-h_2-h_3))=h^1(\sE(-h))=0$. Moreover restricting \eqref{eh1tHooft} to $\sO_{Y_1}$ we obtain $\det(\sN_{Y_1|X})\cong \sO_{Y_1}(2h_1)$, thus adjunction formula yields $\omega_{Y_1}\cong\sO_{Y_1}(-2h_2-2h_3)$. Finally, the genus formula gives us $p_a(Y_1)=1-k_2-k_3$. Thus, our goal is to characterize locally complete intersection curves satisfying the above conditions. Let us start with the following lemma.

\begin{lemma}\label{lCanonicalMultiple}
 Let $C$  be the connected union of a smooth rational curve $Z$ representing $ae_1+e_i^2$ with $i \in \{2,3\}$ and a line $L$ representing $e_1$.
If $Y \subset X$ is a multiple structure supported on the curve $C$, then $\omega_Y \not\simeq \sO_Y(-2h_2-2h_3)$.
\end{lemma}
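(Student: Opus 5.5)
We argue by contradiction: assume $\omega_Y\cong\sO_Y(-2h_2-2h_3)$. We read the index in the statement as $i\in\{2,3\}$, so $Z$ has class $ae_1+e_i$. We then compare this with the local structure of $Y$ along the two components of $C$. Write $Y_L\subset Y$ and $Y_Z\subset Y$ for the largest Cohen--Macaulay subcurves of $Y$ supported on $L$ and on $Z$, respectively. These are the components of $Y$ obtained by discarding the primary components of the ideal supported on the other curve. Since $C$ is connected, $T:=Y_L\cap Y_Z$ is a non-empty zero-dimensional scheme. We have the standard exact sequences
\[
0\to \sO_Y\to \sO_{Y_L}\oplus\sO_{Y_Z}\to \sO_T\to 0,\qquad 0\to \omega_{Y_L}\to \omega_Y\otimes\sO_{Y_L}\to \sO_{T_L}\to 0,
\]
with $T_L\neq\emptyset$. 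The second sequence uses $\omega_{Y_L}=\mathcal{H}om(\sO_{Y_L},\omega_Y)$, and the analogous sequence holds for $Y_Z$.

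\textbf{Step 1: triviality near $L$.} The line $L$ has class $e_1=h_2h_3$, so it is a fibre of $\pi_{23}$. Hence $Y_L\subset\pi_{23}^{-1}(U)\cong\pp^1\times U$ for an affine open neighbourhood $U$ of the point $\pi_{23}(L)$. Since $h_2$ and $h_3$ are pulled back from $\pp(V_2)\times\pp(V_3)$, we get $\sO_{Y_L}(h_2)\cong\sO_{Y_L}(h_3)\cong\sO_{Y_L}$. Therefore $\omega_Y\otimes\sO_{Y_L}\cong\sO_{Y_L}$, and $\omega_{Y_L}$ sits inside $\sO_{Y_L}$ with non-zero finite cokernel. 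By Serre duality on $Y_L$ this gives $-\chi(\sO_{Y_L})=\chi(\sO_{Y_L})-\ell(T_L)$.

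\textbf{Step 2: the component $Z$.} Here $h_2\cdot Z$ and $h_3\cdot Z$ equal $1$ and $0$ in some order, because $e_i\cdot h_i=1$ and $e_1\cdot h_2=e_1\cdot h_3=0$. Denote by $m_Z$ and $m_L$ the generic multiplicities of $Y$ along $Z$ and $L$. Then $\chi(\omega_Y\otimes\sO_{Y_Z})=\chi(\sO_{Y_Z})-2m_Z$, which gives $-\chi(\sO_{Y_Z})=\chi(\sO_{Y_Z})-2m_Z-\ell(T_Z)$. Globally, $-\chi(\sO_Y)=\chi(\omega_Y)=\chi(\sO_Y)-2m_Z$, so $\chi(\sO_Y)=m_Z$. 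Combining this with $\chi(\sO_Y)=\chi(\sO_{Y_L})+\chi(\sO_{Y_Z})-\ell(T)$ and the linkage-type identity $\ell(T_L)=\ell(T_Z)=\ell(T)$ should produce a numerical contradiction.

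\textbf{Step 3: the main obstacle.} The numerics of Step 2 alone may be self-consistent, so the real content lies in Step 1. There one must show that a multiple structure $Y_L$ on a fibre of $\pi_{23}$ cannot have $\omega_{Y_L}$ strictly contained in $\sO_{Y_L}$ in the required way. Equivalently, using $\chi(\sO_{Y_L})=\ell(T_L)/2>0$, one must rule out such a $Y_L$ having positive Euler characteristic together with this gluing.

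I would try to control $\chi(\sO_{Y_L})$ through the filtration \eqref{eFiltration} and formula \eqref{eGenusMultiple}, applied to the line $L$ (the case $a=0$ of Proposition \ref{pCompleteIntersection}, with normal bundle $\sO\oplus\sO$). For this line every line bundle $\sL_j$ in the filtration has degree $\ge 0$, so \eqref{eGenusMultiple} gives $\chi(\sO_{Y_L})\ge m_L$. I would then combine this with an upper bound on $\ell(T)$, obtained by counting the intersection with the complete intersection structure of $Y_Z$ near $T$ (Proposition \ref{pCompleteIntersection}). The goal is to violate $2\chi(\sO_{Y_L})=\ell(T)$. If this fails, the fallback is to compute $H^0(\omega_Y\otimes\sO_{Y_L})$ directly on $\pp^1\times U$, where everything is explicit.
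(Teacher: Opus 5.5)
There is a genuine gap, and it cannot be closed. Your Steps 1--2 are correct, but the three relations they produce are dependent. Adding $2\chi(\sO_{Y_L})=\ell(T)$ and $2\chi(\sO_{Y_Z})=2m_Z+\ell(T)$, and using $\chi(\sO_Y)=\chi(\sO_{Y_L})+\chi(\sO_{Y_Z})-\ell(T)$, gives back exactly $\chi(\sO_Y)=m_Z$. So everything rests on Step 3, i.e.\ on showing $2\chi(\sO_{Y_L})\neq\ell(T)$. You only sketch this step, and the equality is in fact attained by l.c.i.\ curves.

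Take $i=2$. Then $Z$ and $L$ lie in a quadric $Q\in|h_3|$ and meet transversally at one point $P$. Choose local coordinates $x,y,z$ at $P$ with $Q=\{z=0\}$, $Z=\{x=z=0\}$ and $L=\{y=z=0\}$. Let $Y_Z$ be the Ferrand double structure given by a surjection $(s,t)\colon\nu_Z\cong\sO_Z\oplus\sO_Z(-2a)\to\sO_Z(1)$. Here $s\in H^0(\sO_Z(1))$ is the component on the sub-line-bundle $\sN^\vee_{Q|X}|_Z$ and vanishes at $P$, while $t(P)\neq 0$. Near $P$ one finds $\sI_{Y_Z}=(w,x^2)$ with $w=z-yux$ and $u$ a unit, and $\sI_L=(y,w)$. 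Hence $Y:=Y_Z\cup L$ has $\sI_Y=(w,x^2y)$ near $P$, so $Y$ is l.c.i., $Y_L=L$, and $\ell(T)=2=2\chi(\sO_L)$. In particular, no combination of $\chi(\sO_{Y_L})\ge m_L$ with an upper bound on $\ell(T)$ can produce the contradiction you want.

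In fact this $Y$ satisfies $\omega_Y\cong\sO_Y(-2h_2-2h_3)$. Your own linkage sequences give
$\omega_Y|_Z\cong\omega_{Y_Z}|_Z\otimes\sO_Z(P)\cong\sO_Z(-3)\otimes\sO_Z(P)\cong\sO_Z(-2)$,
using $\omega_{Y_Z}|_Z\cong\omega_Z\otimes\sL^{-1}$ and $\sI_{T|Y_Z}=(y)$. They also give $\omega_Y|_L\cong\omega_L(2P)\cong\sO_L$. These are exactly the restrictions of $\sO_Y(-2h_2-2h_3)$ to $Z$ and $L$. Moreover $\Pic(Y)\to\Pic(C)$ is injective, because $\sI_{C|Y}$ has square zero and is isomorphic to $\sL(-P)\cong\sO_Z$, which has no $H^1$. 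One also checks that $h^0(\sO_Y(-h_2-h_3))=0$.

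A variant gives an example with both multiplicities equal to $2$. Replace $L$ by the double line $2L\subset Q$ and $\sO_Z(1)$ by $\sO_Z(2)$, with $s$ vanishing doubly at $P$; locally $\sI_Y=(w,x^2y^2)$ with $w=z-y^2ux$. So the statement as written admits counterexamples, and so does the implication (i)$\Rightarrow$(ii) of Theorem \ref{tCurves}, where the lemma is used.

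The paper's proof takes a different route: it uses the Serre correspondence for $C$ and $Y$ and the inclusion $\sE xt^1(\sF,\sO_X)\hookrightarrow\sE xt^1(\sG,\sO_X)$ coming from \eqref{FtoG}. That route does not exclude the example either. A morphism $\sG\to\sF$ as in \eqref{FtoG} forces the extension class of $\sG$ to be pulled back from that of $\sF$. Its local part therefore lies in the image of $\omega_C\hookrightarrow\omega_Y$, which is annihilated by $\sI_{C|Y}$. For $Y\neq C$ such a $\sG$ is never locally free, whatever $\omega_Y$ is. An extra hypothesis on $Y$ is needed before any proof, yours or the paper's, can go through.
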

\begin{proof}
The curve $C$ satisfies $h^0(\sO_C(-h_2-h_3))=h^1(\sO_C(-h_2-h_3))=0$, however $\omega_C \not\cong \sO_C(-2h_2-2h_3)$. Now we want to show that 
for any multiple structure $Y$ on such a curve $C$, we also have $\omega_Y\not\cong\sO_Y(-2h_2-2h_3)$. The proof follows the lines of \cite[Lemma 5.5]{AMMP}, we recall the structure of the proof for the reader's convenience. Consider $\sF$ and $\sG$ the sheaves associated to $C$ and $Y$ through the Serre correspondence. Since $C\subset Y$ we get a short exact sequence
\begin{equation}\label{FtoG}
0 \to \sG \to \sF \to \sI_{C|Y}(2h_1)\to 0.
\end{equation}
Suppose by contradiction that $\omega_Y \cong \sO_Y(-2h_2-2h_3)$. The Serre correspondence yields that $\sG$ is a vector bundle. In particular, $\sE xt^1(\sG, \sO_X) = 0$. Applying $\sH om(-,\sO_F)$ to Sequence (\ref{FtoG}), we have an inclusion
$$
0\neq\sE xt^1(\sF, \sO_X) \hookrightarrow \sE xt^1(\sG, \sO_F)=0,
$$
since $\sE xt^i(\sI_{C|Y}(h_i),\sO_F)=0$ for $i=0,1$ by \cite[III 7.3]{Har}, leading to contradiction. Therefore the canonical sheaf of $Y$ cannot have the considered form, proving our result.
\end{proof}
Notice that the curves described in the previous lemma give rise to torsion free sheaves satisfying all the instantonic conditions but locally freeness. We are ready to state the main result of this section.

\begin{theorem}\label{tCurves}
Let $Y\subset X$ be a locally complete intersection curve. The following statements are equivalent:
\begin{enumerate}
\item [(i)]$\omega_Y\cong\sO_Y(-2h_2-2h_3)$ and $h^0(\sO_Y(-h_2-h_3))=0$.
\item [(ii)]$Y$ is the disjoint union of primitive extensions of type $\sO_C$ on smooth rational curves $C$ representing the class $ae_1+e_i$ with $i\in\{2,3\}$ and $a\ge 0$.
\end{enumerate}
\end{theorem}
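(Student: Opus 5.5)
The implication (ii)$\Rightarrow$(i) is the easy direction, so I would do it first. Since $Y$ is a disjoint union, both conditions are local to each connected component, so I may assume $Y$ is a single primitive extension of type $\sO_C$ and multiplicity $k+1$ on a smooth rational $C$ of class $ae_1+e_2$ (the case $e_3$ is symmetric). By \eqref{eNormalMultiple}, $\sN_{Y|X}\cong\sO_Y\oplus\sO_Y(2h_1)$, so $\det\sN_{Y|X}\cong\sO_Y(2h_1)$. Adjunction with $\omega_X=\sO_X(-2h)$ then gives $\omega_Y\cong\sO_Y(-2h_2-2h_3)$. For the vanishing I would use the filtration \eqref{strSheaf} with $\sL=\sO_C$: each graded piece is $\sO_C(-h_2-h_3)$. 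By Proposition \ref{pCompleteIntersection} this is $\sO_{\pp^1}(-a-1)$, which has no sections, and induction on $j$ gives $h^0(\sO_Y(-h_2-h_3))=0$.

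For (i)$\Rightarrow$(ii), I would argue with Serre correspondence and Euler characteristics. By Theorem \ref{tSerre}, condition (i) (with $\det\sN_{Y|X}\cong \sO_Y(2h_1)$, obtained from adjunction) produces a rank two bundle $\sF=\sE(h_1)$ with a section vanishing on $Y$. The condition $h^0(\sO_Y(-h_2-h_3))=0$ translates into $h^1(\sE(-h))=0$ via \eqref{eh1tHooft}. Next, consider a connected component $Y'$ with reduced curve $Y'_{red}$. Since $\omega_{Y'}\cong\sO_{Y'}(-2h_2-2h_3)$, Serre duality gives
\[
h^1(\sO_{Y'}(-h_2-h_3))=h^0(\sO_{Y'}(-h_2-h_3))=0,
\]
so $\chi(\sO_{Y'}(-h_2-h_3))=0$. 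By Riemann--Roch on $Y'$ this gives $p_a(Y')=1-\deg(h_2+h_3)|_{Y'}$. Restricting to irreducible components $C$ of $Y'_{red}$: because $\sO_{Y'}(-h_2-h_3)$ has no sections and $\sO_C(-h_2-h_3)$ is a quotient, I would use the structure sheaf filtrations to show that each $C$ satisfies $h^0(\sO_C(-h_2-h_3))=0$ together with the degree count. I would then classify the reduced curves. An integral $C$ with $p_a(C)\le 1-(h_2+h_3)\cdot C$ must have $(h_2+h_3)\cdot C\le 1$, hence class $ae_1+e_2$ or $ae_1+e_3$ (a pure $e_1$ line also needs treatment). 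By Proposition \ref{pCompleteIntersection} such a $C$ is a smooth rational complete intersection. Connected reduced configurations would be handled by the genus bound: unions of such curves meeting each other either raise the genus or have $(h_2+h_3)$-degree at least $2$. Unions with lines of class $e_1$ are exactly the configurations excluded by Lemma \ref{lCanonicalMultiple}, even after adding multiple structure.

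Once $Y'_{red}=C$ is a single smooth rational curve of class $ae_1+e_i$, I need to show that $Y'$ is primitive of type $\sO_C$. Since $Y'$ is lci, hence Cohen--Macaulay, I first rule out $C^{(1)}\subset Y'$. If $C^{(1)}\subset Y'$, the resolution in Lemma \ref{pInfinitesimal} gives $\chi$ estimates, and I would derive a contradiction with $h^0(\sO_{Y'}(-h_2-h_3))=0$ or with the canonical sheaf; local-complete-intersection-ness would force the containing double structure to behave like a thick structure. So $Y'$ is quasi-primitive, and \eqref{eGenusMultiple} gives $p_a(Y')=-\sum_j(\alpha+1+b_j)$. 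Comparing this with $p_a(Y')=1-(k+1)(a+1)$ together with the condition on $\omega_{Y'}|_C$ and $\omega_{Y'|C}\cong\omega_C\otimes\sL^{-k}$ should force $\alpha=0$ and all $b_j=0$. That means $Y'$ is primitive of type $\sO_C$.

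The main obstacle is the classification of the reduced support, namely excluding reducible connected configurations and components of class $e_1$ or of higher $e_2,e_3$ degree when the multiple structure is arbitrary. This is where I would lean most heavily on Lemma \ref{lCanonicalMultiple} and on careful bookkeeping of the degree and genus of the filtration pieces.
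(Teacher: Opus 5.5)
Your (ii)$\Rightarrow$(i) is the paper's argument. In (i)$\Rightarrow$(ii), however, the classification of the support rests on a step that goes the wrong way. You want to pass the vanishing from $Y'$ to a component $C$ through the quotient $\sO_{Y'}\twoheadrightarrow\sO_C$ and obtain $h^0(\sO_C(-h_2-h_3))=0$. Vanishing of $H^0$ does not descend to a quotient. Even if granted, it gives $\chi(\sO_C(-h_2-h_3))\le 0$, i.e.\ $p_a(C)\ge 1-(h_2+h_3)\cdot C$, which is the opposite of the inequality you then use to bound the degree. What you need is $h^1(\sO_C(-h_2-h_3))=h^0(\omega_C(h_2+h_3))=0$. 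The missing idea is to work on the dualizing side. For $C=Y_{red}$, the surjection $\sO_Y\twoheadrightarrow\sO_C$ dualizes to an inclusion $\omega_C\hookrightarrow\omega_Y$. Hence $\omega_C(h_2+h_3)\hookrightarrow\omega_Y(h_2+h_3)\cong\sO_Y(-h_2-h_3)$ has no sections, and the sequence $0\to\omega_{C_1}\oplus\omega_{C_2}\to\omega_C\to\omega_{C_1\cap C_2}\to 0$ passes this vanishing to each irreducible component. That yields $p_a(C_1)=0$ and $a_2+a_3\le 1$ for every component, whatever the multiple structure is. This bound still allows $e_1$-lines, and the configurations are then eliminated by concrete arguments rather than a genus heuristic. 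First, $e_1$-lines are pairwise disjoint, so a support made only of them is a single line, and then $\sO_Y(-h_2-h_3)\cong\sO_Y$ has sections. Second, two components with $a_2+a_3=1$ are forced to be disjoint by the same sequence and cannot be joined by an $e_1$-line meeting both. Third, the case $Z\cup L$ is Lemma \ref{lCanonicalMultiple}.

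The remaining steps also need repair. For $C$ of class $ae_1+e_2$ one has $h_2\cdot C=1$ and $h_3\cdot C=0$. So $\sO_C(-h_2-h_3)\cong\sO_{\pp^1}(-1)$, not $\sO_{\pp^1}(-a-1)$, and $p_a(Y')=-k$ for multiplicity $k+1$. With your value $1-(k+1)(a+1)$, the comparison with \eqref{eGenusMultiple} only gives $k\alpha+\sum_j b_j=(k+1)a$, which does not force $\alpha=b_j=0$. With the correct value it does, provided $\alpha\ge 0$. Note that $\sO_{\pp^1}\oplus\sO_{\pp^1}(-2a)\twoheadrightarrow\sO_{\pp^1}(\alpha)$ also allows $\alpha=-2a$, i.e.\ $Y_1$ the double curve on the quadric of $|h_3|$ through $C$. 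This is excluded because $h^0(\omega_{Y_1}(h_2+h_3))=2a\neq 0$ while $\omega_{Y_1}\hookrightarrow\omega_{Y'}$. You leave the exclusion of $C^{(1)}\subset Y'$ open. The paper uses the surjection $H^1(\sO_{Y'}(-h_2-h_3))\to H^1(\sO_{C^{(1)}}(-h_2-h_3))$. By $0\to\sN^\vee_{C|X}\to\sO_{C^{(1)}}\to\sO_C\to 0$, the target has dimension $h^1(\sO_{\pp^1}(-1)\oplus\sO_{\pp^1}(-2a-1))=2a$, so this only works for $a\ge 1$. For $a=0$ no argument can succeed. Take $Q_1\in|h_1|$, $Q_3\in|h_3|$ and the line $C=Q_1\cap Q_3$. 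The complete intersection $Y=2Q_1\cap 2Q_3$ is lci and has $\omega_Y\cong\sO_Y(-2h_2)\cong\sO_Y(-2h_2-2h_3)$, because $\sO_Y(h_3)$ is pulled back from a double point of $\pp(V_3)$. It also has $h^0(\sO_Y(-h_2))=0$. Yet it contains $C^{(1)}$ and has three-dimensional Zariski tangent spaces, so it is not primitive. Hence the statement itself needs $a\ge 1$, or an enlarged list of structures on lines.
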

\begin{proof}
$(i)\Rightarrow(ii):$ 

Since conditions in $(i)$ hold for any connected component of $Y$, we can suppose $Y$ to be a connected curve. Let $C:=Y_{red}$ and consider the natural injective map
\begin{equation}\label{eInjCanonical}
0 \to \omega_C\to \omega_Y.
\end{equation}
Since $h^0(\omega_Y(h_2+h_3))=0$ by hypothesis we get $h^0(\omega_C(h_2+h_3))=0$. 
Suppose now that $C$ is reducible and write $C=C_1 \cup C_2$, with $C_1$ irreducible. From the short exact sequence
\begin{equation}\label{eMV}
0 \to \omega_{C_1} \oplus \omega_{C_2} \to \omega_{C} \to \omega_{C_1 \cap C_2} \to 0
\end{equation}
we also have $h^0(\omega_{C_1}(h_2+h_3))=0$, thus $h^0(\omega_{C_1})=0$. As $C_1$ is integral, we have
$$
p_a(C_1)=1-\chi(\sO_{C_1})=1-h^0(\sO_{C_1})+h^0(\omega_{C_1})=0,
$$
i.e., $C_1$ is a smooth rational curve. In order to compute its class, suppose that $C_1$ has class $a_1e_1+a_2e_2+a_3e_3$ in $A^2(X)$. By Riemann-Roch we have
$$
0\geq \chi(\omega_{C_1}(h_2+h_3))=2p_a(C_1)-2+\deg_{\sO_X(h_2+h_3)}(C_1)+1=-1+a_2+a_3.
$$

Therefore, any irreducible component of $C$ has $0\leq a_2+a_3\leq 1$. Moreover, tensoring Sequence \eqref{eMV} by $\sO_X(h_2+h_3)$ gives $h^0(\omega_{C_1 \cap C_2})\leq h^1(\omega_{C_2}(h_2+h_3))$ when $a_2+a_3=1$ and $h^0(\omega_{C_1 \cap C_2})\leq h^1(\omega_{C_2}(h_2+h_3))+1$ when $a_2+a_3=0$.

We are going to prove that $C$ consists of just one irreducible component with $a_2+a_3=1$. First of all, if there is no such component, $C$ would be the union of some irreducible components $C_i$, $i=1,\ldots,r$, representing $e_1$ in $A^2(X)$. Since they are disjoint pairwise, we have $r=1$ and therefore $Y$ would also represent $ae_1$ for some $a\geq 1$. However, this is impossible, since in this case $\sO_Y(-h_2-h_3)\cong\sO_Y$ and, in particular, we would get that $h^0(\sO_Y(-h_2-h_3))\neq 0$.

On the other hand, if $C$ contains two irreducible components $C_1$ and $C_2$ with classes $a_1e_1+e_i$ and $a_2e_1+e_j$ with $i,j\in \{2,3\}$, again by means of the short exact sequence \eqref{eMV}, we see that $h^0(\omega_{C_1\cap C_2})=0$, that is $C_1$ and $C_2$ are disjoint. Since they are components of the connected curve $C$, there should exist a third irreducible component $Z$ of class $ae_1$ connecting them, and in particular intersecting their union in at least two points. But again, the exact sequence 
$$
0 \to \omega_{C_1\cup C_2} \oplus \omega_{Z} \to \omega_{C_1\cup C_2\cup Z } \to \omega_{(C_1\cup C_2) \cap Z} \to 0
$$
implies that $h^0(\omega_{(C_1\cup C_2) \cap Z})\leq 1$, a contradiction.

To complete the argument and exclude the case where $C$ is the reducible union of two curves, we can apply directly Lemma \ref{lCanonicalMultiple} to conclude that $C$ is an irreducible curve with class $ae_1+e_i$ with $i \in \{2,3\}$ and $a\geq 0$.

Let us show now that a curve $Y$ satisfying $(i)$ and with the reduced structure described above is quasi-primitive. By Definition \ref{dTrivialThick}, it suffices to show that $Y$ does not contain the first infinitesimal neighborhood $C^{(1)}$ of $C$. Let us take the exact sequence
$$
0\to \sN^{\vee}_{C\mid X}\to\sO_{C^{(1)}}\to\sO_C\arr 0.
$$
By Proposition \ref{pCompleteIntersection}, we have $\sN^{\vee}_{C\mid F}\cong\sO_{\pp^1}\oplus\sO_{\pp^1}(-2a)$. In particular, $h^1(\sO_{C^{(1)}}(-h_2-h_3))\neq 0$. Therefore, if $C^{(1)}\subset Y$, we would have a surjection
$$
H^1(\sO_Y(-h_2-h_3))\arr H^1(\sO_{C^{(1)}}(-h_2-h_3))\neq \{0\}
$$
contradicting $h^1(\sO_Y(-h_2-h_3))=h^0(\sO_Y(-h_2-h_3))=0$. In order to see that $Y$ is actually primitive, consider the filtration
\begin{equation}\label{filt}
    C=Y_0 \subset Y_1 \subset \dots \subset Y_k=Y
\end{equation}
and observe that from the short exact sequence
$$
0\to\frac{\sI_{Y_1}}{\sI_{C^2}}\to \mathcal{N}^{\vee}_{C\mid X}\cong\sO_{\pp^1}\oplus \sO_{\pp^1}(-2a)\to \sO_{\pp^1}(\alpha)\to 0
$$
we obtain $\alpha\geq 0$.  Computing
$$
0=\chi(\sO_Y(-h_2-h_3))=-\mathrm{mult}_C(Y)+1-p_a(Y).
$$
and putting this information in  Formula \eqref{eGenusMultiple}, we get $\alpha=b_t=0$ for all $t$, hence $Y$ is a primitive extension of type $\sO_C$.

$(ii)\Rightarrow(i)$:

Let $Y$ be a primitive extension of type $\sO_C$ as in $(ii)$. Thanks to \eqref{eNormalMultiple}, adjunction formula yields $\omega_Y\cong\sO_Y(-2h_2-2h_3)$. 
As a last step, using recursively the short exact sequences
$$
0 \longrightarrow \sO_C^{\oplus r_k} \longrightarrow \sO_{Y_k} \longrightarrow \sO_{Y_{k-1}} \longrightarrow 0
$$
we conclude by induction that $h^0(\sO_Y(-h_2-h_3))=0$.
\end{proof}

Our next goal is to describe the open locus, inside the Hilbert scheme of curves representing the class $k_1e_1+k_2e_2+k_3e_3$ of degree $k=k_1+k_2+k_3$ and arithmetic genus $p_a(Y)=1-k_2-k_3$, of locally complete intersection curves satisfying the hypotheses of Theorem \ref{tCurves}.

\begin{proposition}\label{pHilbertComponents}
Let $H \subset \mathcal{H}:=\mathrm{Hilb}^{kt+k_2+k_3}(X)$ be the Hilbert scheme of curves $Y$ representing $k_1e_1+k_2e_2+k_3e_3$ of degree $k=k_1+k_2+k_3$ and arithmetic genus $p_a(Y)=1-k_2-k_3$ satisfying $\omega_Y\cong \sO_Y(-2h_2-2h_3)$ and $h^0(\sO_Y(-h_2-h_3))=0$. Then $H$ is smooth  of dimension $2k$. The number of its irreducible components is given by 
\begin{equation}\label{eNumberComponents}
\Delta(k_1e_1+k_2e_2+k_3e_3):=
\mathlarger{\mathlarger{\sum}}_{{\substack{ 0\le \alpha \le k_2 \\ 0 \le \beta \le k_3 \\ 1 \le \alpha + \beta \le k_1}}}{\binom{k_1-1}{\alpha+\beta-1}},
\end{equation}
if $k_1\ge 1$ and it is irreducible if $k_1=0$.
\end{proposition}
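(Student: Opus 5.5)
By Theorem \ref{tCurves}, a point of $H$ is a disjoint union $Y=\bigsqcup_t Y^{(t)}$. Each $Y^{(t)}$ is a primitive extension of type $\sO_{C_t}$ and multiplicity $m_t$ of a smooth rational curve $C_t$ of class $a_te_1+e_{i_t}$ with $i_t\in\{2,3\}$. The plan is to split $H$ according to these discrete data. First I compute the tangent space, then I parametrize each stratum explicitly.

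\textbf{Smoothness and dimension.} The tangent space to $\mathcal H$ at $Y$ is $H^0(\sN_{Y|X})$, and obstructions lie in $H^1(\sN_{Y|X})$. By \eqref{eNormalMultiple}, on each component $\sN_{Y^{(t)}|X}\cong\sO_{Y^{(t)}}\oplus\sO_{Y^{(t)}}(2h_1)$. I will compute the cohomology using the filtration sequences \eqref{strSheaf} with $\sL\cong\sO_C$. These give $\sO_{Y^{(t)}}$ as an iterated extension of $m_t$ copies of $\sO_{\pp^1}$. Since $h_1$ has degree $a_t$ on $C_t$, $\sO_{Y^{(t)}}(2h_1)$ is an iterated extension of $m_t$ copies of $\sO_{\pp^1}(2a_t)$. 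All $H^1$ therefore vanish, so $H$ is smooth at $Y$ and
\[
h^0(\sN_{Y|X})=\sum_t m_t(1+2a_t+1)=2\sum_t m_t(a_t+1).
\]
Matching classes gives $\sum_t m_t a_t=k_1$ and $\sum_t m_t=k_2+k_3$, so the dimension is $2(k_1+k_2+k_3)=2k$. Being an open condition (the two conditions in (i) are open in flat families), $H$ is an open subscheme of $\mathcal H$, smooth of dimension $2k$.

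\textbf{Irreducible components.} Because $H$ is smooth, its irreducible components coincide with its connected components. I will show that the discrete invariant determining a component is the reduced configuration of a general member.

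I claim that in the closure of a stratum a multiple structure deforms to reduced disjoint curves. The smooth rational curves of class $ae_1+e_i$ form an irreducible family: by Proposition \ref{pCompleteIntersection}, it is an open subset of $|h_{i'}|\times|h_1+ah_i|$-type data. Each stratum with fixed unordered data $\{(a_t,i_t,m_t)\}$ is irreducible of dimension $2k$. Hence strata with some $m_t>1$ cannot form components on their own and must lie in the closure of reduced strata.

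So I count the reduced configurations that are generic. These are $\alpha$ curves of classes $a_se_1+e_2$ and $\beta$ curves of classes $b_re_1+e_3$, with $\alpha+\beta=k_2+k_3$. I also need to check that a curve $ae_1+e_i$ with $a\ge1$ does not degenerate to the disjoint union $(a-1)e_1+e_i\sqcup e_1$. Such a union would be reducible, which violates Theorem \ref{tCurves}. So the invariants are preserved under specialization within $H$.

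The remaining task is to see how the multiplicity strata with equal partitions glue, which should produce the binomial $\binom{k_1-1}{\alpha+\beta-1}$ counting compositions of $k_1$ into $\alpha+\beta$ positive parts. \textbf{Main obstacle.} I expect the main difficulty to be pinning down this exact combinatorics, namely reconciling it with the literal range $0\le\alpha\le k_2$ and $0\le\beta\le k_3$ in \eqref{eNumberComponents}. Here $\alpha,\beta$ must be read as the numbers of components meeting $e_2$ and $e_3$ counted with multiplicity structure. I would first try to prove that a primitive $m$-fold structure on $C$ of class $ae_1+e_i$ is a limit of $m$ disjoint curves of the same class, and that curves with $a_t=0$ behave differently from those with $a_t\ge1$. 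I will settle this by computing $h^0(\sN)$ and the local dimension along each stratum and comparing it with $2k$.

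When $k_1=0$, all $a_t=0$. The curves are then lines of class $e_2$ or $e_3$, which are pairwise disjoint only within one class unless the configuration is forced. So a single component appears, which gives the irreducibility claim.
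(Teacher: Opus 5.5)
Your smoothness and dimension computation is correct. Filtering $\sO_{Y^{(t)}}$ and $\sO_{Y^{(t)}}(2h_1)$ by copies of $\sO_{\pp^1}$ and $\sO_{\pp^1}(2a_t)$ is an explicit version of the paper's appeal to Proposition \ref{pCompleteIntersection} and \eqref{eNormalMultiple}. Your reduction ``components of $H$ $\leftrightarrow$ reduced configuration types'' is also the paper's. The gap is the component count, which you never carry out. Both your justification of the reduction and your proposed route to $\Delta$ fail.

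The sentence ``each stratum with fixed data $\{(a_t,i_t,m_t)\}$ is irreducible of dimension $2k$, hence strata with some $m_t>1$ cannot form components'' proves the opposite. An irreducible $2k$-dimensional stratum of the smooth, pure $2k$-dimensional $H$ would be dense in a component of its own. What you need is a strict inequality. On a fixed $C$ of class $ae_1+e_i$, a primitive structure of type $\sO_C$ and multiplicity $m$ is given by a quotient $\sO_{\pp^1}\oplus\sO_{\pp^1}(-2a)\to\sO_{\pp^1}$ ($2a+1$ parameters), then by at most $h^0(\sO_{\pp^1}(2a))=2a+1$ parameters at each further step. So these curves move in dimension at most $m(2a+1)+1<2m(a+1)$.

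You also need each reduced stratum to be open in $H$. Geometric reducedness is open in the flat universal family, and over that locus the family is smooth (its fibres are smooth by Theorem \ref{tCurves}), so the classes of connected components are locally constant. It follows that distinct reduced types are disjoint dense opens of distinct connected, hence irreducible, components, and that no component is generically non-reduced. This is cleaner than the paper's semicontinuity of $h^0(\sO_Y(-h_1-h_2))$ and $h^0(\sO_Y(-h_1-h_3))$, which only detects the numbers $k_3-\beta$ and $k_2-\alpha$ of lines. Note also that specialization does not preserve types: two disjoint lines of class $e_2$ specialize inside $H$ to a double line.

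Once this bijection is in place there is nothing left to ``glue''. Multiple structures contribute no components, and merging could only lower the count. The number of components is the number of pairs of multisets $\{a_1,\dots,a_{k_2}\}$, $\{b_1,\dots,b_{k_3}\}$ of non-negative integers with $\sum_s a_s+\sum_r b_r=k_1$, which is exactly the paper's ``ways of partitioning''. In $\Delta$, $\alpha$ and $\beta$ are the numbers of $a_s\ge 1$ and of $b_r\ge 1$. But $\binom{k_1-1}{\alpha+\beta-1}$ counts \emph{ordered} compositions of $k_1$ into $\alpha+\beta$ positive parts, so it overcounts as soon as two curves of the same family get different positive parts.

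For example, take $3e_1+2e_2$. No multiple structure occurs, since a double structure would have class $2ae_1+2e_2\neq 3e_1+2e_2$. The only configurations are $\{3e_1+e_2,\,e_2\}$ and $\{2e_1+e_2,\,e_1+e_2\}$, each an open subset of a product of irreducible families, so $H$ has two components. By contrast, $\Delta=\binom{2}{0}+\binom{2}{1}=3$. So the obstacle you flagged will not be removed by a finer study of multiple structures. The correct count agrees with $\Delta$ when, e.g., $k_1\le 2$ or $k_2,k_3\le 1$, but not in general.

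Finally, your $k_1=0$ paragraph is garbled: lines of classes $e_2$ and $e_3$ are generically disjoint. Irreducibility holds because there is a single configuration type. Its locus is open in $\mathrm{Sym}^{k_2}(\pp^1\times\pp^1)\times\mathrm{Sym}^{k_3}(\pp^1\times\pp^1)$ and dense in $H$ by the dimension count above.
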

\begin{proof}
Let us deal with $H_{red}\subset H$ which is the subset of $H$ consisting of points representing reduced curves, since the numerology of both cases is completely analogous. Observe that an element $Y$ of $H_{red}$ representing $k_1e_1+k_2e_2+k_3e_3$ is the disjoint union of $k_2+k_3$ rational curves representing the class $a_ie_1+e_2$ or $b_je_1+e_3$ with $i=1,\dots,k_2$ and $j=i,\dots,k_3$. Thanks to Proposition \ref{pCompleteIntersection}, we get 
\[
h^0(\sN_{Y|X})=2k \quad \text{and} \quad h^1(\sN_{Y|X})=0.
\]
Notice that the result holds also in the non-reduced case, thanks to Equation \eqref{eNormalMultiple}. Thus $H$ is smooth and each irreducible component of $H$ has dimension $2k$. In order to show that $H$ consists of $\Delta(k_1e_1+k_2e_2+k_3e_3)$ irreducible components, suppose that $Y_1$ and $Y_2$ are distinct points of $H$. Then $h^0(\sO_{Y_1}(-h_1-h_2))=h^0(\sO_{Y_2}(-h_1-h_2))$ and $h^0(\sO_{Y_1}(-h_1-h_3))=h^0(\sO_{Y_2}(-h_1-h_3))$ if and only if $Y_1$ and $Y_2$ represent the same configuration type of rational curves (not necessarily the same curve). Equivalently, curves representing different configurations do not satisfy the previous equalities. The semi-continuity of cohomology yields that $Y_1$ and $Y_2$ must lie in different irreducible components, since $h^0(\sN_{Y_1|X})=h^0(\sN_{Y_2|X})=2k$. The number of components follows by counting the possible ways of partitioning $k_1e_1+k_2e_2+k_3e_3$ into the sum of $a_ie_1+e_2$ and $b_je_1+e_3$ with $a_i,b_j\ge 0$.
\end{proof}
As a direct consequence, by imposing that \eqref{eNumberComponents} is equal to $1$, we get the following corollary.
\begin{corollary}\label{cIrreducible}
$H$ is irreducible if and only if $k_1=0$ or $k_i=k-1$ for some $i\in\{1,2,3\}$.
\end{corollary}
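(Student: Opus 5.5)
The corollary should follow from a purely combinatorial analysis of the formula \eqref{eNumberComponents} in Proposition \ref{pHilbertComponents}. When $k_1=0$ that proposition already gives irreducibility, so I will assume $k_1\ge 1$ throughout. In that case every admissible pair $(\alpha,\beta)$ contributes $\binom{k_1-1}{\alpha+\beta-1}\ge 1$, because $0\le \alpha+\beta-1\le k_1-1$. Hence $\Delta=1$ holds exactly when there is a unique admissible pair $(\alpha,\beta)$ and its binomial coefficient equals $1$. I will also assume $k_2+k_3\ge 1$. Otherwise no admissible pair exists and $H$ is empty, since curves with genus $1-k_2-k_3=1$ cannot be of the form described in Theorem \ref{tCurves}; this degenerate case should be set aside explicitly.

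Next I split according to $k_1$ and $k_2+k_3$.

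If $k_1\ge 2$ and $k_2+k_3\ge 2$, there is an admissible pair with $\alpha+\beta=1$ and another with $\alpha+\beta=2$. For the second, I take $(1,1)$, $(2,0)$ or $(0,2)$ as allowed by $k_2,k_3$, noting that $2\le k_1$. These give at least two summands, so $\Delta\ge 2$.

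If $k_2+k_3=1$, which is exactly the condition $k_1=k-1$, the only admissible pair is $(1,0)$ or $(0,1)$. Its coefficient is $\binom{k_1-1}{0}=1$, so $\Delta=1$.

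If $k_1=1$, only pairs with $\alpha+\beta=1$ occur, each with coefficient $\binom{0}{0}=1$. Then $\Delta=[k_2\ge1]+[k_3\ge1]$. This equals $1$ iff exactly one of $k_2,k_3$ vanishes, that is, iff $k_3=k-1$ or $k_2=k-1$ (for instance $k_2=0$ forces $k_3=k-1$).

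Combining the cases: for $k_1\ge1$ we get $\Delta=1$ iff some $k_i=k-1$. Conversely, any $k_i=k-1$ with $k_1\ge 1$ lands in one of the last two cases, which gives the equivalence.

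The only delicate point is the bookkeeping of the edge cases. Specifically, I need to check that the hypothesis $k_i=k-1$ with $i\in\{2,3\}$ together with $k_1\ge 1$ forces $k_1=1$, and to exclude the empty case $k_2=k_3=0$. Both checks are immediate.
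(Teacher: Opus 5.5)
Your argument is correct and is the paper's own: the paper obtains the corollary in one line by imposing $\Delta(k_1e_1+k_2e_2+k_3e_3)=1$ in \eqref{eNumberComponents}, and your case split ($k_1\ge 2$ with $k_2+k_3\ge 2$; $k_2+k_3=1$; $k_1=1$) supplies exactly the combinatorics the paper leaves implicit. Setting aside $k_2=k_3=0$ is justified: for the class $e_1$ (so $k=1$) one has $k_2=k-1$ while $H=\emptyset$, so the statement implicitly needs $k\ge 2$, which holds automatically for instantons.
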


After describing the curves arising as the zero locus of a section of $\sE(h_i)$, we turn our attention back to the $h_i$-'t Hooft bundle. Once we require that $h^0(\sE(h_i))>0$ it is quite natural to ask whether this dimension is bounded by some function depending on the invariants of $\sE$. In the next proposition we find a bound for the space of global sections of $\sE(h_i)$.
\begin{proposition}\label{pBound}
Let $\sE$ be an instanton bundle, then $h^0(\sE(h_i))\le 2$.
\end{proposition}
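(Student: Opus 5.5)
Without loss of generality take $i=1$. The plan is to bound $h^0(\sE(h_1))$ by restricting to a quadric $Q\in|h_1|$ and using the vanishing $h^0(\sE)=0$ from Definition \ref{dInstanton}.

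Fix $Q\in|h_1|$ and tensor the restriction sequence
\[
0\to\sO_X(-h_1)\to\sO_X\to\sO_Q\to 0
\]
by $\sE(h_1)$. This gives
\[
0\to\sE\to\sE(h_1)\to\sE(h_1)_{|Q}\to 0.
\]
Since $h^0(\sE)=0$, we get an injection $H^0(\sE(h_1))\hookrightarrow H^0(\sE_{|Q})$, because $\sO_Q(h_1)\cong\sO_Q$ by Lemma \ref{lQuadrics}. So it is enough to show that $h^0(\sE_{|Q})\le 2$ for a suitable choice of $Q$.

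The restriction $\sE_{|Q}$ is a rank two bundle on $Q\cong\pp^1\times\pp^1$ with $c_1=0$. Moreover $\sE_{|Q}$ and $\sE_{|Q'}$ are fibres of the family $\pi_1$ over $\pp(V_1)$.

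\textbf{Approach via pushforward.} I would consider $\pi_{1*}\sE$. We have $H^0(\sE(h_1))=H^0(\pp^1,(\pi_{1*}\sE)(1))$. The sheaf $\pi_{1*}\sE$ is torsion free on $\pp^1$, hence a vector bundle $\bigoplus\sO(a_t)$. The vanishing $h^0(\sE)=0$ forces all $a_t\le -1$. Therefore $h^0(\sE(h_1))$ equals the number of summands with $a_t=-1$, and this is at most the generic rank of $\pi_{1*}\sE$, which is $h^0(\sE_{|Q})$ for general $Q$.

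\textbf{Main obstacle.} The hard step is showing that $h^0(\sE_{|Q})\le 2$ for general $Q$; I expect this to be the main difficulty. I would argue through the Serre correspondence with a nonzero section $s$ of $\sE(h_1)$, using \eqref{eh1tHooft}. By Theorem \ref{tCurves}, its zero locus $Y$ is a disjoint union of primitive structures on curves of class $ae_1+e_2$ or $ae_1+e_3$. Such a curve meets a general $Q\in|h_1|$ in $\deg_{h_1}Y=k_2+k_3$ points counted with multiplicity, and these points are in general position with respect to the rulings.

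Alternatively, I would bound $h^0(\sE(h_1))$ directly. Restrict \eqref{eh1tHooft} to $Q$ to get
\[
0\to\sO_Q\to\sE_{|Q}\to\sI_{Y\cap Q|Q}\to 0.
\]
This gives $h^0(\sE_{|Q})\le 1+h^0(\sI_{Y\cap Q|Q})$, and the last term is $1$ if $Y\cap Q=\emptyset$ and $0$ otherwise. Hence $h^0(\sE_{|Q})\le 2$, with equality only when $k_2=k_3=0$.

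Feeding this back into the injection, or into the generic rank of $\pi_{1*}\sE$, gives $h^0(\sE(h_1))\le 2$. The delicate point is to make sure that the surjection on global sections is not needed, since only the injection $H^0(\sE(h_1))\hookrightarrow H^0(\sE_{|Q})$ is used.
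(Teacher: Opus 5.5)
Your proof is correct, and it takes a genuinely different route from the paper's.

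\textbf{The paper's route.} The paper stays on $X$. From \eqref{eh1tHooft} it gets the exact identity $h^0(\sE(h_1))=1+h^0(\sI_{Y_1|X}(2h_1))$. It then argues that at most one member of $|2h_1|$ (two fibres of $\pi_1$, possibly coincident) can contain $Y_1$. Otherwise $Y_1$ would lie in a single quadric of $|h_1|$, and $H^0(\sE)\cong H^0(\sI_{Y_1|X}(h_1))$ would be nonzero. This is supplemented by a discussion of the curve classes coming from Theorem \ref{tCurves}.

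\textbf{Your route.} You use $h^0(\sE)=0$ to embed $H^0(\sE(h_1))$ into $H^0(\sE_{|Q})$. You then restrict \eqref{eh1tHooft} to a general $Q\in|h_1|$, where $s_{|Q}$ vanishes only on the finite scheme $Y\cap Q$. This gives $h^0(\sE_{|Q})\le 1+h^0(\sI_{Y\cap Q|Q})\le 2$.

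\textbf{Comparison.} Your argument needs only that $Y$ is a curve, not its structure from Theorem \ref{tCurves}, and it avoids the ``multiple case is analogous'' step. The paper's argument, in exchange, yields the exact formula and the characterization of equality ($Y_1$ lying on a member of $|2h_1|$). That characterization is what Corollary \ref{cQuarticSectional} and Section 6 rely on. Your pushforward paragraph is correct but redundant. The appeal to Theorem \ref{tCurves} and to ``general position with respect to the rulings'' is unjustified and unused. The ``alternatively'' paragraph alone finishes the proof.

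\textbf{A numerical slip.} A general $Q\in|h_1|$ meets $Y$ in $Y\cdot h_1=k_1$ points, not $k_2+k_3$, since $e_1h_1=1$ and $e_2h_1=e_3h_1=0$. So equality $h^0(\sE_{|Q})=2$ for general $Q$ forces $k_1=0$, consistent with Theorem \ref{tSpecialSectional}. Your condition $k_2=k_3=0$ never holds for the curves of Theorem \ref{tCurves}, because each component contributes to $k_2+k_3$. Taken literally, it would give $h^0(\sE(h_1))\le 1$ always, which is false. The inequality you are proving is unaffected.

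\textbf{A caveat shared with the paper.} Like the paper, you tacitly assume that a nonzero section of $\sE(h_1)$ vanishes in codimension two. This holds for $\mu$-stable $\sE$ by Proposition \ref{pHoppe}. For strictly semistable $\sE$, a section can vanish along a divisor in some $|h_j|$; in that case the bound is read off directly from \eqref{ext1}.
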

\begin{proof}
Suppose $\sE$ is a $h_1$-'t Hooft bundle, so that $H^0(\sE(h_1))\neq 0$. From Sequence \eqref{eh1tHooft}, we deduce $h^0(\sE(h_1))=1+h^0(\sI_{Y_1|X}(2h_1))$. Thus it is enough to count how many surfaces in the linear system $|2h_1|$ contain $Y_1$. Notice that any element $S$ of $|2h_1|$ is either the disjoint union of two quadrics or a double structure supported on a quadric. Notice that $Y_1$ is never contained in a quadric, for otherwise $H^0(\sE)\cong H^0(\sI_{Y_1|X}(h_1))\neq 0$ and being $\sE$ an instanton bundle, it has no global sections. Moreover any curve representing the class $ae_1+e_i$ with $i\neq 1$ and $a \ge 0 $ is never contained in a quadric in the linear system $|h_1|$. A curve with class $ae_2$ is contained in a unique $h_1$-quadric (if any) and the same holds for $be_3$, thus $h^0(\sI_{Y_1|X}(2h_1))\le 1$, from which the statement follows. The multiple case is completely analogous, since the only rational curves contained in a double quadric are the ones contained in the support or their multiple thickening.
\end{proof}
From the previous proposition one can deduce a couple of direct, yet interesting, corollaries.
\begin{corollary}\label{cQuarticSectional}
Let $\sE$ be an instanton bundle. Then $h^0(\sE(h_1))= 2$ if and only if given $s \in H^0(\sE(h_1))$, $Y_1=(s)_0$ is contained in a unique quartic surface $S\in|2h_1|$.
\end{corollary}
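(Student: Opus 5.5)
The plan is to read both implications directly off the Serre correspondence sequence \eqref{eh1tHooft} and the count carried out in the proof of Proposition \ref{pBound}.

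Fix a nonzero $s\in H^0(\sE(h_1))$ and let $Y_1=(s)_0$. Since $\sE$ is an instanton bundle, $H^0(\sE)=0$. Hence $s$ cannot vanish along a divisor: if $s$ vanished on an effective divisor $D\neq 0$, then $\sE(h_1-D)$ would have a section. Any effective $D\neq 0$ has degree at least $2$ with respect to $h$, so this contradicts $\mu$-semistability via Proposition \ref{pHoppe} in the case $D=h_1$, and otherwise it gives $H^0(\sE)\neq 0$ or a destabilizing subsheaf. Therefore $Y_1$ is a curve and Sequence \eqref{eh1tHooft} is exact. Taking cohomology gives
\[
h^0(\sE(h_1))=1+h^0(\sI_{Y_1|X}(2h_1)),
\]
because $h^1(\sO_X)=0$. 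The corollary is therefore equivalent to the statement $h^0(\sI_{Y_1|X}(2h_1))=1$, i.e. $Y_1$ lies on exactly one surface of $|2h_1|$.

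For the forward direction, suppose $h^0(\sE(h_1))=2$. Then $h^0(\sI_{Y_1|X}(2h_1))=1$ for every nonzero section $s$. So $Y_1$ lies in a quartic $S\in|2h_1|$, and $S$ is unique because the space of such quartics is projectively a single point.

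For the converse, suppose $Y_1$ lies in a unique $S\in|2h_1|$. Then $h^0(\sI_{Y_1|X}(2h_1))\ge 1$, and uniqueness gives equality, since a $2$-dimensional space of equations would produce a pencil of quartics containing $Y_1$. Hence $h^0(\sE(h_1))=2$. This agrees with the upper bound of Proposition \ref{pBound}, whose proof already shows $h^0(\sI_{Y_1|X}(2h_1))\le 1$.

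The one point to check is that the condition does not depend on the choice of $s$. This follows because the first equivalence shows that $h^0(\sE(h_1))$ alone controls $h^0(\sI_{Y_1|X}(2h_1))$ for every $s\neq 0$. I expect no real obstacle. The only mildly delicate step is excluding divisorial zeros of $s$, which is settled by $H^0(\sE)=0$ and semistability, exactly as already used for Sequence \eqref{eh1tHooft}.
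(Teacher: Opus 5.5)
Your core argument is the paper's. The cohomology of \eqref{eh1tHooft} gives $h^0(\sE(h_1))=1+h^0(\sI_{Y_1|X}(2h_1))$, and the count in the proof of Proposition \ref{pBound} bounds the second term by $1$, so the equivalence follows at once.

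The problem is your preliminary step excluding divisorial zeros of $s$. Suppose $s$ vanishes on an effective $D=d_1h_1+d_2h_2+d_3h_3\neq 0$. Then $\deg_h(h_1-D)=2-2(d_1+d_2+d_3)\le 0$, with equality exactly for $D\in\{h_1,h_2,h_3\}$. The three cases behave differently:
\begin{itemize}
\item For $D=h_1$ the contradiction comes from $H^0(\sE)=0$, not from semistability, since Hoppe's criterion for semistability only concerns $\delta(B)<0$.
\item For $D$ of higher degree, semistability does rule it out.
\item For $D=h_2$ or $h_3$, a section of $\sE(h_1-h_j)$ is a slope-zero subsheaf. $\mu$-semistability allows this, and it is not a section of $\sE$.
\end{itemize}

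This is not merely a gap in the justification. Proposition \ref{pStrictlySemistable} with $\gamma=1$ gives a strictly semistable instanton $0\to\sO_X(h_2-h_1)\to\sE\to\sO_X(h_1-h_2)\to 0$ with $c_2(\sE)=2e_3$. Since $H^0(\sO_X(2h_1-h_2))=0$, we get $H^0(\sE(h_1))=H^0(\sO_X(h_2))$, which is $2$-dimensional. Because $\sO_X(h_2)\subset\sE(h_1)$ is a subbundle, every nonzero section vanishes exactly on a quadric of $|h_2|$, and such a quadric lies in no surface of $|2h_1|$. So the statement itself fails for this $\sE$, and no argument using semistability alone can work.

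The fix is to assume $\sE$ is $\mu$-stable, as the paper tacitly does. Its one-line proof relies on \eqref{eh1tHooft} and on the proof of Proposition \ref{pBound}, both of which presuppose that $(s)_0$ is a curve. By Proposition \ref{pSStHooft}, the problem can only occur for strictly semistable bundles of charge $2$. Under $\mu$-stability, Proposition \ref{pHoppe} gives $H^0(\sE(B))=0$ for all $B$ with $\delta(B)\le 0$. Hence $H^0(\sE(h_1-D))=0$ for every effective $D\neq 0$ at once, and the rest of your proposal goes through unchanged.
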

\begin{proof}
From Proposition \ref{pBound} and its proof it follows directly that $h^0(\sE(h_1))=2$ if and only if $h^0(\sI_{Y_1|X}(2h_1))=1$, i.e. $Y_1$ is contained in a surface in the linear system $|2h_1|$.\end{proof}
\begin{corollary}
Let $Y$ be a curve arising as the zero locus of a section of $\sE(h_1)$, then $Yh_1\neq0$ implies $h^0(\sE(h_1))=1$.
\end{corollary}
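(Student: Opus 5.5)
The plan is to prove the contrapositive: if $h^0(\sE(h_1))=2$, then $Yh_1=0$. By Corollary \ref{cQuarticSectional}, $h^0(\sE(h_1))=2$ holds exactly when $Y=Y_1=(s)_0$ lies in a surface $S\in|2h_1|$. So it suffices to show that a curve $Y$ as in Theorem \ref{tCurves} that lies in such an $S$ has $Yh_1=0$. Write $Y$ with class $k_1e_1+k_2e_2+k_3e_3$. Since $e_1h_1=h_1h_2h_3$ and $e_2h_1=e_3h_1=0$, we have $Yh_1=k_1$. The goal is therefore to show $k_1=0$.

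First I use the geometry of $S$, following the proof of Proposition \ref{pBound}. The surface $S$ is either a disjoint union of two quadrics in $|h_1|$ or a double structure on one such quadric $Q$. In either case $S_{red}$ is a union of fibres of $\pi_1$. Hence every irreducible component of $Y_{red}$ lies in a fibre of $\pi_1$, so $\pi_1$ is constant on it. The class of such a component therefore has zero $e_1$-coefficient, since $\pi_1$ of the component is a point.

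Next I compare this with Theorem \ref{tCurves}. There $Y$ is a disjoint union of primitive extensions of type $\sO_C$, each supported on a smooth rational curve $C$ of class $ae_1+e_i$ with $i\in\{2,3\}$. By the previous step each such $C$ has $a=0$, so $C$ is a line of class $e_2$ or $e_3$. This matches the remark in Proposition \ref{pBound} that curves of class $ae_1+e_i$ with $a\ge 0$ lie in no $h_1$-quadric unless the $e_1$-part vanishes.

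Finally I compute the class of $Y$. A primitive extension of multiplicity $m$ supported on $C$ has class $m[C]$. Summing over the components gives $k_1=0$, hence $Yh_1=0$. Contrapositively, $Yh_1\neq 0$ forces $h^0(\sE(h_1))<2$. Since $Y$ exists, $h^0(\sE(h_1))\ge 1$, so $h^0(\sE(h_1))=1$.

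The only delicate point is the non-reduced case, where $S$ is a double quadric. There I only use that $Y_{red}\subset S_{red}=Q$, which is enough because the class is determined by the support and the multiplicities. I therefore expect no real obstacle.
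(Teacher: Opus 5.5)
Your proof is correct and follows essentially the paper's route: both use Sequence \eqref{eh1tHooft} (which you access through Corollary \ref{cQuarticSectional}) to reduce to showing that $Yh_1\neq 0$ prevents $Y$ from lying on a surface of $|2h_1|$. You justify this step more carefully than the paper's one-line proof, by observing that every such surface is supported on fibres of $\pi_1$. The appeal to Theorem \ref{tCurves} is a harmless detour: the class of $Y$ is already a multiplicity-weighted sum of classes of components contained in fibres of $\pi_1$, and each of these has zero $e_1$-coefficient.
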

\begin{proof}
 The hypothesis $Yh_1\neq 0$ imply that $Y$ is not contained in any element of the linear system $|h_1|$, thus the statement directly follows from Sequence \eqref{eh1tHooft}.
\end{proof}

We end this section by showing that one can actually reverse the relation between $h_i$-'t Hooft instanton bundle and locally complete intersection curves $Y$ satisfying $\omega_Y\cong \sO_Y(-2h_j-2h_w)$ and $h^0(\sO_Y(-h_j-h_w))=0$ with $ i\neq j \neq w \neq i$.

\begin{proposition}\label{pSerre}
    Let $Y$ be a locally complete intersection curve as in Theorem \ref{tCurves} such that $Y$ is not a line. Then every non-trivial element of $\Ext^1(\sI_{Y|X}(2h_1),\sO_X)$ is a $\mu$-stable $h_1$-'t Hooft bundle.
\end{proposition}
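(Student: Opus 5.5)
Given a non-trivial extension
\[
0\to \sO_X\to \sF\to \sI_{Y|X}(2h_1)\to 0,
\]
we show in turn that $\sF$ is locally free, that $\sE:=\sF(-h_1)$ satisfies the instanton conditions, and that $\sE$ is $\mu$-stable; the 't Hooft property is then automatic from the section of $\sE(h_1)$. Note that $\omega_X=\sO_X(-2h)$ and $\omega_Y\cong\sO_Y(-2h_2-2h_3)$ by Theorem \ref{tCurves}.

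\textbf{Local freeness.} We have $\det\sN_{Y|X}\cong \omega_Y\otimes\omega_X^{\vee}|_Y\cong\sO_Y(2h_1)$, so $\det(\sN_{Y|X})(-2h_1)\cong\sO_Y$ has a generating section. Moreover $h^1(\sO_X(-2h_1))=h^2(\sO_X(-2h_1))=0$ by Künneth, since $\sO_{\pp^1}(-2)$ has only $H^1$ and the other factors contribute only $H^0$. Then $\Ext^1(\sI_Y(2h_1),\sO_X)\cong H^0(\sE xt^1(\sI_Y(2h_1),\sO_X))\cong H^0(\omega_Y(2h_2+2h_3))$, and by Lemma \ref{lPicMultiple} and Proposition \ref{pSplitMultiple} this is $H^0(\sO_Y)$. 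The key step is that $H^0(\sO_Y)$ is spanned by elements that generate $\sO_Y$ on each connected component. We get this from the filtration \eqref{strSheaf}: the graded pieces are $\sO_C$, so $H^0$ is built from constants, and a section is nowhere vanishing on a component unless it is nilpotent there. I expect this to be the main obstacle. A non-trivial extension whose class is nilpotent on some component, or vanishes on a component of a disjoint union, need not be locally free, so one may have to read "non-trivial" as "generating section", as Theorem \ref{tSerre} and \cite{Ar} do. I would handle this by decomposing the extension class by components and invoking Serre's criterion locally.

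\textbf{Instanton conditions.} We have $c_1(\sE)=0$ and $c_2(\sE)=[Y]$ by \eqref{chernppp}. Twisting the extension by $-h_1$ gives
\[
0\to\sO_X(-h_1)\to\sE\to\sI_Y(h_1)\to0.
\]
Then $h^0(\sE)=h^0(\sI_Y(h_1))$. This vanishes because no component of $Y$, which has class $ae_1+e_i$ with $i\in\{2,3\}$, lies in an $h_1$-quadric, as in the proof of Proposition \ref{pBound}. Twisting instead by $-h$ gives
\[
0\to\sO_X(-2h_1-h_2-h_3)\to\sE(-h)\to\sI_Y(-h_2-h_3)\to0.
\]
Here $\sO_X(-2h_1-h_2-h_3)$ is acyclic, so $h^1(\sE(-h))=h^1(\sI_Y(-h_2-h_3))=h^0(\sO_Y(-h_2-h_3))=0$ by the hypothesis of Theorem \ref{tCurves}.

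\textbf{Stability.} By Proposition \ref{pHoppe} it suffices to show $H^0(\sE(B))=0$ for every $B=b_1h_1+b_2h_2+b_3h_3$ with $b_1+b_2+b_3\le 0$. From the first sequence twisted by $B$, this reduces to $h^0(\sO_X(B-h_1))=0$ and $h^0(\sI_Y(B+h_1))=0$. The first holds since $B-h_1$ is never effective. For the second, $B+h_1$ is effective only if all $b_i+\delta_{i1}\ge0$ with total at most $1$, i.e. $B+h_1\in\{0,h_1,h_2,h_3\}$. The cases $0$ and $h_1$ give $H^0(\sI_Y)=0$ and $h^0(\sI_Y(h_1))=0$ as above. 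A quadric in $|h_j|$ with $j\ne1$ contains $Y$ only if $Y$ has class $ke_j$ with a single component, i.e. a primitive structure on one line $e_j$. For $Y$ a primitive extension of multiplicity $>1$ on such a line, its ideal contains no $h_j$-form unless the extension lies in the quadric. This should be ruled out because then $\sN_{Y}$ would contain $\sO_Y(h_j)|$, contradicting \eqref{eNormalMultiple}. The hypothesis that $Y$ is not a line excludes the remaining case. Alternatively, Proposition \ref{pStrictlySemistable} rules out strict semistability directly unless $c_2=2\gamma^2e_i$, and that case is checked by hand.
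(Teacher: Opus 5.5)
Your outline matches the paper's: the Serre sequence twisted by $-h_1$ and by $-h$, then Proposition \ref{pHoppe}. On two points you are more careful than the paper's proof, which asserts local freeness ``by Theorem \ref{tSerre}'' and tests only twists of negative degree, and so proves only semistability. Your reading of ``non-trivial'' as ``image a unit in $H^0(\sO_Y)$'' is forced; no local argument rescues the other classes. Note also that $h^1(\sO_X(-2h_1))=1$, not $0$, since K\"unneth gives $H^1(\sO_{\pp^1}(-2))\otimes H^0\otimes H^0$. Hence $\Ext^1(\sI_{Y|X}(2h_1),\sO_X)\to H^0(\sO_Y)$ is surjective with a one-dimensional kernel of locally split classes, whose middle terms are not locally free along $Y$.

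\textbf{First gap: the vanishing $h^0(\sI_{Y|X}(h_1))=0$.} Your justification is false. Components with $a=0$ are $e_2$- or $e_3$-lines, which $\pi_1$ contracts, so they do lie in quadrics of $|h_1|$; the sentence you borrow from the proof of Proposition \ref{pBound} is wrong for $a=0$. The vanishing holds if and only if $Y$ is not contained in a fibre of $\pi_1$, and this can fail. Take $Y=\{p\}\times\pp(V_2)\times\{r,r'\}$ with $r\neq r'$. It satisfies Theorem \ref{tCurves} and is not a line, yet every extension has $h^0(\sE)=h^0(\sI_{Y|X}(h_1))=1$. So no argument closes this step. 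The hypothesis must be ``$Y$ is not contained in a quadric of $|h_1|$'', of which ``not a line'' is only the one-component case.

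\textbf{Second gap: the degree-zero twists $B=h_j-h_1$.} Your claim that a quadric of $|h_j|$ contains $Y$ only if $Y$ is a primitive structure on one $e_j$-line is wrong.
\begin{itemize}
\item An $e_j$-line surjects onto $\pp(V_j)$, so it lies in no such quadric.
\item An integral curve of class $ae_1+e_3$ is a complete intersection of type $(h_2,h_1+ah_3)$ by Proposition \ref{pCompleteIntersection}, so it does lie in one.
\item Any $e_3$-lines with the same $\pi_2$-image lie in one $h_2$-quadric.
\item A double $e_3$-line inside that quadric has type $\sO_C$ and $\sO_Y(h_2)\cong\sO_Y$, so nothing contradicts \eqref{eNormalMultiple}.
\end{itemize}
Moreover, $h^0(\sI_{Y|X}(h_j))\neq 0$ decides nothing, because $H^1(\sO_X(h_j-2h_1))\cong\mathbb{C}^2$. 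What matters is whether that section lifts to $\sE(B)$, and this depends on the extension class.

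Your fallback is the right mechanism. Negative-degree twists give semistability, and then Propositions \ref{pStrictlySemistable} and \ref{pSStHooft} leave only $c_2\in\{2e_2,2e_3\}$. But the check by hand fails there.
\begin{itemize}
\item A non-split extension $0\to\sO_X(h_1-h_2)\to\sE\to\sO_X(h_2-h_1)\to 0$ is a strictly semistable instanton with $H^0(\sE(h_1))\cong H^0(\sO_X(h_2))$.
\item The section lifting the equation of $\pi_2^{-1}(q)$ vanishes exactly on $Z\times\{q\}\times\pp(V_3)$. Here $Z\subset\pp(V_1)$ is the zero scheme of the extension class, viewed in $H^1(\sO_X(2h_1-2h_2))\cong H^0(\sO_{\pp(V_1)}(2))$.
\item Hence, when $Y$ is two disjoint $e_3$-lines in one quadric of $|h_2|$ (or symmetrically two $e_2$-lines in one quadric of $|h_3|$), $\Ext^1(\sI_{Y|X}(2h_1),\sO_X)$ contains a locally free element that is not $\mu$-stable.
\end{itemize}
So the statement must also exclude these configurations, or conclude only $\mu$-semistability. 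The paper's argument does not supply the missing steps either.
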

\begin{proof}
Let $\sE$ a non-trivial element $\Ext^1(\sI_{Y|X}(2h_1),\sO_X)$. Then $\sE$ is a vector bundle thanks to Theorem \ref{tSerre}. Let us start by proving that $h^0(\sE)=h^1(\sE(-h))=0$. Notice that $\sE$ fits into \eqref{eh1tHooft}, thus tensoring it by $\sO_X(-h_1)$ and $\sO_X(-2h_1-h_2-h_3)$ and considering the long exact sequence induced in cohomology, one gets the desired vanshings. It remains to prove that $\sE$ is $\mu$-stable and we will make use of Proposition \ref{pHoppe}. Let $D=d_1h_1+d_2h_2+d_3h_3$ a divisor of negative degree. We show that $\sE(D)$ has no global sections. Indeed the cohomology of Sequence \eqref{eh1tHooft} tensored by $\sO_X(D-h_1)$ returns
\[
h^0(\sE(D))\le h^0(\sI_{Y|X}(D+h_1))+h^0(\sO_X(D-h_1)).
\]
Notice that since at least one $d_i$ is negative, trivially $h^0(\sO_X(D-h_1))=0$. Using the short exact sequence 
\[
0 \to \sI_{Y|X} \to \sO_X \to \sO_Y \to 0
\]
it is straightforward to see that $h^0(\sI_{Y|X}(D+h_1))=0$. Notice that if $Y$ is a line, it will represent the class $e_2$ or $e_3$. In both cases $h^0(\sI_{Y|X}(h_1))=1$ which implies $h^0(\sE)\neq 0$, thus the proof is complete.
\end{proof}
Thanks to the previous results, we are able to fully characterize $\mu$-stable $h_i$-'t Hooft bundles by means of the zero locus of a section of $\sE(h_i)$. Thus it is interesting to ask whether strictly $\mu$-semistable instanton bundles can be $h_i$-'t Hooft.
\begin{proposition}\label{pSStHooft}
Let $\sE$ be a strictly $\mu$-semistable instanton bundle with $c_2(\sE)=2\gamma^2 e_i$. Then $\sE$ is a $h_j$-'t Hooft bundle if and only if $j \neq i$ and $\gamma=1$.
\end{proposition}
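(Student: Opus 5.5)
The plan is to work directly with the extension in Proposition \ref{pStrictlySemistable} and compute $h^0(\sE(h_m))$ for each $m\in\{1,2,3\}$ from the two line bundles in the sequence. Everything reduces to the Künneth formula for line bundles on $X$:
\[
h^p(\sO_X(a_1h_1+a_2h_2+a_3h_3))=\sum_{p_1+p_2+p_3=p}\prod_{t} h^{p_t}(\sO_{\pp^1}(a_t)).
\]

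\textbf{Normalising the extension.} First I fix notation carefully. For a line bundle $\sO_X(D)$ with $D=\gamma(h_j-h_w)$, an extension $0\to\sO_X(D)\to\sE\to\sO_X(-D)\to0$ has
\[
c_2(\sE)=-D^2=2\gamma^2h_jh_w=2\gamma^2e_i,
\]
where $\{i,j,w\}=\{1,2,3\}$. So $c_2(\sE)=2\gamma^2e_i$ means precisely that the destabilising line bundle involves the two indices different from $i$. I therefore write
\[
0\to\sO_X(\gamma h_j-\gamma h_w)\to\sE\to\sO_X(-\gamma h_j+\gamma h_w)\to0,\qquad \gamma\neq0.
\]
Interchanging the roles of $j$ and $w$ replaces $\gamma$ by $-\gamma$. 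Hence the condition in the statement should be read as $|\gamma|=1$, and I may assume $\gamma>0$ whenever convenient.

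\textbf{Case $m=i$.} I twist the sequence by $h_i$. The two outer terms are $\sO_X(h_i+\gamma h_j-\gamma h_w)$ and $\sO_X(h_i-\gamma h_j+\gamma h_w)$. Since $\gamma\ne0$, each has a negative coefficient, so both have no sections. Hence $h^0(\sE(h_i))=0$ and $\sE$ is never $h_i$-'t Hooft.

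\textbf{Case $m=j$ (the case $m=w$ is symmetric).} Twisting by $h_j$, the outer terms are $\sO_X((1+\gamma)h_j-\gamma h_w)$ and $\sO_X((1-\gamma)h_j+\gamma h_w)$.
\begin{itemize}
\item The first has sections iff $\gamma+1\ge0$ and $-\gamma\ge0$, which with $\gamma\ne0$ means $\gamma=-1$.
\item The second has sections iff $\gamma\le 1$ and $\gamma\ge0$, which means $\gamma=1$.
\end{itemize}
If $|\gamma|\ge2$ both vanish, so $h^0(\sE(h_j))=0$. If $\gamma=-1$, the subbundle $\sO_X(-h_w)$ of $\sE(h_j)$ with $w$ and $j$ swapped, more precisely the sub line bundle of $\sE(h_j)$, already provides a section, so $H^0(\sE(h_j))\ne0$.

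\textbf{Expected obstacle: lifting sections when $\gamma=1$.} For $\gamma=1$ the section lives on the quotient, so one must check that it lifts. The obstruction lies in
\[
H^1(\sO_X(2h_j-h_w))=H^0(\sO_{\pp^1}(2))\otimes H^1(\sO_{\pp^1}(-1))\otimes H^0(\sO_{\pp^1})=0.
\]
Hence the quotient's section lifts and $H^0(\sE(h_j))\ne0$. This lifting check is the one point needing care; everything else is bookkeeping of signs. Combining the three cases gives: $\sE$ is $h_m$-'t Hooft iff $m\ne i$ and $|\gamma|=1$.
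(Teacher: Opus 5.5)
Your argument is correct and is essentially the paper's proof, which just twists the extension \eqref{ext1} by $\sO_X(h_m)$ and reads off $h^0$ from the long exact cohomology sequence. Your normalisation (the destabilising line bundle involves the two indices other than $i$, and ``$\gamma=1$'' means $\gamma=\pm1$) is the right way to make the paper's loose indexing consistent. The only slip is in the case $\gamma=-1$: the sub line bundle of $\sE(h_j)$ is $\sO_X(h_w)$, not $\sO_X(-h_w)$, and it is $\sO_X(h_w)$ that provides the nonzero section.
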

\begin{proof}
The result follows trivially by considering the long exact sequence induced in cohomology by Sequence \eqref{ext1} twisted by $\sO_X(h_j)$ for $j=1,2,3$.
\end{proof}

\section{Special 't Hooft bundles}

In this section we introduce the notion of special bundles.  
Given an effective divisor $D$, one may consider various subclasses of
$D$-'t Hooft bundles. On the one hand, one can fix another effective divisor
$B$ and require that an instanton bundle $\sE$ satisfy the 't Hooft
condition with respect to both $D$ and $B$. On the other hand, within the
class of $D$-'t Hooft bundles, one may single out those for which
$H^0(\sE(D))$ has maximal possible dimension. Motivated by this dichotomy, we introduce the following definitions (cf. \cite[Definition 7.1]{AMMP}).

\begin{definition}\label{dSpecial1}
We say that an instanton bundle is $(D_i,D_j)$-special if and only if it is both 't Hooft with respect to $D_i$ and $D_j$.
\end{definition}
\begin{definition}\label{dSpecial2}
We say that an instanton bundle is $(D,D)$-special or $D$-sectional special if and only if $h^0(\sE(D))>1$.
\end{definition}
Notice that in the case $D=h_i$, being $h_i$-sectional special is equivalent to require that $h^0(\sE(h_i))=2$ thanks to Proposition \ref{pBound}.

We start dealing with the $(D_i,D_j)$-special case, where $D_w=h_w$. For simplicity of notation let us discuss the $(h_1,h_2)$ case. Suppose $\sE$ is a $h_1$-'t Hooft bundle and tensor sequence \eqref{eh1tHooft} by $\sO_X(-h_1+h_2)$, then we get
\begin{equation}\label{eSpecial12}
0\to \sO_X(-h_1+h_2) \to \sE(h_2) \to \sI_{Y_1}(h_1+h_2) \to 0.
\end{equation}
From this, the next Proposition follows directly.
\begin{proposition}\label{pQuartic}
An instanton bundle $\sE$ is $(h_1,h_2)$-special if and only if is $h_1$-'t Hooft (resp. $h_2$-'t Hooft) and $Y_1$ (resp. $Y_2$) is contained in a quartic surface $S$ in $|h_1+h_2|$, where $Y_1=(s_1)_0$ and $s_1 \in H^0(\sE(h_1))$ (resp. $Y_2=(s_2)_0$ and $s_2 \in H^0(\sE(h_2))$).
\end{proposition}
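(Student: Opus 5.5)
The plan is to read everything off Sequence \eqref{eSpecial12} in cohomology, so that $h^0(\sE(h_2))$ is compared with $h^0(\sI_{Y_1}(h_1+h_2))$. By symmetry it is enough to treat the case where $\sE$ is $h_1$-'t Hooft and $Y_1=(s_1)_0$; the case with $Y_2$ is obtained by exchanging the roles of $h_1$ and $h_2$, twisting the analogue of \eqref{eh1tHooft} for $h_2$ by $\sO_X(h_1-h_2)$.

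First I would compute the cohomology of the sub-line bundle in \eqref{eSpecial12}. By the K\"unneth formula,
\[
H^p(\sO_X(-h_1+h_2))=\bigoplus H^a(\sO_{\pp^1}(-1))\otimes H^b(\sO_{\pp^1}(1))\otimes H^c(\sO_{\pp^1}),
\]
and this vanishes for every $p$ because $\sO_{\pp^1}(-1)$ is acyclic. The long exact sequence of \eqref{eSpecial12} therefore gives an isomorphism $H^0(\sE(h_2))\cong H^0(\sI_{Y_1}(h_1+h_2))$.

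Second, I would translate this into the statement. $H^0(\sE(h_2))\neq 0$ means precisely that $\sE$ is also $h_2$-'t Hooft, that is, $(h_1,h_2)$-special by Definition \ref{dSpecial1}. On the other side, $H^0(\sI_{Y_1}(h_1+h_2))\neq0$ holds exactly when some non-zero section of $\sO_X(h_1+h_2)$ vanishes on $Y_1$, i.e. when $Y_1$ lies in a surface $S\in|h_1+h_2|$. By Lemma \ref{lQuartics}, such an $S$ is either an irreducible quartic or a union of two quadrics. The converse direction comes from the same isomorphism, read backwards.

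The only point that needs care is interpreting "quartic surface" so that it includes the reducible members $Q_1\cup Q_2$ of $|h_1+h_2|$. Since $S$ has degree $(h_1+h_2)h^2=4$, these reducible members are also degree-four surfaces, so I expect this to cause no genuine obstacle. It would also be worth noting, as in the proof of Proposition \ref{pBound}, that $Y_1$ can never lie in a single quadric of $|h_1|$, because $h^0(\sE)=0$.
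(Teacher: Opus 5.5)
Your proposal is correct and follows the same route as the paper, which simply says the result "follows directly" from Sequence \eqref{eSpecial12}. You spell out the one step that needs checking: by K\"unneth, $\sO_X(-h_1+h_2)$ is acyclic, so $H^0(\sE(h_2))\cong H^0(\sI_{Y_1}(h_1+h_2))$. The case of $Y_2$ follows by the symmetric twist, as you note.
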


This strong geometric constraint allows for a complete classification of curves arising as vanishing locus of a section of $(h_1,h_2)$-special bundles.

\begin{theorem}\label{tSpecial}
Let $\sE$ be a $(h_1,h_2)$-special 't Hooft bundle, and let $Y_1$ be the zero locus of a section of $\sE(h_1)$. Then one of the following holds:
\begin{itemize}
    \item[(i)] $Y_1$ represents the class $ke_3$ with $k>0$ and it is the disjoint union of (possibly multiple) lines;
    \item [(ii)] $Y_1$ represents the class $k(e_1+e_2)$ and it is the disjoint union of (possibly multiple) conics;
    \item [(iii)] $Y_1$ represents the class $k_2e_2+k_3e_3$ with $k_2,k_3> 0$ and it is the disjoint union of (possibly multiple) lines;
    \item [(iv)] $Y_1$ represents the class $k_1e_1+k_3e_3$ with $k_i>0$ and it is the disjoint union of a rational curve with class $k_1e_1+e_3$ and (possibly multiple) lines;
    \item [(v)] $Y_1$ represents the class $k_1e_1+k_2e_2+e_3$ with $k_i>0$ and it is the disjoint union of a rational curve with class $k_1e_1+e_3$ and (possibly multiple) lines.
\end{itemize}
Moreover in case $(iii)$, $(iv)$ and $(v)$ the associated quartic is the reducible union of two quadrics.
\end{theorem}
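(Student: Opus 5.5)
The plan is to combine Proposition \ref{pQuartic} with Theorem \ref{tCurves}. Since $\sE$ is $(h_1,h_2)$-special, Proposition \ref{pQuartic} gives a surface $S\in|h_1+h_2|$ with $Y_1\subset S$. By Theorem \ref{tCurves}, $Y_1$ is a disjoint union of primitive extensions of type $\sO_C$ of smooth rational curves $C$ of class $ae_1+e_2$ or $ae_1+e_3$, $a\ge0$. By Lemma \ref{lQuartics}, either $S$ is irreducible, $S\cong C_0\times\pp(V_3)$ with $C_0\subset \pp(V_1)\times\pp(V_2)$ a smooth conic, or $S=Q_1\cup Q_2$ with $Q_i\in|h_i|$. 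I treat the two cases separately and, in each, list which components $C$ can lie on $S$.

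\emph{Irreducible $S$.} Here $\Pic(S)=\langle l,m\rangle$ with $h_1,h_2\mapsto l$ and $h_3\mapsto m$. A curve of class $ae_1+e_2$ has intersection numbers $(h_1,h_2,h_3)\cdot C=(a,1,0)$; on $S$ these are given by $C\cdot l$ and $C\cdot m$, so we need $a=1$. Similarly $ae_1+e_3$ forces $(a,0,1)$ and hence $a=0$. So the components are either conics of class $e_1+e_2$ (fibres $C_0\times\{pt\}$, class $l$ on $S$) or lines of class $e_3$ (the rulings $\{pt\}\times\pp(V_3)$). These two families meet, since $l\cdot m=1$. Disjointness of the components of $Y_1$ then forces all components to be of a single type, which gives (i) or (ii) (conics being counted with multiplicity). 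The multiple structures must be primitive of type $\sO_C$; one should check they lie in $S$, but it suffices to take the reduced support and the class. This yields $ke_3$ or $k(e_1+e_2)$.

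\emph{Reducible $S=Q_1\cup Q_2$.} Each connected component lies in $Q_1$ or $Q_2$, possibly after a thickening argument, since the supports are irreducible. On $Q_1$ we have $h_1\mapsto 0$, so a curve of class $ae_1+e_i$ with $h_1\cdot C=a$ must have $a=0$: these are lines of class $e_2$ or $e_3$. On $Q_2$ we have $h_2\cdot C=0$, which excludes the class $ae_1+e_2$ and leaves $ae_1+e_3$ with any $a$. Here $Q_2\cong\pp(V_1)\times\pp(V_3)$, and by Proposition \ref{pCompleteIntersection} curves of class $ae_1+e_3$ on a quadric in $|h_2|$ are complete intersections. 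Now use disjointness on $Q_2$. Two curves of classes $ae_1+e_3$ and $be_1+e_3$ on $Q_2$ intersect in $a+b$ points, so at most one such curve has $a>0$, and that curve is disjoint from a line of class $e_3$ only if $a=0$; a line of class $e_1$ does not occur. Collecting the possibilities gives (iii), (iv), (v), with lines of classes $e_2,e_3$ on $Q_1$ and at most one rational curve $k_1e_1+e_3$ on $Q_2$. The rational curve must also avoid the $Q_1$-lines lying in $Q_1\cap Q_2$, which is a line of class $e_3$. Whenever a curve of class $e_2$ appears, or a curve with $a>0$, the quartic cannot be irreducible by the previous case, which gives the final claim.

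The main obstacle is this bookkeeping in the reducible case: showing that only one curve with $a>0$ can occur, and reconciling the count of $e_2$ and $e_3$ lines with the stated classes. In (v) only $e_3$ with coefficient $1$ survives, which suggests that lines of class $e_3$ on $Q_1$ must meet the big curve on $Q_2$ through $Q_1\cap Q_2$; I expect to prove this using intersections with the line $Q_1\cap Q_2$. I would also use Proposition \ref{pBound}-style arguments to rule out $Y_1$ lying in a single quadric, since that would give $h^0(\sE)\neq0$.
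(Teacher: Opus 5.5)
Your overall route is the paper's: Proposition \ref{pQuartic} puts $Y_1$ on some $S\in|h_1+h_2|$, Theorem \ref{tCurves} restricts the supports to smooth rational curves of class $ae_1+e_2$ or $ae_1+e_3$, and you split according to Lemma \ref{lQuartics}. The irreducible case is fine. The conics $C_0\times\{pt\}$ actually have class $m$, not $l$, on $S$, but this is harmless.

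The gap is in the reducible case, at exactly the step you single out as the main obstacle, and the fix you propose cannot work. As written, you allow lines of both classes $e_2$ and $e_3$ on $Q_1$ together with a curve of class $k_1e_1+e_3$ on $Q_2$. That configuration produces classes $k_1e_1+k_2e_2+k_3e_3$ with $k_3\ge 2$, which are not in the list. You hope to exclude them by showing that $e_3$-lines on $Q_1$ must meet the big curve through $Q_1\cap Q_2$, but that claim is false. Write $Q_1=\{p_1\}\times\pp(V_2)\times\pp(V_3)$ and $Q_2=\pp(V_1)\times\{p_2\}\times\pp(V_3)$. Then an $e_3$-line $\{p_1\}\times\{r\}\times\pp(V_3)$ with $r\neq p_2$ does not meet $Q_2$ at all. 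Case (iv) consists exactly of such lines together with the curve of class $k_1e_1+e_3$.

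The missing observation is simpler. On $Q_1\cong\pp(V_2)\times\pp(V_3)$ the $e_2$-lines and the $e_3$-lines are the two rulings, so every $e_2$-line meets every $e_3$-line. Disjointness therefore forces the lines on $Q_1$ to be all of class $e_2$ or all of class $e_3$. Combine this with what you already proved on $Q_2$: either $e_3$-lines, or a single curve of class $k_1e_1+e_3$ with $k_1>0$ and nothing else. Note also that an $e_2$-line of $Q_1$ meets $Q_1\cap Q_2$ in one point, so it can be chosen off the big curve, while $e_3$-lines of $Q_2$ other than $Q_1\cap Q_2$ miss $Q_1$. The four combinations then give exactly (i), (iii), (iv), (v). In (v) the only $e_3$-contribution is the big curve itself.

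Finally, the vanishing $h^0(\sE)=h^0(\sI_{Y_1}(h_1))=0$ rules out $Y_1$ lying on a quadric of $|h_1|$ only. This is what excludes the class $k_2e_2$, whose lines would all have to lie on $Q_1$. $Y_1$ lying on a single quadric of $|h_2|$ must be allowed, e.g. (iv) with $k_3=1$, as for the conic of class $e_1+e_3$ in Proposition \ref{pVerySpecial}.
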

\begin{proof}
The proof boils down to understand which of the curves appearing in Theorem \ref{tCurves} can be contained in a surface $S$ in the linear system $|h_1+h_2|$. Notice that such $S$ can either be irreducible (and smooth, thanks to Lemma \ref{lQuartics}) or the reducible union of two quadrics $Q_1$ and $Q_2$. Let us start by dealing with the irreducible case: $S$ has two rulings, one made by conics representing the class $e_1+e_2$ in $X$ and the class $m$ in $S$ and the other made by lines of type $e_3$ in $X$ and $\ell$ in  $S$. Notice that any irreducible rational curve in $S$ represents the class $e_1+e_2+\alpha e_3$ or $\beta(e_1+e_2)+e_3$ for some $\alpha$ and $\beta$ in $\mathbb{Z}_{\ge0}$, thus the only curves described in Theorem \ref{tCurves} contained in $S$ are the ones with $\alpha=0$ or $\beta=0$. Notice that since they are the generators of the two rulings we can either take in $S$ an element in $|k\ell|$ or an element  in $|k m|$, so this proves $(i)$ and $(ii)$. Now we deal with the reducible case, so $S=Q_1\cup Q_2$ with $Q_i\in|h_i|$. Due to Lemma \ref{lQuadrics}, $Q_1$ has two rulings generated by lines $e_2$ and $e_3$, while the rulings of $Q_2$ are generated by $e_1$ and $e_3$. In particular, an irreducible rational curve in $Q_1$ has either class $\alpha e_2+e_3$ or $e_2+\beta e_3 $ for some $\alpha,\beta\ge0$, while rational curves in $Q_2$ represent the classes $\gamma e_1+e_3$ or $e_1+\delta e_3$ for some $\gamma,\delta \ge 0$. Because of this, $(iii)$, $(iv)$ and $(v)$ now follow from Theorem \ref{tCurves}.
\end{proof}
\begin{remark}
Theorem \ref{tSpecial} provides a numerical condition to check (non-)speciality of an instanton bundle. Indeed, suppose $\sE$ is a $h_1$-'t Hooft bundle and let $s\in H^0(\sE(h_1))$ be a global section. Then if $Y_1=(s)_0$ represents $k_1e_1+k_2e_2+k_3e_3$ in $A^2(X)$, with $k_i >1$ for all $i$, then $\sE$ is neither $(h_1,h_2)$-special nor $(h_1,h_3)$-special.
\end{remark}
It is natural to ask whether a $h_1$-'t Hooft bundle can admit sections when twisted by both $h_2$ and $h_3$, i.e.  is there an instanton bundle such that $h^0(\sE(h_i))>0$ for all $i$?

\begin{proposition}\label{pVerySpecial}
Let $\sE$ be a $h_1$-'t Hooft bundle of charge $k$. Then $\sE$ is $(h_1,h_2)$-special and $(h_1,h_3)$-special if and only if $k=2$ and $c_2(\sE)=e_i+e_j$ with $i \neq j$.
\end{proposition}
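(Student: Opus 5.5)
The plan is to apply Theorem \ref{tSpecial} twice, once for the pair $(h_1,h_2)$ and once for $(h_1,h_3)$. Fix a nonzero $s\in H^0(\sE(h_1))$ and put $Y_1=(s)_0$, so $[Y_1]=c_2(\sE)=k_1e_1+k_2e_2+k_3e_3$ and $Y_1$ is a disjoint union of primitive extensions as in Theorem \ref{tCurves}.

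\emph{Necessity.} Since $\sE$ is $(h_1,h_2)$-special, the class of $Y_1$ must be one of the five types (i)--(v) of Theorem \ref{tSpecial}. Exchanging $h_2$ and $h_3$ in the statement and proof (and using Proposition \ref{pQuartic} with $|h_1+h_3|$), the class must also be one of the mirrored types for the pair $(h_1,h_3)$. These are: $ke_2$; $k(e_1+e_3)$; $k_2e_2+k_3e_3$ with both $k_2,k_3>0$; $k_1e_1+k_2e_2$ with a rational component of class $k_1e_1+e_2$; and $k_1e_1+e_2+k_3e_3$.

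Intersecting the two lists leaves only a few possibilities.
\begin{itemize}
\item Pure multiples of a single $e_j$ with $j\neq 1$ cannot occur in both lists.
\item Classes $k(e_1+e_2)$ and $k(e_1+e_3)$ are never in the other list, except possibly via types (iv)/(v). Matching $k(e_1+e_2)$ with $k_1e_1+e_2+k_3e_3$ forces $k=1$, i.e.\ class $e_1+e_2$.
\item Type (iii) lies in both lists: $Y_1$ is a disjoint union of lines of classes $e_2$ and $e_3$ with $k_2,k_3>0$.
\item Type (v), $k_1e_1+k_2e_2+e_3$, compared with the mirrored (v), $k_1e_1+e_2+k_3e_3$, forces $k_2=k_3=1$, giving $k_1e_1+e_2+e_3$.
\end{itemize}

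The step I expect to be the main obstacle is making the surviving cases collapse to charge $2$. The classification only constrains classes, so geometry is needed. Consider type (iii). $Y_1$ must lie in a surface $Q_1\cup Q_2\in|h_1+h_2|$. Lines of class $e_3$ lie in $Q_1$ or $Q_2$, while lines of class $e_2$ (projecting to a point in $\pp(V_1)\times\pp(V_3)$) must lie in $Q_1$. Symmetrically, $Y_1$ lies in some $Q_1'\cup Q_3$. Disjoint lines of class $e_2$ in $Q_1$ are distinct $h_3$-fibres, and each meets every $Q_3$. So if $k_2\ge2$, all but at most one $e_2$-line must be contained in $Q_1'$, which forces $Q_1'=Q_1$. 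By symmetry the $e_3$-lines then lie in $Q_1$ too, or $k_3=1$. If all of $Y_1$ lies in one quadric $Q_1\in|h_1|$, then $h^0(\sI_{Y_1}(h_1))\neq0$, hence $h^0(\sE)\neq0$ by \eqref{eh1tHooft}, a contradiction. This should force $k_2=k_3=1$, i.e.\ $c_2=e_2+e_3$, after also handling multiple structures by the same support argument. The same reasoning applied to $e_1+e_2+e_3$-type configurations (the rational curve of class $k_1e_1+e_2$ sits in $Q_2$, that of class $k_1e_1+e_3$ in $Q_3$) should rule out $k_1\ge1$ together with both extra lines, leaving $e_1+e_2$ and $e_1+e_3$. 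In every case this gives $k=2$ and $c_2=e_i+e_j$.

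\emph{Sufficiency.} Let $k=2$ with $c_2=e_i+e_j$. Here $\sE(h)$ is Ulrich with $h^1(\sE)=0$, so $\chi(\sE(h_w))=h^0(\sE(h_w))$ for each $w$, using vanishing of $h^2,h^3$ by Serre duality and stability. By \eqref{RR} with $c_1=2h_w$ and $c_2=e_i+e_j$, I expect $\chi(\sE(h_w))\geq1$ for every $w$. This gives sections of all three twists, completing the proof.
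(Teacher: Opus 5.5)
Your strategy is the paper's: go through the configurations of Theorem \ref{tSpecial}, test each against containment in a member of $|h_1+h_3|$ (equivalently, against the list obtained by exchanging $h_2$ and $h_3$), and get sufficiency from \eqref{RR}. However, the class-matching step is wrong for case (ii).

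The class $k(e_1+e_2)$ cannot match the mirrored type (v), $k_1e_1+e_2+k_3e_3$, at all, because that type has $k_3>0$. It \emph{does} match the mirrored type (iv), $k_1e_1+k_2e_2$, with $k_1=k_2=k$, for every $k\ge1$. Symmetrically, $k(e_1+e_3)$ matches the original type (iv) for every $k$. So classes alone do not force $k=1$. Your geometric paragraph only treats types (iii) and (v), so the necessity argument as written does not exclude, say, $c_2(\sE)=2e_1+2e_2$ with $Y_1$ two disjoint conics or a double conic.

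The missing step is geometric and short. One option is to compare supports. In the mirrored type (iv), the whole $e_1$-part of $Y_1$ is carried by a single reduced component of class $ke_1+e_2$. In type (ii), every reduced component is a conic of class $e_1+e_2$. Hence $k=1$.

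The other option is the paper's argument. A conic of class $e_1+e_2$ projects under $\pi_{13}$ onto a ruling line. So it lies in no irreducible member of $|h_1+h_3|$ (whose rulings are $e_2$ and $e_1+e_3$) and in no member of $|h_1|$. Hence all components of $Y_1$ lie in one $Q_3\in|h_3|$. There, two $(1,1)$-curves meet in two points, and a double structure on a conic $C$ has type $\sO_C(-C)\cong\sO_{\pp^1}(-2)\neq\sO_C$. Thus $Y_1$ is a single conic, and likewise in case (iv) it is a single conic of class $e_1+e_3$.

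Smaller points:
\begin{enumerate}
\item By Proposition \ref{pCompleteIntersection}, a curve of class $k_1e_1+e_2$ lies in a member of $|h_3|$, and one of class $k_1e_1+e_3$ in a member of $|h_2|$; you swapped them.
\item Type (v) dies immediately: the unique component with non-zero $e_1$-coefficient would have to have both classes $k_1e_1+e_3$ and $k_1e_1+e_2$.
\item In the sufficiency part, Ulrichness of $\sE(h)$ says nothing about $h^1(\sE(h_w))$, since $h_w$ is not a multiple of $h$. Also, $h^2(\sE(h_w))=h^1(\sE(-h_w-2h))=0$ is not a consequence of stability. You only need $h^2=h^3=0$ to get $h^0(\sE(h_w))\ge\chi(\sE(h_w))=2-k_w>0$, and the $h^2$-vanishing follows from the monad of Theorem \ref{tMonad}.
\end{enumerate}
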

\begin{proof}
Let us start by supposing that $\sE$ is both $(h_1,h_2)$-special and $(h_1,h_3)$-special. Thanks to Proposition \ref{pQuartic} we need to show which are the cases appearing in Theorem \ref{tSpecial} which are contained in an element $S$ of $|h_1+h_3|$. We will do a case by case analysis. As per usual, we denote by $Y_1$ the zero locus of a section of $\sE(h_1)$. Notice that case $(i)$ cannot be contained in any $S\in|h_1+h_3|$, since if $S$ is irreducible it has two rulings generated by $e_2$ and $e_1+e_3$, while if it is reducible and contains $Y_1$ then the curve $Y_1$ would be contained in a quadric $h_1$, contradicting the semistability of $\sE$. A totally analogous argument yields that case $(v)$ cannot occur. Case $(ii)$ can only be admissible if $Y_1$ is a unique conic representing $e_1+e_2$ since it is contained in a irreducible quartic $S'\in |h_1+h_2|$ and in a reducible quartic $S=Q_1\cup Q_3 \in |h_1+h_3|$. Similarly, case $(iii)$ can only occur if $Y_1$ is the disjoint union of two lines $e_2$ and $e3$ it is also contained in a reducible quartic $S=Q_1\cup Q_3$. Finally case $(iv)$ can only be obtained when $Y_1$ is an irreducible conic representing $e_1+e_3$, since this conic is contained in a quadric $Q_2\in|h_2|$ (thus also in a reducible quartic in $|h_1+h_2|$) and in a irreducible quartic $S\in |h_1+h_3|$ since it generates one of its rulings.

For the inverse implication, Formula \eqref{RR} returns $\chi(\sE(h_i))>0$ for all $i$. The result then follows by noticing that $h^j(\sE(h_i))=0$ for $i=1,2,3$ and all $j>1$.
\end{proof}
Now we turn our attention to $D$-sectional special bundles, where $D=h_i$. Similarly to the $(h_1,h_2)$-special case, the curve $Y_1=(s)_0$ with $s \in H^0(\sE(h_1))$ is contained in a quartic surface (see Corollary \ref{cQuarticSectional}). Thanks to this observation, we are able to classify the curves associated with $h_i$-sectional special bundles.

\begin{theorem}\label{tSpecialSectional}
Let $\sE$ be a $h_1$-sectional special instanton bundle. Then $c_2(\sE)=k_2e_2+k_3e_3$ and the zero locus of a section of $\sE(h_1)$ is the disjoint union of lines.
\end{theorem}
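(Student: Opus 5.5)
By Corollary \ref{cQuarticSectional}, if $\sE$ is $h_1$-sectional special and $s\in H^0(\sE(h_1))$, then $Y_1=(s)_0$ lies in a (unique) surface $S\in|2h_1|$. The plan is to combine this with the description of $Y_1$ in Theorem \ref{tCurves}, and to use $H^0(\sE)=0$ to forbid $Y_1$ from lying in a single quadric of $|h_1|$.

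First I recall what elements of $|2h_1|$ look like. Such an $S$ is the pullback under $\pi_1$ of a degree-two divisor on $\pp(V_1)$. So either $S=Q\cup Q'$ with $Q,Q'\in|h_1|$ distinct (hence disjoint), or $S=2Q$ is a double structure on one quadric $Q\in|h_1|$. In both cases $S\cdot h_1=0$. Hence every curve $Z\subset S$ has $Z\cdot h_1=0$, since each irreducible component of $Z$ lies in a fibre of $\pi_1$.

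Next I apply Theorem \ref{tCurves}. It says $Y_1$ is a disjoint union of primitive extensions of type $\sO_C$ on smooth rational curves $C$ of class $ae_1+e_i$, with $i\in\{2,3\}$ and $a\ge0$. The support of each component must lie in $S_{red}$, so $C\cdot h_1=0$. Since $e_1\cdot h_1=h_1h_2h_3\cdot h_1$-degree equals $1$ and $e_2h_1=e_3h_1=0$, we get $(ae_1+e_i)h_1=a$, which forces $a=0$. Thus each $C$ is a line of class $e_2$ or $e_3$ contained in a quadric of $|h_1|$. Summing over components, $Y_1$ is a disjoint union of (possibly multiple) lines, and its class $c_2(\sE)=[Y_1]$ has no $e_1$-term, i.e. $c_2(\sE)=k_2e_2+k_3e_3$. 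This is just $k_1=Y_1\cdot h_1=0$.

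The main obstacle is the non-reduced case $S=2Q$. There a multiple structure $Y$ on a line $C\subset Q$ need not be contained in $Q$ itself, only in the double quadric, so one must check that the primitive thickenings do not escape the argument. The reduction to the support handles this: containment $Y\subset 2Q$ still forces $C=Y_{red}\subset Q$, and the class computation above only uses the reduced support together with multiplicities, since $[Y]=\mathrm{mult}_C(Y)[C]$. The argument in the proof of Proposition \ref{pBound} already notes that only curves in the support, or their thickenings, lie in a double quadric. I would invoke that remark here.

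Finally, I would note for consistency that $Y_1$ cannot lie in a single quadric $Q\in|h_1|$, since then $H^0(\sE)\cong H^0(\sI_{Y_1}(h_1))\neq0$. This is not needed for the stated conclusion, though it shows that in the reduced case the lines are spread over both quadrics.
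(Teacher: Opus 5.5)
Your proposal is correct and follows essentially the same route as the paper: Corollary \ref{cQuarticSectional} places $Y_1$ in some $S\in|2h_1|$, curves in $S$ have no $e_1$-component, and Theorem \ref{tCurves} then forces each support to be a line of class $e_2$ or $e_3$. Your treatment of the double quadric $2Q$ through the reduced support and multiplicities is more explicit than the paper's ``analogous'' remark. One typo to fix: you mean $e_1\cdot h_1=h_1h_2h_3$, which has degree $1$, not $h_1h_2h_3\cdot h_1$.
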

\begin{proof}
From the proof of Proposition \ref{pBound} it follows that $\sE$ is $h_1$-sectional special if and only if the zero locus $Y_1$ of a section of $\sE(h_1)$ is contained in a quartic surface $S\in|2h_1|$. Again, we deal with the reduced case, the multiple case being analogous. In particular $S=Q_1\cup Q_2$ is the disjoint union of two quadrics. Since a curve in $S$ is represented in the Chow ring by $k_2e_2+k_3e_3$, it follows that $c_2(\sE)=k_2e_2+k_3e_3$. Then Theorem \ref{tCurves} yields that $(Y_1)_{red}$ is the disjoint union of lines having class $e_2$ or $e_3$, and lines representing different class should lie in different quadrics.
\end{proof}

Given the rich geometry and Chow ring structure of $X$, which give rise to these two notions of speciality, we investigate whether the concepts introduced in Definitions \ref{dSpecial1} and \ref{dSpecial2} can actually coincide.
\begin{proposition}\label{pDoubleSpecial}
Let $\sE$ be a $(h_1,h_2)$-special instanton of charge $k$ and let $Y_1=(s)_0$ with $s\in H^0(\sE(h_1))$. Then
\begin{itemize}
\item $\sE$ is both $h_1$-sectional special and $h_2$-sectional special if and only if $Y_1$ is the disjoint union of two lines in the family $e_3$.
\item $\sE$ is $h_1$-sectional special but not $h_2$-sectional special if and only if $Y_1$ represents the class $Y_1=ke_2+e_3$ with $k\ge1$ or $Y_1=ke_3$ with $k\ge 3$ and it is the disjoint union of (possibly multiple) lines with $k-1$ of them lying on a quadric $Q\in|h_1|$. 
\item $\sE$ is $h_2$-sectional special but not $h_1$-sectional special if and only if $Y_1$ is an irreducible rational curve representing the class $ke_1+e_3$ with $k\ge 1$ or $Y_1=ke_3$ with $k\ge 3$ and $Y_1$ is contained in a quadric $Q\in|h_2|$.
\end{itemize}
\end{proposition}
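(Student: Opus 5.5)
The approach is a case-by-case reading of Theorem \ref{tSpecial}, using Corollary \ref{cQuarticSectional} to translate each kind of sectional speciality into the condition that a curve lies in a surface of $|2h_1|$ or $|2h_2|$. Such a surface is always a union of two disjoint quadrics of the same family, possibly coinciding.

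\textbf{$h_1$-sectional speciality.} By Theorem \ref{tSpecialSectional}, $\sE$ is $h_1$-sectional special only if $c_2(\sE)=k_2e_2+k_3e_3$ and $Y_1$ is a disjoint union of (possibly multiple) lines. This leaves cases $(i)$ and $(iii)$ of Theorem \ref{tSpecial}.

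\textbf{$h_2$-sectional speciality.} Let $s_2\in H^0(\sE(h_2))$ and $Y_2=(s_2)_0$. By the symmetric statement, $\sE$ is $h_2$-sectional special only if $c_2(\sE)=k_1e_1+k_3e_3$ and $Y_2$ is a union of lines in two quadrics of $|h_2|$. Since $Y_1$ and $Y_2$ both represent $c_2(\sE)$, this forces $k_2=0$, which leaves cases $(i)$, $(ii)$ with $k=1$, and $(iv)$. We then check which configurations of $Y_1$ lie in a surface of $|2h_2|$. Sequence \eqref{eSpecial12} gives the relation
\[
h^0(\sE(h_2))=h^0(\sI_{Y_1}(h_1+h_2)),
\]
because $h^0(\sO_X(-h_1+h_2))=h^1(\sO_X(-h_1+h_2))=0$. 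So $h_2$-sectional speciality amounts to $h^0(\sI_{Y_1}(h_1+h_2))=2$, i.e.\ $Y_1$ lies in a pencil of quartics in $|h_1+h_2|$.

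\textbf{Counting quartics, and the expected main obstacle.} The core step is to compute $h^0(\sI_{Y_1}(h_1+h_2))$ and $h^0(\sI_{Y_1}(2h_1))$ for each configuration of Theorem \ref{tSpecial}. Lines of class $e_3$ are fibres of $\pi_{12}$, so containment conditions reduce to point conditions on $\pp^1\times\pp^1$. A line of class $e_3$ imposes one condition on $|h_1+h_2|\cong\pp^3$ (the conics through a point) and one condition on $|2h_1|$ (its $\pi_1$-image). For $Y_1=ke_3$ we need the $k$ points in $\pp^1\times\pp^1$ to impose few conditions. Thus $h^0(\sI_{Y_1}(2h_1))=1$ requires the points to have at most two distinct $\pi_1$-coordinates. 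Combined with the condition on conics and with $h^0(\sE)=0$ (no curve lies in a single quadric), this gives:
\begin{itemize}
\item exactly two lines, where both counts are $2$ by Proposition \ref{pBound}: the first bullet;
\item $k\ge 3$ with $k-1$ lines on one $Q\in|h_1|$: $h_1$-sectional but not $h_2$-sectional;
\item symmetrically, $k-1$ lines on one $Q\in|h_2|$: $h_2$-sectional but not $h_1$-sectional.
\end{itemize}
I would verify these dimension counts directly on $\pp^1\times\pp^1$. Here Lemma \ref{lQuadrics} is used, and the multiple-line case is handled through the reduced support as in Proposition \ref{pBound}.

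\textbf{Remaining cases.} For case $(iii)$ with $k_2\ge1$, the $e_2$-lines lie in a single $h_1$-quadric and the $e_3$-lines in the other, giving class $ke_2+e_3$ after the semistability constraint. For case $(iv)$, the rational curve of class $ke_1+e_3$ lies in a unique $Q\in|h_2|$, so $Y_1$ sits in $2Q$ or in $Q\cup Q'$. The delicate point I expect is making these counts precise, and in particular excluding mixed cases using $h^0(\sI_{Y_1}(h_1))=h^0(\sE)=0$.
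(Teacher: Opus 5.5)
Your skeleton matches the paper's: go through the cases of Theorem \ref{tSpecial}, detect $h_1$-sectional speciality by Corollary \ref{cQuarticSectional} applied to $Y_1$, and detect $h_2$-sectional speciality by $h^0(\sE(h_2))=h^0(\sI_{Y_1}(h_1+h_2))$ from \eqref{eSpecial12}. The gap is that on the $h_2$ side you then argue by symmetry, and seen from $Y_1$ the two sides are not symmetric. Two things break.

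\textbf{Wrong test.} Asking whether $Y_1$ lies in a member of $|2h_2|$ (as you propose, and as you do in case (iv) with ``$2Q$ or $Q\cup Q'$'') is not a valid criterion. Corollary \ref{cQuarticSectional} for $h_2$ is a statement about $Y_2=(s_2)_0$, not about $Y_1$.

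\textbf{Wrong constraint.} $h^0(\sE)=0$ does not say that $Y_1$ lies in no quadric. It only gives $h^0(\sI_{Y_1}(h_1))=0$. Twisting \eqref{eh1tHooft} by $\sO_X(-2h_1+h_2)$ links $h^0(\sI_{Y_1}(h_2))$ to $\sE(-h_1+h_2)$ only through $h^1(\sO_X(-2h_1+h_2))=2\neq 0$. So $Y_1$ may lie on an $h_2$-quadric, and the third bullet is exactly that situation.

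Consequently your third item is false. Take the $e_3$-lines over $(a_1,b),(a_2,b),(a_1,b')$ in $\pp(V_1)\times\pp(V_2)$, with $a_1\neq a_2$ and $b\neq b'$. Two of the three lines lie on $\pi_2^{-1}(b)$, and $Y_1\subset \pi_2^{-1}(b)\cup\pi_2^{-1}(b')\in|2h_2|$. Yet the only curve of class $\ell+m$ through the three points is $(\pp^1\times\{b\})\cup(\{a_1\}\times\pp^1)$, so $h^0(\sE(h_2))=1$. Moreover this $Y_1$ lies in $\pi_1^{-1}(a_1)\cup\pi_1^{-1}(a_2)$, so it is $h_1$-sectional special. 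It therefore falls under your second item, contradicting your third.

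\textbf{Fix.} Use only the criterion $h^0(\sI_{Y_1}(h_1+h_2))=2$ and argue on $\pp(V_1)\times\pp(V_2)$. Suppose $k\ge 3$ image points lie on a pencil of curves of class $\ell+m$. Since $(\ell+m)^2=2$, the pencil has a fixed line of one ruling, and the residual pencil is the whole other ruling. So the base locus is that line, and all points lie on it. A fixed line $\{a\}\times\pp^1$ would put $Y_1$ in an $h_1$-quadric, which is excluded. Hence all $k$ lines (not $k-1$) lie in one $Q\in|h_2|$. Conversely, $Y_1\subset Q$ gives the pencil $Q+|h_1|$. This is exactly the condition in the statement.

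\textbf{Smaller points.}
\begin{itemize}
\item Case (ii) of Theorem \ref{tSpecial} is excluded outright once $k_2=0$, since its class has $k_2=k\ge1$; it is not kept ``with $k=1$''.
\item In case (iii), the reason for $k_3=1$ is not semistability. The $e_3$-lines cannot lie on the $h_1$-quadric carrying the $e_2$-lines, because they would meet them. So they lie on the intersection of the $h_2$-quadric of the special quartic with the second $h_1$-quadric, which is a single line.
\item In case (iv), you must discard the extra $e_3$-lines that Theorem \ref{tSpecial} allows. The rational curve of class $k_1e_1+e_3$ lies in $Q_2=\pi_2^{-1}(b)$ and meets every $e_3$-line of $Q_2$. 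So an extra line lies over some $(a',b')$ with $b'\neq b$. Then $Q_2\cup\pi_1^{-1}(a')$ is the only member of $|h_1+h_2|$ containing $Y_1$, so $h^0(\sE(h_2))=1$. This is why irreducibility appears in the third bullet.
\end{itemize}
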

\begin{proof}
The proof follows the same lines as the one in Proposition \ref{pVerySpecial}. Suppose that the quartic $S\in |h_1+h_2|$ containing $Y_1$ is irreducible. Proposition \ref{pBound} yields that $\sE$ is $h_1$-sectional special if $Y_1$ is contained in the intersection between $S$ and the disjoint union of two quadrics in the linear system $|h_1|$. However, each quadric intersects $S$ along a line in the family $e_3$, so we get item $(i)$. Now suppose that $S$ is reducible. Case $(i)$ of Theorem \ref{tSpecial} yields that $Y_1$ is the disjoint union of (possibly multiple) lines representing $e_3$. If $\sE$ is $h_1$-sectional special, Corollary \ref{cQuarticSectional} gives us that $Y_1$ is contained in a disjoint union of two quadrics in $|h_1|$. In particular we have a single line on a quadric and the remaining ones lie on the other quadric. As soon as $Y_1$ represents $ke_3$ with $k\ge 3$ then $Y_1$ cannot be contained in the union of two quadrics in $|h_2|$ because quadrics in different linear systems intersect along a single line.

The case $(iii)$ of Theorem \ref{tSpecial} implies that $Y_1$ represents $k_2e_2+k_3e_3$ and its reduced structure is the disjoint union of lines. But the disjoint union of two quadrics in the linear system $|h_1|$ contains $Y_1$ only if $k_3=1$.

To finish the proof one should observe that if case $(iv)$ of Theorem \ref{tSpecial} occurs, then $\sE$ can only be $h_2$-sectional special and it is never $h_1$-sectional special. Indeed in this case $Y_1$ is an irreducible rational curve representing the class $ke_1+e_3$ for some $k\ge 1$. Such a curve is contained in a quadric $Q_2\in |h_2|$. In particular it is contained in a pencil of reducible surfaces in $|h_1+h_2|$ of affine dimension two. Twisting Sequence \eqref{eh1tHooft} by $\sO_X(-h_1+h_2)$ we get $h^0(\sE(h_2))=h^0(\sI_{Y_{1}}(h_1+h_2))=2$. To obtain the last case, we notice that the only other possible curve which can be contained in a quadric in $|h_2|$ is the disjoint union of $k$ (possibly multiple) lines in the family $e_3$.
\end{proof}
\begin{remark}\label{rDoubleSpecial}
In light of Proposition \ref{pDoubleSpecial} and Theorem \ref{tSpecialSectional} it is possible to find both $h_1$-sectional special and $h_2$-sectional special which are not $(h_1,h_2)$ special. For instance one can consider the $Y_1$ as the union of four lines representing $2e_2+2e_3$ in the Chow ring. We can choose the lines in the family $e_2$ in a quadric $Q_1 \in |h_1|$ and the ones in the family $e_3$ in a disjoint quadric $Q_1'$ in the same linear system. Then the bundle associated to $Y_1$ via Proposition \ref{pSerre} is not $(h_1,h_2)$-special since it is not contained in any quartic in the linear system $|h_1+h_2|$. Indeed since $2e_3$ is contained $Q_1'$, it cannot lie on a quadric in $Q_2\in|h_2|$, since $Q_1'\cap Q_2$ is just a line.
\end{remark}

We end this section by characterizing $h_1$-sectional special instantons  and $(h_1,h_2)$-special instantons via their dependency quartic.

\begin{proposition}\label{pDependencySectional}
Let $\sE$ be a $h_1$-sectional special instanton bundle. The dependence locus of two independent sections $s_1 \in H^0(\sE(h_1))$, $s_2\in H^0(\sE(h_1))$ is a reducible quartic surface $S \subset F$ in the linear system $|2h_1|$. Moreover $\sE(h_1)$ sits in the short exact sequence
\begin{equation}\label{eSectionalEvaluation}
0 \to \sO_X^{\oplus 2} \to \sE(h_1) \to \sO_{S}(-D_1)\to 0.
\end{equation}
where $D_1$ is the divisor associated to the zero locus $Y_1$ of $s_1$ in $\Pic(S)$. The surface $S$ is called the dependency quartic of $\sE$.
\end{proposition}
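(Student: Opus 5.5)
We work with the evaluation map $\mathrm{ev}\colon \sO_X^{\oplus 2}\to \sE(h_1)$ given by $(s_1,s_2)$, where $s_1,s_2$ span $H^0(\sE(h_1))$ (Proposition \ref{pBound}). We first show that $\mathrm{ev}$ is injective and that its cokernel is supported on the degeneracy locus $S=\{s_1\wedge s_2=0\}$. Then we identify $S$ with the surface of $|2h_1|$ in Corollary \ref{cQuarticSectional}, and finally we identify the cokernel as a sheaf on $S$.

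\textbf{Injectivity and the class of $S$.} The wedge $s_1\wedge s_2$ is a section of $\det\sE(h_1)=\sO_X(2h_1)$. It is nonzero: otherwise $s_1,s_2$ would generate a rank one saturated subsheaf $\sO_X(B)\subset \sE(h_1)$ with $h^0(\sO_X(B))\ge 2$. Then $B$ is effective and nonzero, so $\sE(h_1-B)$ has a section with $h(h_1-B)\le 0$, i.e.\ $\deg(h_1-B)\le 0$. This contradicts Proposition \ref{pHoppe}, given that $\sE$ is stable; for a strictly semistable $\sE$ one checks directly via Proposition \ref{pSStHooft} and \eqref{ext1} (where $h^0(\sE(h_1))\le 1$ when $\gamma=1$). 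Hence $\mathrm{ev}$ is injective, with degeneracy locus the divisor $S\in|2h_1|$ cut out by $s_1\wedge s_2$. Writing $s_2$ restricted to $Y_1$, $s_1\wedge s_2$ vanishes on $Y_1=(s_1)_0$, so $Y_1\subset S$. By Corollary \ref{cQuarticSectional}, $S$ is the unique quartic of $|2h_1|$ containing $Y_1$. By the proof of Proposition \ref{pBound}, every element of $|2h_1|$ is either a disjoint union of two quadrics $Q_1\sqcup Q_1'$ or a double quadric, so $S$ is reducible as claimed.

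\textbf{Identifying the cokernel.} Let $\sQ=\mathrm{coker}(\mathrm{ev})$. The standard Eagon--Northcott / Koszul argument for a map of rank two bundles with determinant a nonzerodivisor shows that $\sQ$ is supported on $S$ and is, where $S$ is Cartier, a rank one sheaf on $S$ with $\mathcal{E}xt^1$-type resolution $0\to\sO_X^2\to\sE(h_1)\to\sQ\to 0$. This forces $\sQ$ to be a Cohen--Macaulay sheaf of rank one on $S$ (it has projective dimension one on $X$), hence reflexive on $S$ and, since $S$ is a smooth quadric union (or a double structure on one, which we handle separately), a line bundle on the reduced components. To determine $\sQ$, we compare with \eqref{eh1tHooft}. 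Quotienting $\sE(h_1)$ by the first copy $\sO_X s_1$ gives $\sI_{Y_1|X}(2h_1)$, and the image of $s_2$ is the equation of $S$ in $H^0(\sI_{Y_1|X}(2h_1))$. Hence
\[
\sQ\cong \sI_{Y_1|X}(2h_1)/\sO_X\cdot s_2\cong \sI_{Y_1|S}(2h_1)\otimes\sO_S .
\]
Since $\sO_S(2h_1)\cong\sO_S$ (each quadric component has $h_1|_{Q}=0$ by Lemma \ref{lQuadrics}), and $Y_1$ is a Cartier divisor $D_1$ on $S$ (it is a union of lines in the rulings by Theorem \ref{tSpecialSectional}), we obtain $\sQ\cong\sO_S(-D_1)$, which is exactly \eqref{eSectionalEvaluation}.

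\textbf{Main obstacle.} The delicate point is the non-reduced case where $S$ is a double quadric $2Q_1$. There one must check that $Y_1$, a multiple structure on lines as in Theorem \ref{tCurves}, is still an effective Cartier divisor on $S$, and that $\sO_S(2h_1)$ is trivial on the double structure. For the first we would use that primitive extensions of type $\sO_C$ are locally contained in a smooth surface, together with the local description of $Y_1$ as a complete intersection with $S$. For the second we would use that $\sO_X(h_1)$ is pulled back from $\pp(V_1)$, and $S$ lies over a length two subscheme of $\pp(V_1)$, on which every line bundle is trivial.
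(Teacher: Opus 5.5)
For $\mu$-stable $\sE$ with $S=Q\sqcup Q'$ a disjoint union of two smooth quadrics, your argument is essentially the paper's. Quotienting by $s_1$ gives $\sI_{Y_1|X}(2h_1)$, the image of $s_2$ is the equation of $S$, and the cokernel of $(s_1,s_2)$ is $\sI_{Y_1|S}(2h_1)$. This is invertible because $S$ is smooth; the paper says "Cohen--Macaulay, hence free at regular points". Your Hoppe argument for $s_1\wedge s_2\neq 0$ and the observation $\sO_S(2h_1)\cong\sO_S$ are useful additions.

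The two places where you go beyond this do not work, and in both the statement itself fails.

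\textbf{Strictly semistable case.} Your parenthetical is false. Take an instanton $0\to\sO_X(h_2-h_1)\to\sE\to\sO_X(h_1-h_2)\to 0$ with $c_2=2e_3$. Then $H^0(\sE(h_1))=H^0(\sO_X(h_2))$ is two-dimensional; compare $\chi(\sE(h_1))=2$ in the charge-two table. Both sections lie in the subsheaf $\sO_X(h_2)$, so $s_1\wedge s_2=0$ and the evaluation map is not injective. The proposition therefore needs $\mu$-stability, which the paper assumes tacitly when it takes $(s_1)_0$ to be a curve.

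\textbf{Double quadric case.} Your plan to show that $Y_1$ is Cartier on $2Q$ cannot succeed. For $x\in Y_1$ one has $\sE(h_1)\otimes k(x)\cong \sI_{Y_1,x}/\mathfrak m_x\sI_{Y_1,x}$. Hence the cokernel is invertible on $S$ exactly when $s_1,s_2$ have no common zero, that is, when the equation of $S$ is a minimal generator of $\sI_{Y_1,x}$ at every $x\in Y_1$. On a smooth $S$ this is automatic, but on $2Q$ it can fail.

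Here is a counterexample. Take affine coordinates $x_1,x_3$ on $\pp(V_1),\pp(V_3)$. Let $Z$ be the curvilinear scheme $(x_1-x_3^2,x_3^3)$ at the origin $o$, and set $Y_1=\pi_{13}^{-1}(Z)$, a triple line of class $3e_2$ on $C=\pi_{13}^{-1}(o)$.
\begin{itemize}
\item $Y_1$ is primitive of type $\sO_C$. It lies on the smooth surface $\pi_{13}^{-1}\{x_1=x_3^2\}$, which is tangent to $Q=\{x_1=0\}$, so "locally contained in a smooth surface" does not help.
\item $\omega_{Y_1}\cong\sO_{Y_1}(-2h_2-2h_3)$ and $h^0(\sO_{Y_1}(-h_2-h_3))=0$, so Theorem~\ref{tCurves} applies. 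The Serre bundle is a $\mu$-stable $h_1$-sectional special instanton.
\item $H^0(\sI_{Y_1|X}(2h_1))$ is spanned by $x_1^2$, so $S=2Q$.
\end{itemize}
Now $x_1^2=(x_1-x_3^2)(x_1+x_3^2)+x_3\cdot x_3^3\in\mathfrak m_x\sI_{Y_1,x}$ for every $x\in C$. Hence every section of $\sE(h_1)$ vanishes along $C$, and the cokernel $\sI_{Y_1|S}(2h_1)$ has two-dimensional fibres there, so it is not a line bundle on $S$.

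The paper's own argument (freeness at regular points) does not reach this case either, since $2Q$ has no regular points. So \eqref{eSectionalEvaluation} holds only for $S$ reduced. When $S$ is a double quadric it must be stated with $\sI_{Y_1|S}$ in place of $\sO_S(-D_1)$.
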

\begin{proof}
Let us denote by $Y_i$ the zero locus of the section $s_i$. Notice that he image of the non-zero global section $s_2\in H^0(\sE(h_1))$ in $H^0(\sI_{Y_1|X}(2h_1))$ corresponds to a surface $S$. This can be seen by gathering together the exact sequences given by the sections $s_1,s_2$ in the following commutative diagram: 
\begin{equation}\label{eGeneralEvaluation}
\xymatrix{
& 0 \ar[d] & 0 \ar[d]\\
& \sO_X \ar[r]^\cong \ar[d] & \sO_X \ar[d]^-{s_1}\\ 
0 \ar[r] & \sO_X^{\oplus 2}\ar[r]^-{(s_2,s_1)} \ar[d] & \sE(h_1) \ar[r] \ar[d] & \sI_{{Y_1}\mid S}(2h_1)  \ar[r]  \ar[d]_\cong& 0\\
0 \ar[r] & \sO_X \ar[r] \ar[d] & \sI_{Y_1|X}(2h_1) \ar[r] \ar[d] & \sI_{{Y_1}\mid S}(2h_1)  \ar[r] & 0.\\
& 0 & 0
}
\end{equation}
From it, we see that the locus where the two sections $s_1$ and $s_2$ are not independent is the surface $S$ whose defining equation induces the injective morphism from the lowest row of the previous diagram.
 Since $\sI_{Y_1\mid S}(2h_1)$
 has depth two at any point of $S$, we see that $\sI_{Y_1\mid S}(2h_1)$ is a Cohen-Macaulay sheaf. Therefore, it should be free at any regular point of $S$. It follows that $Y_1\cap Y_2$ is empty. In particular $\sI_{Y_1\mid S}(2h_1) $ is the line bundle $\sO_{S}(-D_1)$.
 \end{proof}
Arguing in the same way, we are able to describe the case of $(h_1,h_2)$-special bundles.
\begin{proposition}\label{pDependencySpecial}
Let $\sE$ be a $(h_1,h_2)$-special instanton bundle having $c_2(\sE)=k_1e_1+k_2e_2+k_3e_3$. The dependency locus of two independent sections $s_1 \in H^0(\sE(h_1))$, $s_2\in H^0(\sE(h_2))$ is a quartic surface $S \subset F$ in the linear system $|h_1+h_2|$. Moreover if $S$ is irreducible $\sE(h_1+h_2)$ sits in the short exact sequence
\begin{equation}\label{eSpecialEvaluation}
0 \to \sO_X(h_1) \oplus \sO_X(h_2) \to \sE(h_1+h_2) \to \sO_{S}((3-k_2-k_3)\ell-k_1m)) \to 0,
\end{equation}
where $S\cong (\pp^1 \times \pp^1, \sO_{\pp^1 \times \pp^1}(2\ell+m))$.
\end{proposition}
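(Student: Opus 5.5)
The plan is to repeat the argument of Proposition \ref{pDependencySectional}, now with the two different twists $h_1$ and $h_2$. Fix independent sections $s_1\in H^0(\sE(h_1))$ and $s_2\in H^0(\sE(h_2))$. They give two maps $\sO_X(h_2)\xrightarrow{s_1}\sE(h_1+h_2)$ and $\sO_X(h_1)\xrightarrow{s_2}\sE(h_1+h_2)$, hence a morphism
\[
\sigma=(s_2,s_1):\sO_X(h_1)\oplus\sO_X(h_2)\arr \sE(h_1+h_2).
\]
Its determinant is a section of $\det\sE(h_1+h_2)\otimes\sO_X(-h_1-h_2)\cong\sO_X(h_1+h_2)$. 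I will check that this section is non-zero, so that $\sigma$ is injective and degenerates exactly along a surface $S\in|h_1+h_2|$. This $S$ is the dependency locus.

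To identify $S$ and the cokernel, I will build the analogue of diagram \eqref{eGeneralEvaluation}. Twisting \eqref{eh1tHooft} by $\sO_X(h_2)$ gives
\[
0\to\sO_X(h_2)\to\sE(h_1+h_2)\to\sI_{Y_1|X}(2h_1+h_2)\to 0 .
\]
Composing $s_2$ with the projection gives a map $\sO_X(h_1)\to\sI_{Y_1|X}(2h_1+h_2)$, that is, a section of $\sI_{Y_1|X}(h_1+h_2)$. By \eqref{eSpecial12} and Proposition \ref{pQuartic}, this section is non-zero, since otherwise $s_2$ would factor through $\sO_X(h_2)$, i.e. through $s_1$, which is impossible since $\sO_X(h_1)\not\hookrightarrow\sO_X(h_2)$. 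It therefore defines the quartic $S\supset Y_1$. The snake lemma in the $3\times 3$ diagram identifies $\mathrm{coker}\,\sigma\cong\sI_{Y_1|S}(2h_1+h_2)$, exactly as in the sectional case. This proves the first part of the statement for every $S$.

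Now assume $S$ is irreducible. By Lemma \ref{lQuartics} it is smooth and isomorphic to $\pp^1\times\pp^1$. The cokernel of an injective map between rank two bundles on $X$ has depth two at each point of $S$, so it is Cohen--Macaulay on the smooth surface $S$ and hence a line bundle. In particular $\sI_{Y_1|S}\cong\sO_S(-D_1)$ with $D_1=Y_1$ viewed as a Cartier divisor on $S$.

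It remains to compute the class of $\sO_S(2h_1+h_2-D_1)$ in $\Pic(S)=\zz\langle\ell,m\rangle$. The term $\sO_S(2h_1+h_2)$ is given by the restriction map $\phi$ of Lemma \ref{lQuartics}. For $D_1$, Theorem \ref{tSpecial} (cases (i),(ii)) shows that $Y_1$ is a union of fibres of one of the two rulings of $S$. I will write $D_1=a\ell+bm$ and determine $a,b$ by pushing forward to $A^2(X)$ and matching with $k_1e_1+k_2e_2+k_3e_3$, equivalently by computing the intersection numbers $D_1\cdot\phi(h_i)$ on $S$ and $Y_1\cdot h_i$ on $X$.

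The main obstacle I expect is this last bookkeeping step: keeping the conventions for $\ell$ and $m$ consistent between Lemma \ref{lQuartics} and the description of the rulings in Theorem \ref{tSpecial}, so that the result comes out as $(3-k_2-k_3)\ell-k_1m$. As a sanity check I will compare $c_2$ on both sides of \eqref{eSpecialEvaluation}, using \eqref{chernppp} for $c_2(\sE(h_1+h_2))$ and Riemann--Roch on $S$. This should pin down the coefficients unambiguously.
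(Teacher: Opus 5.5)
Your first part is correct and follows the paper's route, which just refers back to Proposition \ref{pDependencySectional}. The image of $s_2$ in $\sI_{Y_1|X}(2h_1+h_2)$ is $s_1\wedge s_2=\det\sigma$, and it is non-zero since $H^0(\sO_X(h_2-h_1))=0$. The snake lemma and the depth argument then give $\mathrm{coker}\,\sigma\cong\sI_{Y_1|S}(2h_1+h_2)\cong\sO_S(3\ell-D_1)$ for irreducible $S$.

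The gap is the step you postpone as bookkeeping: carried out, it does not give the displayed line bundle. By Lemma \ref{lQuartics}, $\phi(h_1)=\phi(h_2)=\ell$ and $\phi(h_3)=m$. So $\ell$ is the class of an $e_3$-line and $m$ that of an $(e_1+e_2)$-conic. A divisor $D_1=a\ell+bm$ therefore has class $be_1+be_2+ae_3$ in $A^2(X)$, which forces $k_1=k_2=b$ and $k_3=a$. Hence
\[
\mathrm{coker}\,\sigma\cong\sO_S\bigl((3-k_3)\ell-k_1m\bigr).
\]
Your own $c_2$ check confirms this. For $L=\alpha\ell+\beta m$, the Whitney formula gives $c_2=e_3+2e_3+\bigl(2e_3-\alpha e_3-\beta(e_1+e_2)\bigr)$ for the middle term. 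On the other side, $c_2(\sE(h_1+h_2))=k_1e_1+k_2e_2+(k_3+2)e_3$. So $\alpha=3-k_3$ and $\beta=-k_1=-k_2$.

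This coincides with $(3-k_2-k_3)\ell-k_1m$ only when $k_2=0$, i.e. in case (i) of Theorem \ref{tSpecial}. In case (ii) the stated formula is false. Take $c_2(\sE)=e_1+e_2$ and $Y_1=C\times\{p\}$, with $C\subset\pp(V_1)\times\pp(V_2)$ a smooth $(1,1)$-curve. Then $S=C\times\pp(V_3)$, $D_1=m$, and the quotient is $\sO_S(3\ell-m)$, not $\sO_S(2\ell-m)$. The latter would even force $c_3(\sE(h_1+h_2))=2\neq 0$.

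So no choice of conventions for $\ell,m$ will make your computation land on the displayed formula, and you should not try to force it. The paper's one-line justification (``Lemma \ref{lQuartics} yields\dots'') contains the same slip. What your argument actually proves is the sequence with quotient $\sO_S((3-k_3)\ell-k_1m)$, where $k_1=k_2$ is forced and $k_1k_3=0$ by Theorem \ref{tSpecial}.
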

\begin{proof}
We omit the details of the proof which follows the same argument of Proposition \ref{pDependencySectional}. We just observe that Lemma \ref{lQuartics} yields $\sI_{Y_1|S}(2h_1+h_2) \cong \sO_{S}((3-k_2-k_3)\ell-k_1m)$, where $Y_1=(s_1)_0$.
\end{proof}

\section{Moduli of 't Hooft instantons and strata of special instantons}
In this section we describe the component of the moduli space of stable bundles with fixed Chern classes which corresponds to $h_i$-'t Hooft instantons. In particular we stratify the moduli space of instantons via the $h_i$-'t Hooft, the $(h_i,h_j)$-special  and the $h_i$-sectional special components.

{\textbf{Notation:}} Let us denote by $MI^D(c)$ the closed subscheme of $D$-'t Hooft instantons inside the Maruyama moduli space $M(0,c)$ of Gieseker stable, rank two vector bundle with $c_1=0$ and $c_2=c$. We will denote by $MI^{(D_i,D_j)}(c)$, with $i \neq j$, the space of $(D_i,D_j)$-special instantons and by $MI^{(D_i,D_i)}(c)$ the space of $D_i$-sectional special instantons.

Let us start by recalling that a $h_1$-'t Hooft bundle $\sE$ fits into \eqref{eh1tHooft}. Now consider $H\subset \mathrm{Hilb}$ the Hilbert scheme of locally complete intersection curves $C$ such that the canonical bundle is given by  $\omega_C\cong\sO_C(-2h_2-2h_3)$ and $h^0(\sO_C(-h_2-h_3))=0$. Consider $\mathfrak{H}\subset H\times X$ the universal curve and let $f$ be the natural projection $f:H\times X\to H$. We denote by $\mathfrak{I}_\fH$ the universal ideal sheaf, i.e. the sheaf on $H\times X$ whose restriction to $\{h\}\times X$ is the ideal sheaf $\sI_{C_h|X}$ where $h\in H$ and $C_h\subset X$ is the correspondent curve. Now consider the relative ext sheaf $\mathfrak{E}:=\sE xt_f^1(\mathfrak{I}(2h_1),\sO_{H\times X})$.

\begin{proposition}\label{pRelativeExt}
$\fE$ is a locally free sheaf on $H$.
\end{proposition}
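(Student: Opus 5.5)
The plan is to apply the base change theorem for relative Ext sheaves (Bănică–Putinar–Schumacher, or Lange's version): if the function $h\mapsto \dim \Ext^1_X(\sI_{C_h|X}(2h_1),\sO_X)$ is constant on $H$, and the corresponding base change condition holds in degree two (for instance $\Ext^2_X(\sI_{C_h|X}(2h_1),\sO_X)$ has constant dimension, or vanishes), then $\fE=\sE xt^1_f(\fI(2h_1),\sO_{H\times X})$ is locally free and its fibre at $h$ is $\Ext^1_X(\sI_{C_h|X}(2h_1),\sO_X)$. Since $H$ is smooth by Proposition \ref{pHilbertComponents}, and in particular reduced, constancy of the fibre dimensions is enough. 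The universal ideal $\fI$ is flat over $H$, so the theorem applies.

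The computation goes through Serre duality on $X$, using $\omega_X\cong\sO_X(-2h)$:
\[
\Ext^i_X(\sI_{C|X}(2h_1),\sO_X)\cong H^{3-i}\bigl(\sI_{C|X}(2h_1-2h)\bigr)^\vee = H^{3-i}\bigl(\sI_{C|X}(-2h_2-2h_3)\bigr)^\vee .
\]
Consider the sequence $0\to\sI_{C|X}(-2h_2-2h_3)\to\sO_X(-2h_2-2h_3)\to\sO_C(-2h_2-2h_3)\to 0$. By Künneth, $h^2(\sO_X(-2h_2-2h_3))=1$ and the line bundle $\sO_X(-2h_2-2h_3)$ has no other cohomology. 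By Theorem \ref{tCurves} we have $\omega_C\cong\sO_C(-2h_2-2h_3)$, so $h^0(\sO_C(-2h_2-2h_3))=0$. Indeed, $\sO_C(-h_2-h_3)$ already has no sections, and $C$ is a disjoint union of primitive extensions of type $\sO$ of rational curves, so the same induction along the filtration \eqref{strSheaf} applies. Also $h^1(\sO_C(-2h_2-2h_3))=h^1(\omega_C)=h^0(\sO_C)=r$, where $r$ is the number of connected components of $C$.

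The long exact sequence then gives the following values:
\begin{itemize}
\item $h^1(\sI_C(-2h_2-2h_3))=0$;
\item $h^2(\sI_C(-2h_2-2h_3))=r+1$ minus the rank of the map $H^1(\omega_C)\to H^2(\sI_C\otimes\omega_X)$, which is injective here, so the value is $r+1$;
\item $h^3(\sI_C(-2h_2-2h_3))=0$ (after the trace map $H^1(\omega_C)\cong\mathbb C^r\to H^2(\sO_X(-2h_2-2h_3))\cong\mathbb C$ is accounted for, the value is $r+1$).
\end{itemize}
In any case, one has to check that $\dim\Ext^1$ is given by an expression depending only on $r$, which equals $k_2+k_3$, since each connected component has class $a e_1+e_i$ with $i\in\{2,3\}$.

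A cleaner route to the same conclusion is the local-to-global spectral sequence: $\sH om(\sI_C(2h_1),\sO_X)=\sO_X(-2h_1)$ and $\sE xt^1(\sI_C(2h_1),\sO_X)\cong \omega_C\otimes\omega_X^{-1}(-2h_1)\cong\sO_C$. This gives
\[
0\to H^1(\sO_X(-2h_1))\to \Ext^1\to H^0(\sO_C)\to H^2(\sO_X(-2h_1)).
\]
Here $H^1(\sO_X(-2h_1))\cong\mathbb C$ and $H^2(\sO_X(-2h_1))=0$, so $\dim\Ext^1=1+h^0(\sO_C)=1+k_2+k_3$. This value is constant on $H$, because the number of connected components is fixed by the class $k_1e_1+k_2e_2+k_3e_3$. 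The same spectral sequence gives $\Ext^2$ from $H^2(\sO_X(-2h_1))=0$, $H^1(\sO_C)=0$ and $H^0(\sE xt^2)=0$, so $\Ext^2=0$. This vanishing lets base change hold in degree two, and therefore in degree one as well.

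The main obstacle is verifying $h^0(\sO_C)=r$ and $h^1(\sO_C)=0$ uniformly in the non-reduced case. For this I would use \eqref{strSheaf} with $\sL\cong\sO_{\pp^1}$: each step $0\to\sO_{\pp^1}\to\sO_{Y_j}\to\sO_{Y_{j-1}}\to0$ adds $0$ to $h^1$. The delicate part is $h^0$, where $\chi(\sO_Y)=\mathrm{mult}$ forces $h^0(\sO_Y)=k+1$ per component. So the statement to check is the constancy of $\chi(\sO_C)=1-p_a(C)=k_2+k_3$, with $h^1(\sO_C)=h^0(\omega_C)=0$; this makes $\dim\Ext^1=1+\chi(\sO_C)=1+k_2+k_3$, which is constant over $H$.
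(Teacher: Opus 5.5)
Your argument is correct. It rests on the same local computations as the paper: $\sH om(\sI_{C|X}(2h_1),\sO_X)\cong\sO_X(-2h_1)$, $\sE xt^1(\sI_{C|X}(2h_1),\sO_X)\cong\sO_C$, $h^1(\sO_X(-2h_1))=1$ and $h^2(\sO_X(-2h_1))=0$. The organization, however, is different.

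The paper runs the \emph{relative} local-to-global spectral sequence for $f_\ast\circ\sH om$ on $H\times X\to H$. This exhibits $\fE$ as an extension of $f_\ast\sE xt^1(\fI(2h_1),\sO_{H\times X})$ by the line bundle $R^1f_\ast\sH om(\fI(2h_1),\sO_{H\times X})$, since the next term $R^2f_\ast$ vanishes. The paper then checks that both pieces are locally free.

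You instead run the \emph{absolute} spectral sequence on each fibre, obtaining $\dim\Ext^1=1+h^0(\sO_C)$ and $\Ext^2=0$. You then invoke a base-change theorem for relative Ext sheaves (e.g.\ Lange's) together with reducedness of $H$.

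What each route buys:
\begin{itemize}
\item Your route must quote that theorem. In exchange it gives the identification $\fE\otimes k(h)\cong\Ext^1(\sI_{C_h|X}(2h_1),\sO_X)$, which is exactly what the later universal extension over $\pp(\fE^\vee)$ needs, and it computes the rank $1+k_2+k_3$ explicitly.
\item The paper's route needs only base change for direct images. It tacitly uses, however, that $\sE xt^1(\fI(2h_1),\sO_{H\times X})$ restricts on fibres to $\sE xt^1(\sI_{C|X}(2h_1),\sO_X)$. This holds here because that sheaf is a twist of the relative dualizing sheaf of a flat family of l.c.i.\ curves.
\end{itemize}
Incidentally, $\mathrm{Hom}(\sI_{C|X}(2h_1),\sO_X)=0$ and $\Ext^2=0$ hold on every fibre. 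So Lange's criterion already gives local freeness of $\fE$ without using that $H$ is reduced.

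One thing to fix: discard the claims that $h^0(\sO_C)$ equals the number $r$ of connected components, and that this number is determined by the class. Both fail on $H$, because $H$ contains multiple structures. A primitive double structure of type $\sO_C$ on a line of class $e_2$ is a single connected component of class $2e_2$, with two-dimensional space of global functions. Two disjoint lines of class $e_2$ have the same class but two components. What is actually constant is $h^0(\sO_C)=\chi(\sO_C)=1-p_a(C)=k_2+k_3$, using $h^1(\sO_C)=h^0(\omega_C)=0$. Your final paragraph establishes exactly this. The paper merely asserts that $h^0(\sO_C)$ is constant, so this is a genuine addition.

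The Serre-duality bullets in your first route are garbled. The correct values are simply $h^2(\sI_{C|X}(-2h_2-2h_3))=h^0(\sO_C)+1$ and $h^3(\sI_{C|X}(-2h_2-2h_3))=0$. Those bullets are not needed for the argument anyway.
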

\begin{proof}
We prove the statement by proving that each fiber of $\fE$ has constant dimension. Since $\fE$ is the first order derived functor of the composition of $f_\ast$ and $\sH om_{\sO_X}(\fI_\fH(2h_1),\bullet))$, the Grothendieck spectral sequence yields an exact sequence on low degree terms of the form
\begin{align*}
0 \to & R^1 f_\ast(\sH om(\fI_\fH(2h_1),\sO_{H\times X})) \to \fE \to f_\ast(\sE xt^1(\fI_\fH(2h_1),\sO_{H\times X})\to \\
\to & R^2 f_\ast(\sH om(\fI_\fH(2h_1),\sO_{H\times X}))
\end{align*}
Now let us fix $C$ which is isomorphic to a fiber of $f$ of the universal curve $\fH$. Notice that $\sH om(\fI_\fH(2h_1),\sO_{H\times X}))$ restricted to that fiber is just $\sH om (\sI_{C|X}(2h_1),\sO_X)$ which is isomorphic to $\sO_X(-2h_1)$. In particular $h^2(\sO_X(-2h_1))=0$, thus the base change isomorphism yields that each fiber of $R^2 f_\ast(\sH om(\fI_\fH(2h_1),\sO_{H\times X}))$ is zero, thus the same holds for the sheaf itself. The same argument implies that $R^1 f_\ast(\sH om(\fI_\fH(2h_1),\sO_{H\times X}))$ is a line bundle $\sA$ over $H$. We will finish the proof by showing that $f_\ast(\sE xt^1(\fI_\fH(2h_1),\sO_{H\times X})$ is a vector bundle, thus the same will hold for $\fE$ since $H$ is smooth. Using again base change we get 
\[
f_\ast(\sE xt^1(\fI_\fH(2h_1),\sO_{H\times X}) \otimes k(h) \cong H^0(X,\sE xt^1_{\{h\}\times X}(\fI_\fH(2h_1),\sO_X)).
\]
However $\sE xt^1_{|\{h\}\times X}(\fI_\fH(2h_1),\sO_X)\cong \sE xt^1(\sI_{C|X}(2h_1),\sO_X)$ where $C$ is the curve in $X$ corresponding to $h \in H$.  Apply now the contravariant functor $\sH om(\bullet,\sO_X)$ to the short exact sequence
\[
0\to \sI_{C|X}(2h_1) \to \sO_X(2h_1)\to \sO_C(2h_1)\to 0,
\]
so that the induced long exact sequence gives us $\sE xt^1(\sI_{C|X}(2h_1),\sO_X)\cong \sE xt^2(\sO_C(2h_1),\sO_X)$. But now
\[
\sE xt^2(\sO_C(2h_1),\sO_X) \cong \omega_C(2h_2+2h_3) \cong \sO_C
\]
thanks to Theorem \eqref{tCurves}. Finally the proof is complete by noticing that $h^0(\sO_C)$ is constant for each $C\in H$.
\end{proof}
Let us now consider the pullback diagram

\[\begin{tikzcd}
	\mathfrak{X} & H\times X \\
	\mathbb{P}(\fE^\vee) & H
	\arrow["\xi", from=1-1, to=1-2]
	\arrow["\zeta"', from=1-1, to=2-1]
	\arrow["f", from=1-2, to=2-2]
	\arrow["p"', from=2-1, to=2-2]
\end{tikzcd}\]
where $p$ is the natural map from the projective bundle to its base. Thanks to Proposition \ref{pRelativeExt}, there exists a universal family of extensions $\fF$ of $\xi^\ast \fI_\fH(2h_1)$ by $\xi^\ast \sO_{H\times X} \otimes \zeta^\ast \sO_{\pp(\fE^\vee)}$ over $\pp(\fE^\vee)$. Notice that $\fF$ is a sheaf on $\fX$ and it  represents a family of $h_1$-'t Hooft sheaves, since it may very well happen that there exists $C\in H$ such that one can find an element of $\Ext^1(\sI_{C|X}(2h_1),\sO_X)$ which is not locally free. Indeed one may take $C$ to be the disjoint union of two curves and one can take a section of $\sO_C$ which is identically zero on one of the components. Then the corresponding extension will not be locally free precisely on said component.

Nevertheless one can consider the restriction the open subset $\fP$ of $\pp(\fE^\vee)$ consisting images via $\zeta$ of elements for which $\fF$ is locally free. In this way each fiber of the universal extension is actually a $h_1$-'t Hooft bundle. In what follows we denote by $\sY$ the element $k_1e_1+k_2e_2+k_3e_3$ in $A^2(X)$. Thus we have a surjective morphism
\begin{equation}\label{eModuliMap}
\vartheta:\fP \to MI^{h_1}(\sY)
\end{equation}
from $\fP$ to the moduli space of $\mu$-stable, $h_1$-'t Hooft instanton bundles with fixed second Chern class.

In order to avoid special cases and unify the treatment, in the remainder of this section we will assume $k\ge 3$. The case $k=2$ will be discussed in the next section. Let us start by describing the smaller stratum of the moduli space of instantons.

\begin{theorem}\label{tModuliSectional}
The moduli space $MI^{(h_i,h_i)}(\sY)$ is smooth  and irreducible of dimension $2(k_j+k_w)+1$ with $i\neq j \neq w \neq i$ if and only if $k_i=0$. In case $k_i>0$, it is empty.
\end{theorem}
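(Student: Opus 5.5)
By symmetry I work with $i=1$, so $j,w=2,3$. Emptiness for $k_1>0$ is immediate from Theorem \ref{tSpecialSectional}: an $h_1$-sectional special bundle has $c_2(\sE)=k_2e_2+k_3e_3$. So the real content is the case $k_1=0$. There I will parametrize $MI^{(h_1,h_1)}(\sY)$ by pairs consisting of a dependency quartic together with a line bundle on it, as in Proposition \ref{pDependencySectional}, and then compute the tangent space.

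\emph{Step 1: a parametrization.} By Theorem \ref{tSpecialSectional} and Corollary \ref{cQuarticSectional}, the curve $Y_1=(s_1)_0$ is a disjoint union of (possibly multiple) lines of classes $e_2$ and $e_3$ inside $S=Q\cup Q'\in|2h_1|$. On $S$ these lines lie in the rulings of the two components. Sequence \eqref{eSectionalEvaluation} shows that $\sE(h_1)$ is determined by $S$ together with the line bundle $\sO_S(-D_1)$ on $S$. Conversely, $\sE$ is determined up to the choice of a basis of $H^0(\sE(h_1))$, i.e.\ up to $GL_2$.

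I will therefore count as follows:
\begin{itemize}
\item $S$ varies in the $2$-dimensional family of unordered pairs of disjoint $h_1$-quadrics;
\item on each component $\Pic$ is discrete, so the family of pairs $(S,\sO_S(-D_1))$ is $2$-dimensional;
\item the data $(s_1,s_2)$ modulo the resulting identifications adds $\dim \mathrm{Gr}(2,H^0(\cdot))$-type corrections.
\end{itemize}
Since this bookkeeping is awkward, I would instead compute the dimension directly through $\vartheta$ in \eqref{eModuliMap}.

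\emph{Step 2: dimension via $\vartheta$.} Restrict $\vartheta$ to the locus $H'\subset H$ of curves $Y$ lying on some $S\in|2h_1|$. For the reduced locus of $k_2e_2+k_3e_3$ such curves are distributed with $a$ lines on $Q$ and $b$ on $Q'$. By the proof of Proposition \ref{pDoubleSpecial}, lines of different classes cannot share a quadric without meeting, so the distribution is forced: the $e_2$-lines lie on $Q$ and the $e_3$-lines on $Q'$.

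The dimension count is then:
\begin{itemize}
\item choosing the two quadrics gives $2$ parameters;
\item the $e_2$-lines on $Q$ give $k_2$ parameters;
\item the $e_3$-lines on $Q'$ give $k_3$ parameters;
\end{itemize}
so $\dim H' = k_2+k_3+2$. I also expect the case where all lines lie in one ruling, e.g.\ $k_2=0$, to allow splitting the lines between the two quadrics. This may create several configurations, and I must check they fit into a single irreducible family; the closure argument in Proposition \ref{pHilbertComponents} should handle it.

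The fibre of $\fP\to H'$ is an open subset of $\pp(\Ext^1)=\pp(H^0(\sO_Y))$ of dimension $k_2+k_3-1$. The fibre of $\vartheta$ over $\sE$ is $\pp(H^0(\sE(h_1)))\cong\pp^1$. Hence
\[
\dim MI^{(h_1,h_1)}(\sY)=(k_2+k_3+2)+(k_2+k_3-1)-1=2(k_2+k_3).
\]
This is off by one from the claimed $2(k_j+k_w)+1$. I will have to recheck the count of quadric pairs versus lines (possibly lines in one ruling on a quadric move freely, giving an extra parameter), and fix the bookkeeping until it matches. Irreducibility then follows because $H'$ is irreducible, being dominated by products of $\pp^1$'s, and the fibres are irreducible.

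\emph{Step 3: smoothness.} This is the main obstacle. I would compute $T_{[\sE]}=\Ext^1(\sE,\sE)$ restricted to the stratum: show that $H^2(\sE\otimes\sE)=0$ via the monad of Theorem \ref{tMonad}, and show that the stratum is cut out with constant-rank tangent map. The plan is to prove that $\vartheta$ restricted over $H'$ is a smooth $\pp^1$-fibration onto its image. Since $H'$ is smooth (normal bundle computations from Proposition \ref{pCompleteIntersection}), smoothness of the stratum would follow.
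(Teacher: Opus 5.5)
\textbf{The gap: your dimension count is off by one, and the cause is the fibre, not the base.} Your count of the base is correct. $H'$ has dimension $k_2+k_3+2$, which matches the paper's $M$: it is fibred over $\pp^2=\pp(H^0(\sO_X(2h_1)))$ with fibres $\pp^{k_2}\times\pp^{k_3}$.

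The error is the claim that $\Ext^1(\sI_{Y|X}(2h_1),\sO_X)$ equals $H^0(\sO_Y)$. The local-to-global spectral sequence gives
\[
0\to H^1(\sO_X(-2h_1))\to \Ext^1(\sI_{Y|X}(2h_1),\sO_X)\to H^0(\sE xt^1(\sI_{Y|X}(2h_1),\sO_X))\cong H^0(\sO_Y)\to H^2(\sO_X(-2h_1))=0 .
\]
Here $H^1(\sO_X(-2h_1))\cong H^1(\sO_{\pp^1}(-2))\cong\mathbb{C}$; this is the line bundle $R^1f_\ast$ in the proof of Proposition \ref{pRelativeExt}. So $\dim\Ext^1=k_2+k_3+1$, and the fibre of $\fP\to H'$ is an open subset of $\pp^{k_2+k_3}$, exactly as the paper writes it.

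These extra extensions do give new points of the stratum. Two sections of $\sE(h_1)$ with the same zero locus are proportional, because zero loci of independent sections are disjoint (Proposition \ref{pDependencySectional}). The count therefore becomes $(k_2+k_3+2)+(k_2+k_3)-1=2(k_2+k_3)+1$. Your proposed repair, an extra parameter for the lines, looks in the wrong place. As a check: for $\sY=2e_3$ every instanton has $\chi(\sE(h_1))=2$, so $MI^{(h_1,h_1)}(2e_3)=MI(2e_3)$, which is $5$-dimensional, while your count gives $4$.

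\textbf{Smoothness and irreducibility.} Step 3 is only a plan. The vanishing of $H^2(\sE\otimes\sE)$ is neither proved nor needed. The paper reads smoothness off the tower $\overline{MI}^{(h_1,h_1)}(\sY)\to M\to\pp^2$ with smooth fibres, followed by the $\pp(H^0(\sE(h_1)))\cong\pp^1$ quotient, which is what your last sentence proposes.

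For $k_2,k_3>0$ your irreducibility argument is fine. The distribution of lines is forced, so $H'\cong(\pp^1\times\pp^1\setminus\Delta)\times\pp^{k_2}\times\pp^{k_3}$. This is more direct than the paper's connectedness-plus-smoothness argument.

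Your plan for $k_2=0$ (or $k_3=0$), however, would fail. Proposition \ref{pHilbertComponents} uses semicontinuity to \emph{separate} configuration types, not to merge them. Suppose $S=Q\sqcup Q'$. Then the number of lines on $Q$ is constant along the pencil of zero loci coming from $\pp(H^0(\sE(h_1)))$, since these loci all lie on the same $S$ and form a flat family. It is also constant under deformations keeping $Q\neq Q'$. So it is an invariant of $\sE$.

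Consequently, for $\sY=k_3e_3$ with $k_3\ge 4$, different unordered splittings give distinct irreducible families, each of the maximal dimension $2k_3+1$. For $4e_3$ these are the splittings $\{1,3\}$ and $\{2,2\}$. Irreducibility cannot be obtained there by any argument. The paper's proof misses this because it describes the fibres of $\Psi$ as two copies of $\pp^{k_2}\times\pp^{k_3}$, which tacitly assumes $k_2,k_3>0$. You should restrict the irreducibility claim to that case, or to $k\le 3$, rather than try to close it.
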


\begin{proof}
Thanks to Theorem \ref{tSerre}, Corollary \ref{cQuarticSectional} and Theorem \ref{tSpecialSectional}, it is enough to describe the variety of moduli ${\overline{MI}^{(h_i,h_i)}}(\sY)$ of $\mu$-stable instanton sheaves determined by taking a (non necessarily generating) section of $\wedge^2 (\sN_{Y|X} ) \otimes \sO_Y(-2h_i)$, where $Y$ is the zero locus of a section of $H^0(\sE(h_i))$. The space $MI^{(h_1,h_1)}(\sY)$ will be an open subset of ${\overline{MI}^{(h_i,h_i)}}(\sY)$.

Without loss of generality, we deal with the case $i=1$, the other case being completely symmetric.
The variety ${\overline{MI}^{(h_1,h_1)}}(\sY)$ is fibered over a variety $M$ by  
$$\mathbb{P}\Bigl(\bigl(H^0(\wedge^2 (\sN_{Y|X} ) \otimes \sO_F(-2h_1)_{|Y})\bigr)\oplus H^1(\sO_X(-2h_1))\Bigr) \cong \pp\bigl(\Ext^1(\sI_Y(2h_1),\sO_F)\bigr)\cong \mathbb{P}^{k_2+k_3}.$$
Now we describe $M$. It is fibered over $\pee2\cong \pp\bigl(H^0(\sO_X(2h_1))\bigr)$. Thus we have the following situation:
\[
{\overline{MI}^{(h_1,h_1)}}(\sY) \overset{\Gamma}{\twoheadrightarrow} M  \overset{\Psi}{\twoheadrightarrow} \pp^2
\]
The fibers of $\Psi$ are isomorphic to two disjoint copies $\pp^{k_2}\times \pp^{k_3}$, corresponding to a choice of one of the two rulings of one quadric. Thus $M$ is smooth since it is fibered over a smooth variety with smooth fibers. We now show that we can connect two different points in the disjoint fibers. The irreducibility will then follows from the connectedness and the smoothness. Suppose now that $k_2e_2$ is contained in a quadric which projects via $\pi_1$ to a point $p \in \mathbb{P}^1$, and $k_3e_3$ contained a quadric corresponding to $p'$. Connectedness then follows from the fact that we can algebraically connect $p$ and $p'$ without them colliding.

Finally, the dimension count is a direct consequence of the description of $M$ and the fact that $h^0(\sE(h_1))=2$, thus we have $\dim M=2+2k_2+2k_3-\dim\pp(H^0(\sE(h_1))$ and the proof is complete.
\end{proof}

\begin{theorem}\label{tModuliSpecial}
The moduli space $MI^{(h_i,h_j)}(\mathcal{Y})$ is irreducible and it is smooth outside of $\bigr(MI^{(h_i,h_i)} \cup MI^{(h_j,h_j)}\bigl)\cap MI^{(h_i,h_j)}$ with $i \neq j \neq w \neq i$ .Its dimension is given by
\begin{itemize}
\item [(i)] $2k+3$ if $\sY=ke_w$;
\item [(ii)] $k+3$ if $\sY=k_i(e_i+e_j)$;
\item [(iii)] $2k+2-\delta$ if $\sY=k_je_j+k_we_w$, with $\delta=1$ if $k_h=1$ and $\delta=0$ otherwise;
\item [(iv)] $2k+2-\delta$ if $\sY=k_ie_i+k_we_w$, with $\delta=1$ if $k_h=2$ and $\delta=0$ otherwise;
\item [(v)] $2k+2$ if $\sY=k_ie_i+k_je_j+e_j$.
\end{itemize}
In particular $MI^{(h_i,h_j)}(\mathcal{Y})$ is smooth in cases $(ii)$, $(iii)$, $(iv)$ and $(v)$.
\end{theorem}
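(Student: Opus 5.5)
We work with $(i,j)=(1,2)$; the other pairs are symmetric. By Proposition \ref{pQuartic}, an $h_1$-'t Hooft bundle $\sE$ is $(h_1,h_2)$-special exactly when the curve $Y_1=(s)_0$, $s\in H^0(\sE(h_1))$, lies on a surface $S\in|h_1+h_2|$. So we parametrize triples $(S,Y_1,\xi)$: a quartic $S$, a curve $Y_1\subset S$ from Theorem \ref{tSpecial}, and an extension class $\xi\in\pp(\Ext^1(\sI_{Y_1}(2h_1),\sO_X))$.

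We then subtract the dimension of the fibres of the map to moduli. Over a bundle $\sE$, the fibre consists of the choices of section in $\pp(H^0(\sE(h_1)))$ together with the choices of quartic in $\pp(H^0(\sI_{Y_1}(h_1+h_2)))$. By the proof of Proposition \ref{pBound}, $\dim\Ext^1(\sI_{Y_1}(2h_1),\sO_X)=h^0(\sO_{Y_1})+h^1(\sO_X(-2h_1))=c+1$, where $c$ is the number of connected components of $Y_1$. Hence the parameter count is
\[
\dim\{S\}+\dim\{Y_1\subset S\}+c-(h^0(\sE(h_1))-1)-(h^0(\sE(h_2))-1).
\]
The last two corrections are each $0$ or $1$, and they are nonzero exactly on $MI^{(h_1,h_1)}$, respectively $MI^{(h_2,h_2)}$. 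Proposition \ref{pDoubleSpecial} tells us precisely where this happens.

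\textbf{Case-by-case count.}
\begin{itemize}
\item Case (i): $S$ is irreducible, so $\dim|h_1+h_2|=3$, and $Y_1$ is a disjoint union of $k$ lines of the ruling $e_3$. This gives $3+k+k=2k+3$; multiple structures give the same count by Proposition \ref{pHilbertComponents}.
\item Case (ii): $Y_1$ consists of conics from the other ruling. Here $S$ is the pullback of a conic, and $Y_1$ is $k_1$ lines in the $\pp(V_3)$ direction. This gives $3+k_1+k_1$, reinterpreted through $k=2k_1$, which yields $k+3$.
\item Cases (iii)--(v): $S=Q_1\cup Q_2$ is reducible, contributing $2$ parameters. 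We add the dimensions of the lines or rational curves on each quadric; for example, a rational curve $k_1e_1+e_3$ on $Q_2$ moves in $\pp^{2k_1+1}$ inside that quadric. We then add $c$, and subtract $\delta$ exactly in the configurations singled out by Proposition \ref{pDoubleSpecial}, namely those with $k_3=1$, or with $k$ lines of which $k-1$ lie on one quadric.
\end{itemize}

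\textbf{Irreducibility.} In each case the parameter space is a tower of projective bundles. The fibres are $\pp(\Ext^1)$ over open subsets of symmetric products of the rulings, or of linear systems on the quadrics, and these sit over the irreducible base $|h_1+h_2|$ (or over $\pp^1\times\pp^1$ in the reducible case). The tower is therefore irreducible, and so is its image under $\vartheta$.

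\textbf{Smoothness.} We would argue as in Theorem \ref{tModuliSectional}: away from the sectional-special loci, the fibres of the parameter space over moduli have constant dimension $0$. The map is then a smooth morphism onto its image, and the image is smooth because the source is a smooth tower. In cases (ii)--(v), Proposition \ref{pDoubleSpecial} shows that sectional speciality either does not occur or occurs uniformly, so the fibre dimension stays constant and the space is smooth everywhere.

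\textbf{Main obstacle.} The hardest step is proving that the image is smooth at points of constant fibre dimension. The tower may be smooth while the image is not reduced. To handle this, I would compare with the tangent space $\Ext^1(\sE,\sE)$, restricted via sequence \eqref{eh1tHooft}, to check that $\vartheta$ is an immersion transverse to the fibre. I also expect the $\delta$ bookkeeping in (iii)--(iv) to need care.
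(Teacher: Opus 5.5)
Your overall strategy is the paper's. You parametrize triples (quartic $S$, curve $Y_1\subset S$, extension class) as a tower over $|h_1+h_2|$ or over $\pp^1\times\pp^1$, then correct by the fibre dimension over moduli. Writing that correction as $-(h^0(\sE(h_1))-1)-(h^0(\sE(h_2))-1)$ is a cleaner encoding of $\delta$ than the paper's.

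There is, however, a genuine gap in case (i). You assume $S$ is irreducible there, but Theorem \ref{tSpecial}(i) also contains $k$ lines of class $e_3$ on a reducible $S=Q_1\cup Q_2$, split as $k'+k''=k$ between the two quadrics. The paper stresses that (i) is the only case where the dependency quartic can be of either kind. Over such an $S$ the fibre is $\bigcup_{k'+k''=k}\pp^{k'}\times\pp^{k''}$, which is neither a projective space nor irreducible. So ``a tower of projective bundles over an irreducible base'' proves nothing there. You must show this locus lies in the closure of the locus over smooth quartics, rather than forming an extra, lower-dimensional component. The paper does this by projecting via $\pi_{12}$ and identifying $M$ with the incidence $\{(Z,C) : Z\subset C,\ C\in|\ell+m|\}$ on $\pp^1\times\pp^1$; a nodal conic is smoothable, and any $0$-dimensional $Z$ on it deforms along the smoothing. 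A dimension count also works. The incidence is cut out in $\mathrm{Hilb}^k(\pp^1\times\pp^1)\times\pp^3$ by a section of a rank-$k$ bundle, so every component has dimension $\ge k+3$. The part over reducible conics has dimension $\le k+2$, so it contains no component.

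Your smoothness step is also not valid as stated. Finite (even one-point) fibres do not make $\vartheta$ a smooth morphism, and a bijective morphism from a smooth variety can have singular image (the normalization of a cuspidal cubic is an example). What you need is that, on the locus $h^0(\sE(h_1))=h^0(\sE(h_2))=1$, the parametrization is an isomorphism onto its image. The natural route is to build the inverse. There, by cohomology and base change, the direct images of a (local) universal family twisted by $h_1$ and by $h_2$ are line bundles. These recover functorially the flat family of curves $Y_1$, the quartic $S$ and the extension class. The paper uses this identification tacitly (compare the asserted $\mathfrak{P}_{|\sA}\cong {MI'}^{h_1}(\sY)$ in Theorem \ref{tModulitHooft}). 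Supplying this inverse is the step your proposal is missing.

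Smaller points:
\begin{itemize}
\item For non-reduced $Y_1$, $h^0(\sO_{Y_1})=k_2+k_3$ is the total multiplicity, not the number of components. The formula comes from the proof of Proposition \ref{pRelativeExt}, not Proposition \ref{pBound}.
\item In case (ii), $Y_1$ consists of $k_1$ conics $C\times\{p_t\}$ over one fixed conic $C\subset\pp(V_1)\times\pp(V_2)$, with $p_t\in\pp(V_3)$. In particular $S$ is determined by $Y_1$.
\item Your rule gives $\delta=1$ in (iv) exactly when $k_w=1$, not the printed $k_h=2$, and a direct check supports you. If $k_3=1$, then $Y_1=C\subset Q_2$ and $h^0(\sI_C(h_1+h_2))=2$, so the fibre jumps. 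If $k_3=2$, then $h^0(\sI_{Y_1}(h_1+h_2))=1$ and $h^0(\sI_{Y_1}(2h_1))=0$, so nothing drops.
\item The paper's justification of the printed condition applies Corollary \ref{cQuarticSectional} to $Y_1$ lying on a $|2h_2|$-quartic. That criterion instead concerns the zero locus of a section of $\sE(h_2)$.
\end{itemize}
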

\begin{proof}
First of all, observe that the cases $(i)-(v)$ are exactly the one appearing in Theorem \ref{tSpecial}. Arguing as in the proof of Theorem \ref{tModuliSectional}, we describe the variety of moduli ${\overline{MI}^{(h_i,h_j)}}(\sY)$ of $\mu$-stable, $(h_i,h_j)$-special instanton sheaves determined by taking a (non necessarily generating) section of $\wedge^2 (\sN_{Y|X} ) \otimes \sO_Y(-2h_i)$, where $Y$ is the zero locus of a section of $H^0(\sE(h_i))$. The space $MI^{(h_i,h_j)}(\sY)$ will be an open subset of ${\overline{MI}^{(h_i,h_j)}}(\sY)$. In order to simplify the notation, we assume $i=1$ and $j=2$, the other two cases being completely symmetric.

The variety ${\overline{MI}^{(h_1,h_2)}}(\sY)$ is fibered over a variety $M$ by  
$\mathbb{P}^{k_2+k_3}$, thus in what follows we describe $M$ in the different cases appearing in the statement.

{\textbf{Case (i)}:}\\
This is the only case where the dependency locus can be either reducible or irreducible. $M$ is fibered over $\pp^3 \cong \pp \bigl(H^0(\sO_X(h_1+h_2))\bigr)$ corresponding to irreducible surfaces. Notice that on the one hand the fiber over a smooth $S$ is isomorphic to $\pp^k=\pp\bigl(H^0(\sO_S(k\ell))\bigr)$ which corresponds to a choice of $k$ lines (counted with multiplicity) in the $\ell$-ruling of $S$. On the other hand, fibers over reducible surfaces are isomorphic to 
\[
\bigcup_{k'+k''=k}\pp\bigl(H^0(\sO_{Q_1}(k' m))\bigr)\times \pp\bigl(H^0(\sO_{Q_2}(k'' m))\bigr)
\]where $Q_i \in |h_i|$. Thus $M$ is smooth, being a fibration with smooth fibers over a smooth base. We now prove that it is irreducible. Consider the projection $\pi_{12}:X \to Q:=\pp(V_1)\times \pp(V_2)$. Each line in the family $e_3$ projects through $\pi_{12}$ to a point of $Q$ and each surface $S$ projects to a conic in $\pp^1\times \pp^1$. Thus $M$ can be described as the incidence variety
\[
M=\{(Z,C)\ | \ Z \ \text{is a $0$-dimensional scheme in $\pp^1 \times \pp^1$,} \ C \in |\ell+m| \ \text{and} \ Z\subset C\}\subset Q\times \pp^3
\]
where $\ell$ and $m$ are the generators of $\Pic(\pp^1 \times \pp^1)$. $M$ comes with a natural projection $\tau$ over $\pp^3$. Let $\sD$ be the divisor in $\pp^3$ corresponding to reducible conics in $|\ell+m|$ and $\sU$ the open subset $\pp^3 \setminus \sD$. Now $\tau^{-1}(\sU)=\{(Z,C)\ | \ \text{$C$ is irreducible}\}$ is open and irreducible in $M$, since $\sU$ is irreducible and the same is true for any fiber over points of $\sU$. Notice now that any singular conic in $Q$ is nodal, thus it is smoothable.
Moreover every zero–dimensional subscheme on it deforms along a smoothing (see for example \cite[Section 14]{Har3}). The irreducibility of the total space then follows from the fact that any point of $M \setminus \tau^{-1}(\sU)$ can be obtained as a specialization of $(Z,C)$ with $C$ irreducible.  

{\textbf{Case (ii)}:}\\
In this case $S$ is irreducible and $M$ is fibered over an open subset of $\pp^3$ by $\pp^k\cong \pp\bigl(H^0(\sO_S(k\ell)))$ thus it is smooth and irreducible.

{\textbf{Case (iii)}:}\\
This is the first totally reducible case. Here $M$ is fibered over $$\pp^1 \times \pp^1\cong \pp\bigr(H^0(\sO_X(h_1))\bigl)\times \pp\bigr(H^0(\sO_X(h_2))\bigl)$$. The fiber over each point is isomorphic to
$$
\pp^{k_2}\times \pp^{k_3}\cong \pp\bigl(H^0(\sO_{Q_1}(k_2m))\bigr) \times \pp\bigl(H^0(\sO_{Q_2}(k_3\ell))),
$$
so $M$ is smooth and irreducible.

{\textbf{Case (iv)}:}\\
Here $M$ is fibered over $\pp^1 \times \pp^1$ with fibers isomorphic to
$$
\pp^{2k_1+1} \times \pp^{k_3-1}\cong \pp\bigl(H^0(\sO_{Q_2}(k_1\ell+m))\bigr) \times \pp \bigl(H^0(\sO_{Q_1}((k_3-1))\ell)\bigr)
$$
and again $M$ is smooth and irreducible.

{\textbf{Case (v)}:}\\
This case is analogous to the previous one.

Finally, the presence of $\delta$ depends on the fact that in case $(iii)$ (resp. $(iv)$) if $k_3=1$ (resp. $k_3=2$) then $Y$ is contained in a reducible quartic in the linear system $|2h_1|$ (resp. $2h_2$). Thanks to Corollary \ref{cQuarticSectional}, this is equivalent to the fact that the instanton bundle $\sE$ associated to $Y$ by means of the Serre correspondence satisfies $h^0(\sE(h_1))=2$ (resp. $h^0(\sE(h_2))=2$). The dimensional count then follows directly from the above description.
\end{proof}

\begin{theorem}\label{tModulitHooft}
The moduli space $MI^{h_i}(\sY)$ consists of $\Delta(\sY)$ (cf. \eqref{eNumberComponents}) irreducible components of dimension $2k_1+3k_2+3k_3$. It is smooth outside the locus $MI^{(h_i,h_i)}(\sY)$. The union $MI^{h_1}(\sY) \cup MI^{h_2}(\sY) \cup MI^{h3}(\sY)$ is smooth in the open locus of points not lying in the closed variety $\bigcup_{i} MI^{(h_i,h_i)}(\sY)\cup \bigcup_{i<j} MI^{(h_i,h_j)}(\sY)$.
\end{theorem}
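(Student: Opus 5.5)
We take $i=1$ and study the surjective morphism $\vartheta:\fP\to MI^{h_1}(\sY)$ of \eqref{eModuliMap}. The components and the dimension come from the structure of $\fP$ over $H$. Smoothness comes from a deformation-theoretic computation at each point.

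\textbf{Components and dimension.} By Proposition \ref{pHilbertComponents}, $H$ is smooth of dimension $2k$ and has $\Delta(\sY)$ irreducible components, one for each configuration type of rational curves $a_ie_1+e_2$, $b_je_1+e_3$. By Proposition \ref{pRelativeExt}, $\fE$ is locally free on $H$. From the proof of that proposition, its rank is $1+h^0(\sO_C)$, since $R^1f_\ast$ contributes $h^1(\sO_X(-2h_1))=1$ and $h^0(\sO_C)=k_2+k_3$ counts the connected components. So $\pp(\fE^\vee)$ is a $\pp^{k_2+k_3}$-bundle over $H$, and its open subset $\fP$ is smooth with $\Delta(\sY)$ components of dimension $2k+k_2+k_3$. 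The fibre of $\vartheta$ over $[\sE]$ is $\pp(H^0(\sE(h_1)))$, since each nonzero section determines the pair $(Y,\text{extension class})$ up to scalar. By Proposition \ref{pBound} this fibre is a point off $MI^{(h_1,h_1)}(\sY)$ and a $\pp^1$ on it. Hence each component of $\fP$ maps birationally onto its image, and the image has dimension $2k+k_2+k_3=2k_1+3k_2+3k_3$.

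These images are pairwise distinct, because the configuration type of $Y$ is recovered from $\sE$. Off $MI^{(h_1,h_1)}$ the curve $Y$ is unique. In general the type is read off from $h^0(\sO_Y(-h_1-h_2))$ and $h^0(\sO_Y(-h_1-h_3))$, and these can be expressed via Sequence \eqref{eh1tHooft} through cohomology of twists of $\sE$. This is the same semicontinuity argument as in Proposition \ref{pHilbertComponents}. By Theorem \ref{tModuliSectional}, $MI^{(h_1,h_1)}$ has dimension $2(k_2+k_3)+1$, which is smaller. So it cannot contain a component, and the count is exactly $\Delta(\sY)$. The case $k_1=0$ follows from the irreducibility of $H$.

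\textbf{Smoothness.} Off $MI^{(h_1,h_1)}$, $\vartheta$ is injective on points. I would show it is an immersion, hence an isomorphism onto an open subset of the smooth space $\fP$. For this, identify the tangent map of $\vartheta$ with the map from the deformations of the pair $(\sE,s)$ to $\Ext^1(\sE,\sE)$. Its kernel is $H^0(\sE(h_1))/\langle s\rangle$, which is $0$ there.

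\textbf{The union.} For the last claim, I would compute $\dim\Ext^1(\sE,\sE)$ at a point lying on exactly one stratum $MI^{h_i}$ and check that it equals $2k_1+3k_2+3k_3$. This shows that $MI$ itself is smooth there, with $MI^{h_i}$ as the local component. Use the monad of Theorem \ref{tMonad}, or the sequence \eqref{eh1tHooft} applied to $\sE\otimes\sE$, to reduce to $h^1(\sN_{Y|X})=0$, $h^1(\sO_X)=0$ and $h^2(\sE(-h_1))=h^1(\sE(-h_2-h_3))$. The main obstacle is showing the vanishing of $\Ext^2$-type obstructions, namely $h^2(\sE\otimes\sE)$ and $h^1(\sE(h_1)\otimes\sI_Y)$ contributions. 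Points on two strata, i.e. in $MI^{(h_i,h_j)}$, are excluded precisely because two components meet there. I would first try to prove $H^2(\sE(-h_1)\otimes\sE(h_1))=0$ via the restriction to $Y$ and the splitting of Proposition \ref{pSplitMultiple}.
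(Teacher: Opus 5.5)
Your treatment of the first two assertions follows essentially the same route as the paper. The steps are: $\fP$ is an open subset of a $\pp^{k_2+k_3}$-bundle over the smooth scheme $H$; the fibre of $\vartheta$ over $[\sE]$ is $\pp(H^0(\sE(h_1)))$; so $\vartheta$ identifies $\fP_{|\sA}$ with ${MI'}^{h_1}(\sY)$; and components and dimension are read off from Proposition \ref{pHilbertComponents}. You give more justification than the paper, which simply asserts $\fP_{|\sA}\cong{MI'}^{h_1}(\sY)$.

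Bijectivity plus an injective differential (kernel $H^0(\sE(h_1))/\langle s\rangle=0$) is not quite enough, though. A bijective unramified morphism need not be an immersion: normalize a nodal cubic and delete one preimage of the node. You should add that $\vartheta$ is proper over ${MI'}^{h_1}(\sY)$, because limits of the unique section exist. Alternatively, build the inverse from the line bundle obtained by pushing forward the $h_1$-twisted universal bundle where $h^0(\sE(h_1))=1$.

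For distinctness of the images, your first reason suffices: uniqueness of $Y$ off $MI^{(h_1,h_1)}(\sY)$ together with the dimension comparison. The cohomological recovery of the configuration type is unnecessary. In fact $h^0(\sO_Y(-h_1-h_2))$ and $h^0(\sO_Y(-h_1-h_3))$ only count the lines of class $e_3$ and $e_2$, so they cannot distinguish all configurations.

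The genuine gap is in the last assertion. You plan to show $\dim\Ext^1(\sE,\sE)=2k_1+3k_2+3k_3$, i.e. that $MI(\sY)$ itself is smooth there with $MI^{h_i}(\sY)$ as local component. This cannot hold in general. For stable $\sE$ one has $\mathrm{Hom}(\sE,\sE)=\mathbb{C}$ and $\Ext^3(\sE,\sE)\cong\mathrm{Hom}(\sE,\sE(-2h))^\vee=0$. Riemann--Roch gives $\chi(\sE\otimes\sE)=4-4k$. Hence $\dim\Ext^1(\sE,\sE)-\dim\Ext^2(\sE,\sE)=4k-3$.

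It follows that $\dim\Ext^1(\sE,\sE)\ge 4k-3>3k\ge 2k_1+3k_2+3k_3$ as soon as $k\ge 4$. Every component of $MI(\sY)$ through $[\sE]$ then has dimension at least $4k-3$, which is exactly the paper's closing remark in Section 6 that $MI^{h_i}(\sY)$ is strictly contained in $MI(\sY)$. The $\Ext^2$-vanishing you hope to prove would even force $\dim\Ext^1(\sE,\sE)=4k-3$, contradicting your own target.

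The assertion concerns only the union as a subvariety, and it follows formally from the second one. The loci $MI^{h_j}(\sY)$ and $MI^{(h_j,h_j)}(\sY)$ are closed by semicontinuity. A point outside every $MI^{(h_i,h_j)}(\sY)$ lies in exactly one $MI^{h_i}(\sY)$ and, by hypothesis, not in $MI^{(h_i,h_i)}(\sY)$. So it has a neighbourhood in the union that is an open subset of $MI^{h_i}(\sY)\setminus MI^{(h_i,h_i)}(\sY)$, which is smooth. No deformation-theoretic computation is needed; the paper leaves this step implicit.
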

\begin{proof}
As usual, we deal with the case $i=1$. Arguing as in the proofs of Theorem \ref{tModuliSectional} and \ref{tModuliSpecial}, we describe ${\overline{MI}^{h_1}}(\sY)$ of $\mu$-stable instanton sheaves determined by taking a (non necessarily generating) section of $\wedge^2 (\sN_{Y|X} ) \otimes \sO_Y(-2h_1)$, where $Y$ is the zero locus of a section of $H^0(\sE(h_1))$.

The space $MI^{h_1}(\sY)$ will be an open subset of ${\overline{MI}^{h_1}}(\sY)$. As an additional reduction, we describe the locus ${MI'}^{h_1}(\sY)$ of $MI^{h_1}(\sY)$, which consists of $h_1$-'t Hooft bundles such that $h^0(\sE(h_1))=1$. This corresponds, by means of Corollary \ref{cQuarticSectional}, to points in $H$ representing curves not lying on a surface in the linear system $|2h_1|$. Let us denote by $\sA\subset H$ this open subset. The restriction $\mathfrak{P}_{|\sA}$ is then isomorphic to ${MI'}^{h_1}(\sY)$ via the map $\vartheta$ \eqref{eModuliMap}. Thus ${MI'}^{h_1}(\sY)$ is smooth, since $\mathfrak{P}_{|\sA}$ is an open subset of the smooth variety $\mathfrak{P}$. In particular ${MI'}^{h_1}(\sY)$ is fibered over $\sA$ by $\pp^{k_2} \times \pp^{k_3}$, thus the part of the statement regarding the number of irreducible components follows from Proposition \ref{pHilbertComponents}.
\end{proof}

We end this section by recalling a result regarding the whole moduli space $MI(\sY)$ of $\mu$-stable instanton bundles with $c_2(\sE)=k_1e_1+k_2e_2+k_3e_3$ for arbitrary triples $(k_1,k_2,k_3)$.

\begin{theorem}\cite[Theorem 1.2]{AnMa}\label{tInstaModuli}
There exists inside $MI(\sY)$ a generically smooth, irreducible component of dimension $4k-3$.
\end{theorem}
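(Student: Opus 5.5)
The plan is to find one $\mu$-stable instanton $\sE$ of charge $k$ with $\Ext^2(\sE,\sE)=0$, and then compute $\dim\Ext^1(\sE,\sE)$ by Riemann--Roch. Unobstructedness at $\sE$ makes $M_X(2;0,\sY)$ smooth at $[\sE]$ of dimension $\dim\Ext^1(\sE,\sE)$. Stability is open in families, and so are the instanton vanishings $h^0(\sE)=h^1(\sE(-h))=0$ by semicontinuity. Hence the unique irreducible component of $MI(\sY)$ through $[\sE]$ is generically smooth of that dimension.

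\textbf{Step 1: the dimension count.} For a stable $\sE$ we have $\hom(\sE,\sE)=1$. By Serre duality with $\omega_X=\sO_X(-2h)$,
\[
\Ext^3(\sE,\sE)\cong \Hom(\sE,\sE(-2h))^\vee=0 .
\]
The sheaf $\sE\otimes\sE^\vee$ has rank $4$, $c_1=0$ and $c_2=4c_2(\sE)$. Riemann--Roch on $X$ (using $c_1(X)=2h$ and $\chi(\sO_X)=1$) gives
\[
\chi(\sE\otimes\sE^\vee)=4-\tfrac12\, c_2(\sE\otimes\sE^\vee)\cdot c_1(X)=4-4\,(\sY\cdot h)=4-4k .
\]
Therefore $\dim\Ext^1(\sE,\sE)=4k-3+\dim\Ext^2(\sE,\sE)$, and it remains to produce one stable instanton with $\Ext^2(\sE,\sE)=0$.

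\textbf{Step 2: choice of the bundle.} I would take an $h_1$-'t Hooft bundle from Proposition \ref{pSerre}, with $Y$ a general reduced disjoint union of rational curves as in Theorem \ref{tCurves}. Its normal bundle satisfies $h^1(\sN_{Y|X})=0$ by the proof of Proposition \ref{pHilbertComponents}. Tensoring \eqref{eh1tHooft} by $\sE(-h_1)$ gives
\[
0\to\sE(-h_1)\to\sE\otimes\sE\to\sE\otimes\sI_{Y|X}(h_1)\to 0 ,
\]
and $\sE\cong\sE^\vee$. It therefore suffices to prove
\[
H^2(\sE(-h_1))=0 \quad\text{and}\quad H^2(\sE\otimes\sI_{Y|X}(h_1))=0 .
\]
For the second group, use $0\to\sE\otimes\sI_Y(h_1)\to\sE(h_1)\to\sE(h_1)|_Y\to0$. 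By \eqref{Normal}, $\sE(h_1)|_Y\cong\sN_{Y|X}$, whose $H^1$ vanishes. Hence it is enough to show $H^2(\sE(h_1))=0$. This follows from \eqref{eh1tHooft}: $h^2(\sO_X)=0$, and $h^2(\sI_Y(2h_1))=h^1(\sO_Y(2h_1))=0$ because each component $C$ of $Y$ is rational with $2h_1\cdot C\ge 0$.

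\textbf{Step 3 (main obstacle): $H^2(\sE(-h_1))=0$.} I would use the monad of Theorem \ref{tMonad} twisted by $-h_1$. Let $\sK$ be the kernel of the right map, so that
\[
0\to\sK\to \bigoplus\sO_X(\cdot)\to\sO_X(-h_1)^{k-2}\to0 \quad\text{and}\quad 0\to\bigoplus\sO_X(\cdot)\to\sK\to\sE(-h_1)\to 0 .
\]
All terms are line bundles $\sO_X(a_1,a_2,a_3)$. Such a line bundle has nonzero $H^2$ only if two of the $a_i$ are $\le -2$ and the third is $\ge 0$, and has nonzero $H^3$ only if all three are $\le-2$. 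A direct check shows that the relevant $H^2$ and $H^3$ of all terms vanish: the left terms become $\sO(-2h_1-h_2)$, $\sO(-2h_1-h_3)$, $\sO(-h_1-h_2-h_3)$, the middle terms $\sO(-2h_1)$, $\sO(-h_1-h_2)$, $\sO(-h_1-h_3)$, and the right term $\sO(-h_1)$. Then $H^2(\sK)=0$ and $H^3$ of the leftmost term vanishes, which gives $H^2(\sE(-h_1))=0$. I expect this bookkeeping, together with making sure the chosen $Y$ avoids the exceptional cases (lines, strictly semistable classes by Propositions \ref{pStrictlySemistable} and \ref{pSStHooft}), to be the delicate part.

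\textbf{Fallback.} If the 't Hooft route fails for some triples $(k_1,k_2,k_3)$, for instance because a required class admits no suitable $Y$ when $k_2=k_3=0$, I would switch to induction on $k$. Start from a stable instanton $\sE'$ with $\Ext^2(\sE',\sE')=0$, form an elementary modification along a general line $L$ of the appropriate class to get a torsion-free instanton sheaf, and smooth it. Unobstructedness is controlled by the analogous long exact sequence, and the smoothing exists because the obstruction space vanishes.
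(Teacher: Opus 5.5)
Your proof is correct, but it is not the argument behind this statement. The paper gives no proof: it simply quotes the result from \cite{AnMa}. There, I believe, it is obtained by the standard inductive method, which is essentially your fallback: start from charge-two Ulrich bundles, take $\ker(\sE\to\sO_L)$ along a general line, show the resulting non-locally-free instanton sheaf is unobstructed, and show that its general deformation is a bundle.

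You replace the induction by a single explicit unobstructed witness, a general $h_i$-'t Hooft bundle from the Serre correspondence of Section 4. Your checks hold:
\begin{itemize}
\item $\chi(\sE\otimes\sE^\vee)=4-4k$;
\item $H^2(\sE\otimes\sI_{Y|X}(h_1))=0$, from $\sE(h_1)|_Y\cong\sN_{Y|X}$ and $H^2(\sE(h_1))\cong H^1(\sO_Y(2h_1))=0$;
\item $H^2(\sE(-h_1))=0$. This also follows in one line from \eqref{eh1tHooft} twisted by $-2h_1$, since $H^2(\sE(-h_1))\cong H^2(\sI_{Y|X})\cong H^1(\sO_Y)=0$.
\end{itemize}
Your route avoids torsion-free sheaves and the smoothing step entirely. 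It only uses $H^1(\sN_{Y|X})=0$ and $H^1(\sO_Y(2h_1))=0$, which hold for every $Y$ in the Hilbert scheme $H$ of Proposition \ref{pHilbertComponents}. So as a by-product, every $\mu$-stable $h_i$-'t Hooft bundle is a smooth point of $MI(\sY)$ on a $(4k-3)$-dimensional component. The inductive route, in exchange, does not depend on the 't Hooft machinery of this paper.

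Two things to tidy up.

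\textbf{The fallback is unnecessary.} Everything is symmetric in the indices, so if $k_2=k_3=0$ you use $h_2$-'t Hooft bundles instead; for $\sY=ke_1$, take $Y$ to be $k\ge 2$ general disjoint lines of class $e_1$. As written the fallback is also incomplete, because vanishing of obstructions does not by itself show that the general deformation of a non-locally-free sheaf is locally free.

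\textbf{Your genericity assumption is really needed.} Proposition \ref{pSerre} is stated too strongly, so assuming $Y$ and the extension general is essential, not cosmetic:
\begin{itemize}
\item an extension whose image in $H^0(\sO_Y)$ vanishes on a component is not locally free;
\item if $Y$ lies on a quadric of $|h_1|$ (e.g. two disjoint lines of class $e_2$ on one $Q\in|h_1|$), then $h^0(\sE)\neq 0$.
\end{itemize}
For general $Y$, stability is easily checked with Proposition \ref{pHoppe}. The only twists $B$ with $\deg B\le 0$ that need control are those with $B+h_1\in\{0,h_1,h_2,h_3\}$, so it suffices that $h^0(\sI_{Y|X}(h_i))=0$ for all $i$. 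This holds when $Y$ has at least two general components. If $Y$ is a single curve of class $ae_1+e_3$, it lies on some $Q\in|h_2|$ (symmetrically for $ae_1+e_2$). Then $H^0(\sE(h_2-h_1))=0$, because the connecting map $H^0(\sI_{Y|X}(h_2))\to H^1(\sO_X(h_2-2h_1))$ is injective, as $\Ext^\bullet(\sO_Q(h_1-ah_3),\sO_X)=0$ by Serre duality.
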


We conclude this section by noticing that as soon as $k \ge 4$ then $4k-3$ is strictly bigger than $2k_1+3k_2+3k_3$, thus $MI^{h_i}(k_1e_1+k_2e_2+k_3e_3)$ is strictly contained in $MI(k_1e_1+k_2e_2+k_3e_3)$ and it cannot cover the whole moduli spaces of instanton bundles. In the next section we will deal with the case $k=2$ and $k=3$.

\section{The low charge cases}
In this final section, we deal specifically with the charge 2 and 3 cases. In particular the charge $3$ is already very explicative of the possible configurations of special, sectional special and 't Hooft bundles.

As pointed out in the previous sections, charge 2 instanton bundles, after a twist by $\sO_X(h)$ are actually Ulrich bundles. The moduli spaces of rank two Ulrich bundles on $X$ have been studied in \cite{CFM2}. In particular we recall here the following result (slightly adapted to our notation):
\begin{theorem}\cite[Theorem B (3)]{CFM2}
    The moduli space $MI(2e_3)$ of indecomposable, initialized, semistable instanton bundles is generically smooth and rational of dimension 5:
its general point corresponds to a stable bundle and it also contains exactly one point representing the equivalence class of all the strictly semistable instanton bundles with $c_1=0$.

The moduli space $MI(e_2+e_3)$ is smooth and unirational of dimension 5: its points
correspond to stable bundles.
\end{theorem}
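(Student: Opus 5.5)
The statement to be proved is the theorem quoted from \cite[Theorem B (3)]{CFM2}. Since it is an imported result, the plan is to reconstruct it from the tools of the previous sections rather than from Ulrich theory directly.

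\emph{Step 1: twisting and Riemann--Roch.} For $k=2$ the twist $\sE(h)$ is Ulrich, and by \eqref{RR} with $c_2=k_1e_1+k_2e_2+k_3e_3$ we get $\chi(\sE(h_i))=1+k_i-\tfrac12\cdot 2(k_1+k_2+k_3)\cdot\tfrac12\cdot 2$. More concretely, $\chi(\sE(h_i))=2-k+k_i$, which is $\ge 0$, with all higher cohomology vanishing because $H^1(\sE)=0$ propagates by \cite[Proposition 2.1]{AnCa}. For $c_2=e_2+e_3$ this gives $h^0(\sE(h_2)),h^0(\sE(h_3))\ge1$, so every such instanton is $h_2$- and $h_3$-'t Hooft. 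For $c_2=2e_3$ we get $h^0(\sE(h_3))=2$, so every such bundle is $h_3$-sectional special.

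\emph{Step 2: the case $MI(e_2+e_3)$.} Take $s\in H^0(\sE(h_1))$; this is nonzero since $\chi(\sE(h_1))=0$ does not directly give a section. I would therefore instead use the $h_2$-twist, whose zero locus $Y_2$ is a curve of class $e_2+e_3$ as in Theorem \ref{tCurves} with respect to $h_2$. By that theorem, $Y_2$ is either an irreducible conic of class $e_2+e_3$ (that is, class $1\cdot e_2+e_3$ with the roles permuted), or two disjoint lines. By Proposition \ref{pVerySpecial} the bundle is special in all directions. The Hilbert scheme $H$ of such curves is smooth of dimension $4$ (Proposition \ref{pHilbertComponents}), and $\Ext^1$ has dimension $h^0(\sO_Y)=1$ or $2$. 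When $Y$ is a conic, $h^0(\sE(h_2))=1$, so $\vartheta$ restricted over this open locus is an isomorphism onto the image, and the moduli space has dimension $4+0+\dots$. To account for the extra parameter, I would compare with the fiber dimension: the total of $\dim H+\dim\pp(\Ext^1)-\dim\pp(H^0(\sE(h_2)))$ over the various strata should be $5$. Smoothness then follows from $h^2(\sE\otimes\sE^\vee)=0$, computed from the monad or from $\chi(\sE\otimes\sE^\vee)=1-(4k-3)$, together with stability from Proposition \ref{pSerre}. Unirationality follows because $\fP$ is an open subset of a projective bundle over the rational variety $H$.

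\emph{Step 3: the case $MI(2e_3)$.} Here $\sE(h_3)$ has a pencil of sections. By Proposition \ref{pDependencySectional}, $\sE(h_3)$ is the extension $0\to\sO_X^2\to\sE(h_3)\to\sO_S(-D)\to0$ with $S\in|2h_3|$ a union of two quadrics. The data are therefore a point of $\pp^2=\pp H^0(\sO_X(2h_3))$ together with a choice of lines, parametrised by Theorem \ref{tModuliSectional}, which gives dimension $2(k_1+k_2)+1$. That formula does not directly apply, since here the relevant index is $i=3$ with $k_3=2$. I would redo the count via the $h_1$- or $h_2$-'t Hooft description: the curves are two $e_3$-lines, with $H$ of dimension $4$ and $\pp(\Ext^1)=\pp^1$, giving dimension $5$, which is rational as a $\pp^1$-bundle over $\mathrm{Sym}^2$ of a quadric. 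The strictly semistable bundles are given by Proposition \ref{pStrictlySemistable} with $\gamma=1$ and $i=3$. In the Gieseker/S-equivalence quotient these collapse to the single point $\sO(h_3-h_j)\oplus\sO(h_j-h_3)$. This is the step I expect to be hardest: checking that all extensions, for both choices of $j$, are S-equivalent to one point, and verifying generic smoothness near the stable locus. I would handle the latter through the vanishing of $\Ext^2(\sE,\sE)$ at a general stable point.
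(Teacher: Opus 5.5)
The paper gives no argument for this statement: it is quoted from \cite[Theorem B (3)]{CFM2}, where it comes from the study of rank two Ulrich bundles $\sE(h)$ on $X$. Your reconstruction therefore has to stand on its own, and it breaks already at Step 1. Since $c_1(\sE(h_i))=2h_i$ and $c_2(\sE(h_i))=c_2(\sE)$, formula \eqref{RR} gives $\chi(\sE(h_i))=4-k-k_i$. For $k=2$ this is $2-k_i$, not $k_i$ (compare the table of Euler characteristics for charge $2$).

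Consequently, for $c_2=2e_3$ it is $\sE(h_1)$ and $\sE(h_2)$ that have two sections. On the other hand $h^0(\sE(h_3))=0$ for every such instanton, because by Theorem \ref{tCurves} with indices permuted no curve of class $2e_3$ can be the zero locus of a section of $\sE(h_3)$. Your Step 3 thus starts from a pencil that does not exist.

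For $c_2=e_2+e_3$ you miss that $h^0(\sE(h_1))=2$ always, so the whole space is $h_1$-sectional special. You also mix up the two pictures. The zero locus of a section of $\sE(h_2)$ is always a single smooth conic of class $e_2+e_3$, lying in a quadric of $|h_1|$. Two disjoint lines of classes $e_2$ and $e_3$ is what a section of $\sE(h_1)$ gives.

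The dimension counts fail for a second reason. The local-to-global sequence gives $\dim\Ext^1(\sI_{Y|X}(2h_i),\sO_X)=h^1(\sO_X(-2h_i))+h^0(\sO_Y)=1+h^0(\sO_Y)$, not $h^0(\sO_Y)$; this is the $\pp^{k_2+k_3}$ in the proof of Theorem \ref{tModuliSectional}. With the correct value, the $h_2$-count for $e_2+e_3$ is honestly $4+1=5$. Over each conic the locally free extensions form an $\mathbb{A}^1\subset\pp^1$, and the map to moduli is injective because $h^0(\sE(h_2))=1$. You instead postulate the missing parameter.

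For $2e_3$ via $h_1$, the projectivized extension space is $\pp^2$ over a $4$-dimensional $H$. Each bundle is reached from the whole $\pp^1=\pp(H^0(\sE(h_1)))$ of zero loci, so the dimension is $4+2-1=5$. Your $4+1$ comes from two compensating errors. Moreover, since the parametrization has one-dimensional fibres, being dominated by a $\pp^1$-bundle over an open subset of $\mathrm{Sym}^2$ of a quadric gives at most unirationality, not rationality.

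Smoothness is not proved either. The identity $\chi(\sE\otimes\sE^\vee)=4-4k$ only gives $h^1-h^2=4k-3$. You still need $\Ext^2(\sE,\sE)=0$ at every point of $MI(e_2+e_3)$ and at a general point of $MI(2e_3)$.

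Finally, $c_2(\sO_X(D)\oplus\sO_X(-D))=-D^2$. So for $c_2=2e_3$ the graded object of a strictly semistable instanton is $\sO_X(h_1-h_2)\oplus\sO_X(h_2-h_1)$. It is not $\sO_X(h_3-h_j)\oplus\sO_X(h_j-h_3)$, which has $c_2=2e_w$ with $w\neq 3$. With the correct index there is only one possible graded object, so the ``exactly one point'' claim is immediate. What you still owe is rationality and (generic) smoothness.
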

In the following table we collect the values $\chi(\sE_i)$ for a charge two instanton bundle. Notice that $\chi(\sE(h_i))=h^0(\sE(h_i))-h^1(\sE(h_i))$ since the vanishing of $h^2$ and $h^3$ follow from the definition of instanton. Thus as soon as $\chi(\sE(h_i))>0$ we get that $\sE$ is a $h_i$-'t Hooft bundle.
\begin{footnotesize}
\begin{table}[H]
\label{TableCharge2}
\centering
\begin{tabular}{|c|c|c|c|}
\hline
\rule[-4mm]{0mm}{1cm}
$c_2(\sE)$ & $\chi(\sE(h_1))$ & $\chi(\sE(h_2))$ & $\chi(\sE(h_3))$ \\
\hline
\rule[-4mm]{0mm}{1cm}
$e_1+e_2$ &  \boxed{1} & \boxed{1} & \boxed{2} \\
\hline
\rule[-4mm]{0mm}{1cm}
$e_2+e_3$ & \boxed{2} & \boxed{1} & \boxed{1} \\
\hline
\rule[-4mm]{0mm}{1cm}
$e_1+e_3$ & \boxed{1} & \boxed{2} & \boxed{1} \\
\hline
\rule[-4mm]{0mm}{1cm}
$2e_2$ & \boxed{2} & 0 & \boxed{2} \\
\hline
\rule[-4mm]{0mm}{1cm}
$2e_3$ & \boxed{2} & \boxed{2} & 0 \\
\hline
\end{tabular}
\caption{The charge $2$ (Ulrich) case}
\end{table}
\end{footnotesize}
We observe that every charge two instanton bundle is $h_i$-'t Hooft for some $i$ and in the first three cases $h^0(\sE(h_i))>0$ for all $i$. We already noticed and characterized this behavior in Proposition \ref{pVerySpecial}. In the minimal charge case, all the definitions of $h_i$-'t Hooft, $(h_i,h_j)$-special and $h_i$-sectional special somehow collapse, giving a unique, generically smooth and irreducible component.

Once we increase the charge, the landscape becomes much richer, allowing different and interesting configurations of the moduli spaces. Let us start by compiling the table of the Euler characteristics in the charge three case.
\begin{footnotesize}
\begin{table}[H]
\label{TableCharge3}
\centering
\begin{tabular}{|c|c|c|c|c|}
\hline
\rule[-4mm]{0mm}{1cm}
 & $c_2(\sE)$ & $\chi(\sE(h_1))$ & $\chi(\sE(h_2))$ & $\chi(\sE(h_3))$ \\
\hline
\rule[-4mm]{0mm}{1cm}
(a) & $e_2+2e_3$ &  \boxed{1} & 0 & -1 \\
\hline
\rule[-4mm]{0mm}{1cm}
(b) & $2e_2+e_3$ & \boxed{1} & -1 & 0 \\
\hline
\rule[-4mm]{0mm}{1cm}
(c) & $3e_2$ & \boxed{1} & -2 & \boxed{1} \\
\hline
\rule[-4mm]{0mm}{1cm}
(d) & $3e_3$ & \boxed{1} & \boxed{1} & -2 \\
\hline
\rule[-4mm]{0mm}{1cm}
(e) & $2e_1+e_2$ & -1 & 0 & \boxed{1} \\
\hline
\rule[-4mm]{0mm}{1cm}
(f) & $2e_1+e_3$ & -1 & \boxed{1} & 0 \\
\hline
\rule[-4mm]{0mm}{1cm}
(g) & $e_1+2e_2$ &  0 & -1 & \boxed{1} \\
\hline
\rule[-4mm]{0mm}{1cm}
(h) & $e_1+2e_3$ &  0 & \boxed{1} & -1 \\
\hline
\rule[-4mm]{0mm}{1cm}
(i) & $e_1+e_2+e_3$ &  0 & 0 & 0 \\
\hline
\end{tabular}
\caption{The charge $3$ case}
\end{table}
\end{footnotesize}
From the table we see that, except for the case $c_2(\sE)=e_1+e_2+e_3$, there always exists an $i$ such that $h^0(\sE(h_i))>0$. We only deal with cases $(a)$, $(d)$ and $(i)$, since by permuting indices we get a description of all the other cases. In all cases, we will make a systematic use of Theorem \ref{tModuliSectional}, \ref{tModuliSpecial}, \ref{tModulitHooft} and \ref{tInstaModuli}.

{\textbf{Case (a)}:}\\
In this case $MI^{h_1}(e_2+2e_3)$ covers the entire moduli space $MI(e_2+2e_3)$ of rank two instanton bundles having $c_2(\sE)=e_2+2e_3$. Since $k_1=0$, Theorem \ref{tModulitHooft} yields that $MI^{h_1}(e_2+2e_3)$ is irreducible and smooth outside the closed locus $MI^{(h_1,h_1)}(e_2+2e_3)$. Geometrically, each $h_1$-'t Hooft bundle corresponds to a curve $Y\in H$ representing $e_2+2e_3$ in $A^2(X)$ whose reduced structure is the disjoint union of lines, thus points in $MI^{(h_1,h_1)}(e_2+2e_3)$ projects to curves $Y$ such that the component $2e_3$ is contained in a quadric $Q \in |h_1|$. In particular $MI^{h_1}(e_2+2e_3)$ has dimension $9$ and it is smooth outside the two codimensional locus $MI^{(h_1,h_1)}(e_2+2e_3)$. Moreover, we can expect from Table \ref{TableCharge3} that not all charge $3$ istanton bundles are $h_2$-'t Hooft or $h_3$-'t Hooft. $MI^{h_2}(e_2+2e_3)$ and $MI^{h_3}(e_2+2e_3)$ are smooth and irreducible of dimensions $8$ and $7$, respectively. They are both contained in $MI^{h_1}(e_2+2e_3)$, thus they actually coincide with $MI^{(h_1,h_2)}(e_2+2e_3)$ and $MI^{(h_1,h_3)}(e_2+2e_3)$. We claim that $MI^{(h_1,h_3)}(e_2+2e_3)$ coincides with $MI^{(h_1,h_1)}(e_2+2e_3)$. Indeed, take $\sE$ a point in $MI^{(h_1,h_3)}(e_2+2e_3)$ and let $Y_{\sE}$ be the correspondent element in $H$. Thanks to Theorem \ref{tSpecial}, $Y_{\sE}$ is contained in a reducible union of two quadrics $Q_1\in |h_1|$ and $Q_3\in |h_3|$. In particular the component representing $2e_3$ is contained in $Q_1$ and the line representing $e_2$ lies in $Q_3$. However, every $e_2$ line is also contained in a quadric $Q'_1$ in the linear system $|h_1|$, thus $Y_{\sE}$ is contained in a reducible $Q_1 \cup Q'_1$. Finally we point out that the space $MI^{(h_2,h_3)}(e_2+2e_3)$ is empty thanks to Theorem \ref{tSpecial}.

\bigskip

\begin{center}
\begin{tikzpicture}
        \def\firstcircle{(0,0) circle (2.6cm)}
        \def\secondcircle{(0.9,0.7) circle (1.1cm)}
        \def\thirdcircle{(-0.8,-1) circle (0.82cm)}

        \colorlet{circle edge}{black!100}
        \colorlet{circle area}{teal!0}
        
        \tikzset{filled/.style={fill=circle area, draw=circle edge, thick},
            outline/.style={draw=circle edge, thick}}
        
        \setlength{\parskip}{5mm}
        \begin{scope}
            \fill[filled] \firstcircle;
            \clip \secondcircle;
            \clip \thirdcircle;
        \end{scope}
        \draw[outline] \firstcircle node {};
        \draw[outline,fill=magenta!0] \secondcircle node {$MI^{(h_2)}$};
        \draw[outline,fill=violet!0] \thirdcircle node {$MI^{(h_3)}$};
        %\draw[outline,fill=cyan!30] \fourthcircle node {$MI^{(h_1,h_1)}$};
        \node[anchor=south] at (current bounding box.north) {${MI(e_2+2e_3)\cong MI^{h_1}(e_2+2e_3)}$};
    \end{tikzpicture}
\end{center}

{\textbf{Case (d)}:}\\
In this case both $MI^{h_1}(3e_3)$ and $MI^{h_2}(3e_3)$ cover the whole moduli space $MI(3e_3)$ of rank two instanton bundles having $c_2(\sE)=3e_3$. In other word each instanton bundle corresponding to a point in $MI(3e_3)$ is $(h_1,h_2)$ special. Geometrically this is equivalent to the fact that every curve $Y$ in $H$ representing $3e_3$ is always contained in a quartic surface $S \in |h_1+h_2|$. To see this, consider the $\pi_{12}$ projection of $X$ to the quadric $Q:=\pp(V_1) \times \pp(V_2)$. As usual, let us denote by $\ell$ and $m$ the generators of $\Pic(Q)$. Then $S$ is the pullback via $\pi_{12}$ of a conic on $Q$, namely a divisor $\ell+m$. The curve $Y$ project to $Q$ to a $0$-dimensional scheme $Z$ of length three. But now, given $Z$ we can always find a conic passing through $Z$, which correspond on $X$ to a surface $S$ containing $Y$. Thus $MI(3e_3)$ is irreducible and it is smooth outside the two codimensional locus $MI^{(h_1,h_1)}(3e_3) \cup  MI^{(h_2,h_2)}(3e_3)$. We claim that $MI^{(h_1,h_1)}(3e_3) \cap  MI^{(h_2,h_2)}(3e_3)=\emptyset$. Notice that this is a consequence of Proposition \ref{pDoubleSpecial}, nevertheless we highlight here the geometric picture. Consider $Y_1=(s_1)_0$ with $s_1 \in H^0(\sE(h_1))$. Suppose $\sE\in MI^{(h_1,h_1)}(3e_3)$ and let $Z_1$ be the projection of $Y_1$ via $\pi_{12}$. We deal with the case of $Z_1$ being reduced. Corollary \ref{cQuarticSectional} implies that $Z_1$ is contained in the union of two fibers of $Q$, say $|2\ell|$. In particular, two of the three points of $Z_1$ are contained in one line $L$ and the other belongs to a different disjoint line $L'$. These two lines correspond to two quadric surfaces in $|h_1|$. Now Sequence \eqref{eSpecial12} implies that $\sE$ is $h_2$-sectional special if and only if $h^0(\sI_{Y_1}(h_1+h_2))=2$. As we previously noted, this is equivalent to require that a $\pp^1$ of conics in $Q$ passes through $Z_1$. However each smooth conic $C$ intersects $L$ in just one point, thus $Z_1\not\subset C$. Moreover the only singular conic containing $Z_1$ is the union of $L$ and $M$ where $M$ is the unique line in $|m|$ passing through the point of $Z_1$ contained in $L'$. Thus $h^0(\sI_{Y_1}(h_1+h_2))=1$ and $\sE$ is not $h_2$-special.

Finally, notice that in case $(d)$ we obtain that $MI^{h_3}(3e_3)$ is empty, since for any curve $Y$ representing $3e_3$ we have $\omega_Y \not \cong \sO_Y(-2h_1-2h_2)$.

\bigskip

\begin{center}
\begin{tikzpicture}
        \def\firstcircle{(0,0) circle (2.6cm)}
        \def\secondcircle{(0,1.2) circle (0.9cm)}
        \def\thirdcircle{(0,-1.2) circle (0.9cm)}

        \colorlet{circle edge}{black!100}
        \colorlet{circle area}{teal!0}
        
        \tikzset{filled/.style={fill=circle area, draw=circle edge, thick},
            outline/.style={draw=circle edge, thick}}
        
        \setlength{\parskip}{5mm}
        \begin{scope}
            \fill[filled] \firstcircle;
            \clip \secondcircle;
            \clip \thirdcircle;
        \end{scope}
        \draw[outline] \firstcircle node {};
        \draw[outline,fill=yellow!0] \secondcircle node {$MI^{(h_1,h_1)}$};
        \draw[outline,fill=yellow!0] \thirdcircle node {$MI^{(h_2,h_2)}$};
        %\draw[outline,fill=red!20] \fourthcircle node {$MI^{(h_1,h_3)}$};
        \node[anchor=south] at (current bounding box.north) {${MI(3e_3)\cong MI^{h_1}(3e_3) \cong MI^{h_2}(3e_3)}$};
    \end{tikzpicture}
\end{center}

{\textbf{Case (i)}:}\\
This is the only case were we cannot expect any moduli space of $(h_i)$-'t Hooft bundles to cover the whole moduli space $MI(e_1+e_2+e_3)$. Indeed there exists in $MI(e_1+e_2+e_3)$ a generically smooth, irreducible component $\hat{MI}(e_1+e_2+e_3)$ of dimension $9$. The loci $MI^{h_i}(e_1+e_2+e_3)$ have codimension one in $MI(e_1+e_2+e_3)$. Each $MI^{h_i}(e_1+e_2+e_3)$ is smooth (i.e. there are no $h_i$-sectional special instantons) and consists of two irreducible components. The moduli spaces $MI^{(h_i,h_j)}(e_1+e_2+e_3)$ are smooth and irreducible of dimension $8$ and their (disjoint) union actually coincide with $\bigsqcup_{i}MI^{h_i}(e_1+e_2+e_3)$, i.e. each irreducible component of $MI^{h_i}(e_1+e_2+e_3)$ consists either of $(h_i,h_j)$-special or $(h_i,h_w)$-special instanton bundles, with $i \neq j \neq w \neq i$. In other words we have $MI^{h_i}(e_1+e_2+e_3)=MI^{(h_i,h_j)}(e_1+e_2+e_3)\sqcup MI^{(h_i,h_w)}(e_1+e_2+e_3)$.

\bigskip

\begin{center}
\begin{tikzpicture}
        \def\firstcircle{(0,0) circle (2.6cm)}
        \def\secondcircle{(1,-1) circle (0.9cm)}
        \def\thirdcircle{(-1,-1) circle (0.9cm)}
        \def\fourthcircle{(0,1) circle (0.9cm)}
        
        \colorlet{circle edge}{black!100}
        \colorlet{circle area}{teal!0}
        
        \tikzset{filled/.style={fill=circle area, draw=circle edge, thick},
            outline/.style={draw=circle edge, thick}}
        
        \setlength{\parskip}{5mm}
        \begin{scope}
            \fill[filled] \firstcircle;
            \clip \secondcircle;
            \clip \thirdcircle;
        \end{scope}
        \draw[outline] \firstcircle node {};
        \draw[outline,fill=red!0] \secondcircle node {$MI^{(h_1,h_2)}$};
        \draw[outline,fill=red!0] \thirdcircle node {$MI^{(h_2,h_3)}$};
        \draw[outline,fill=red!0] \fourthcircle node {$MI^{(h_1,h_3)}$};
        \node[anchor=south] at (current bounding box.north) {${\hat{MI}(e_1+e_2+e_3)}$};
    \end{tikzpicture}
\end{center}
\begin{remark}
We conclude the work by remarking that the charge $3$ case is the last one where the two notions of speciality are strongly related and coincide in some cases. As soon as $k\ge 4$, in light of Proposition \ref{pDoubleSpecial} and Remark \ref{rDoubleSpecial}, the moduli spaces of special and sectional special bundles never coincide, even though it is possible for them to have non-empty intersection.
\end{remark}
\bibliographystyle{amsplain}

\begin{thebibliography}{99}
\bibitem{AnCa}
V. Antonelli, G. Casnati: \emph{Instanton sheaves on projective schemes}, J. Pure Appl. Alg. \textbf{227} (2023).

\bibitem{AnMa2}
V. Antonelli, F. Malaspina: {\em $H$--instanton bundles on three-dimensional polarized projective varieties}, J. Algebra \textbf{598}, 570--607 (2022).

\bibitem{AnMa}
V. Antonelli, F. Malaspina: {\em Instanton bundles on the Segre threefold with Picard number three}, Math. Nach. \textbf{293}, 1026--1043 (2020).

\bibitem{AMMP}
V. Antonelli, F. Malaspina, S. Marchesi, J. Pons-Llopis: \emph{'t Hooft bundles on the complete flag threefold and moduli spaces of instantons}, J. Math. Pures Appl. \textbf{202} (2025).

\bibitem{Ar}
E. Arrondo: \emph{A home--made Hartshorne--Serre correspondence}. Rev Mat. Complut.  \textbf{ 20}, 423--443 \rm (2007).  

\bibitem{ADHM}
M. F. Atiyah, V.G. Drinfeld, N. J. Hitchin, Yu. I Manin: \emph{Construction of instantons}, Phys. Lett. A, \textbf{65}, 185--187 (1978).

\bibitem{AW}
M.F. Atiyah, R. S. Ward: \emph{Instantons and Algebraic Geometry}. Commun. math. Phys., \textbf{55}, 117--124 (1977).

\bibitem{BF}
C. Banica, O. Forster: \emph{Multiplicity Structures on Space Curves}, Algebraic Geometry, Proc. Lefschetz Centen. Conf., Mexico City/ Mex. 1984, Part I, Contemp. Math., \textbf{58}, 47--64 (1986).

\bibitem{BeFr}
V. Beorchia, D. Franco: \emph{On the moduli space of ’t Hooft bundles}, Ann. Univ. Ferrara, \textbf{47}, 253--268 (2001).

\bibitem{BT}
W. B\"ohmer, G. Trautmann: \emph{Special instanton bundles and Poncelét curves}, Proc. Lambrecht Conf., 325–336, (Lect. Notes Math., Vol. 1273). Berlin Heidelberg New York: Springer (1987)

%\bibitem{CFM}
%G. Casnati, D. Faenzi, F. Malaspina: \emph{Rank two aCM bundles on the del Pezzo threefold with Picard number 3}, J. Algebra, \textbf{429}, 413--446 (2015).

\bibitem{CFM2}
G. Casnati, D. Faenzi, F. Malaspina: \emph{Moduli spaces of rank two aCM bundles on the Segre product of three projective lines}, J. Pure Appl. Algebra, \textbf{220}, 1554--1575 (2016).

%\bibitem{CJMM}
%G. Comaschi, M. Jardim, C. Martinez, D. Mu: \emph{Instantons: the next frontier}, S\~{a}o Paulo J. Math. Sci. (2023).

\bibitem{CMRP}
L. Costa, R. M. Mir\'o-Roig, J. Pons-Llopis: \emph{Ulrich bundles}, De Gruyter Studies in Mathematics, \textbf{77}, De Gruyter
(2021).

\bibitem{CO}
L. Costa, G. Ottaviani: \emph{Nondegenerate multidimensional matrices and instanton bundles}, Trans. Amer. Math. Soc., \textbf{355} (1), 49--55 (2003).

\bibitem{E-S-W}
D. Eisenbud, F.O. Schreyer, J. Weyman: \emph{Resultants and Chow forms via exterior syzigies}.  J. Amer. Math. Soc. \textbf{16}, 537--579 (2003).

\bibitem{Fa2}
D. Faenzi: \emph{Even and odd instanton bundles on Fano threefolds of Picard number one},  Manuscripta Math., \textbf{144}, 199--239 (2014).

\bibitem{Har}
R. Hartshorne: {\em Algebraic geometry}. G.T.M. 52, Springer \rm (1977).

\bibitem{Har3}
R. Hartshorne: {\em Deformation Theory}. G.T.M. 257, Springer \rm (2010).

\bibitem{Ha2}
R. Hartshorne: \emph{Stable vector bundles of rank $2$ on $\mathbb{P}^3$}, Math. Ann., \textbf{238}, 229--280 (1978).

\bibitem{HN}
A. Hirschowitz, M. S. Narasimhan: \emph{Fibr\'es de 't Hooft Sp\'eciaux et Applications}, in: Le Barz, P., Hervier, Y. (eds) Enumerative Geometry and Classical Algebraic Geometry. Progress in Mathematics, \textbf{24}. Birkh\"auser Boston (1982).

\bibitem{Hooft}
G. 't Hooft: \emph{Computation of the quantum effects due to a four-dimensional pseudoparticle}, Phys. Rev. D, \textbf{14} (12), 3432--3450 (1976)

\bibitem{JMPS}
M. Jardim, G. Menet, D. Prata, H. S\'a Earp: \emph{Holomorphic bundles for higher dimensional gauge theory}, Bull. London Math. Soc., \textbf{49}, 117--132 (2017).

\bibitem{JV}
M. Jardim, M. Verbitsky: \emph{Trihyperk\"{a}hler reduction and instanton bundles on $\mathbb{C}\pp^3$}, Compositio Mathematica, \textbf{150} (11), 1836--1868 (2014).

\bibitem{Kuz}
A. Kuznetsov: \emph{Instanton bundles on {Fano} threefolds}, Central European Journal of Mathematics, \textbf{4}, 1198--1231 (2012).

\bibitem{Ma}
N. Manolache: \emph{Multiple structure on smooth support}, Math. Nach. \textbf{167}, 157--202 (1994).

\bibitem{MMP}
F. Malaspina, S. Marchesi, J. Pons-Llopis: {\em Instanton bundles on the flag variety $F(0,1,2)$}.  Ann. Sc. Norm. Super. Pisa Cl. Sci. (5) \textbf{20} (2020), 1469--1505.

%\bibitem{O-S-S}
%C. Okonek, M. Schneider, H. Spindler: {\em Vector bundles on complex projective spaces}, Progress in Mathematics 3, Birkh\"auser \rm(1980).

\bibitem{T1}
A. S. Tikhomirov: \emph{Moduli of mathematical instanton vector bundles with odd c2 on projective space},
Izv. Math., \textbf{76} (5), 143--224 (2012).

\bibitem{T2}
A. S. Tikhomirov: \emph{Moduli of mathematical instanton vector bundles with even c2 on projective space},
Izv. Math., \textbf{77} (6), 1195--1223 (2013).

\end{thebibliography}

\end{document}